\newcommand{\A}{\mathbb A}
\newcommand{\N}{\mathbb N}
\newcommand{\PP}{\mathbb P}
\newcommand{\Q}{\mathbb Q}
\newcommand{\R}{\mathbb R}
\newcommand{\Z}{\mathbb Z}
\newcommand{\T}{\mathbb T}
\newcommand{\kbar}{\overline{k}}
\DeclareMathOperator{\Aut}{Aut}
\DeclareMathOperator{\conv}{conv}
\DeclareMathOperator{\inter}{int}
\DeclareMathOperator{\opchar}{char}
\DeclareMathOperator{\Proj}{Proj}
\DeclareMathOperator{\res}{res}
\DeclareMathOperator{\Spec}{Spec}
\DeclareMathOperator{\supp}{supp}
\DeclareMathOperator{\vol}{vol}
\newenvironment{enumalph}
{\begin{enumerate}}
{\end{enumerate}}
\newenvironment{enumroman}
{\begin{enumerate}}
{\end{enumerate}}
\newcommand{\calH}{\mathcal{H}}
\newcommand{\calM}{\mathcal{M}}
\newcommand{\calP}{\mathcal{P}}
\newcommand{\Mgnd}{\mathcal{M}_g^{\textup{nd}}}
\theoremstyle{plain}
\newtheorem{thm}{Theorem}[section]
\newtheorem{prop}[thm]{Proposition}
\newtheorem{lem}[thm]{Lemma}
\newtheorem{cor}[thm]{Corollary}
\newtheorem{defn}[thm]{Definition}
\newtheorem*{thmnostar}{Theorem}
\theoremstyle{remark}
\newtheorem{rmk}[thm]{Remark}
\newtheorem{exa}[thm]{Example}
\newtheorem{exm}[thm]{Example}
\newtheorem*{rmknostar}{Remark}
\newtheorem*{conven}{Conventions and notations}
\begin{document}

\title{On nondegeneracy of curves}
\author{Wouter Castryck}
\date{9 April 2008}
\address{Katholieke Universiteit Leuven, Departement Elektrotechniek (ESAT), Afdeling SCD -- COSIC, Kasteelpark Arenberg 10, B-3001 Leuven (Heverlee), Belgium}
\email{wouter.castryck@esat.kuleuven.be}
\author{John Voight}
\address{Department of Mathematics and Statistics, University of Vermont, 16 Colchester Ave, Burlington, VT 05401, USA}
\email{jvoight@gmail.com}

\begin{abstract}
We study the conditions under which an algebraic curve can be modelled
by a Laurent polynomial that is nondegenerate with respect to its Newton
polytope.
We prove that every curve of
genus $g \leq 4$ over an algebraically
closed field is nondegenerate in the above sense.  More generally, let $\Mgnd$ be the locus
of nondegenerate curves inside the moduli space of curves of genus $g \geq 2$. Then we show that $\dim
\Mgnd=\min(2g+1,3g-3)$, except for $g=7$ where $\dim
\calM_7^{\textup{nd}} = 16$; thus, a generic curve of genus $g$ is
nondegenerate if and only if $g \leq 4$.

\noindent \emph{Subject classification:} 14M25, 14H10
\end{abstract}

\maketitle

Let $k$ be a perfect field with algebraic closure $\kbar$. Let $f
\in k[x^{\pm 1}, y^{\pm 1}]$ be an irreducible Laurent polynomial,
and write $f=\sum_{(i,j) \in\Z^2} c_{ij} x^iy^j$. We denote by
$\supp(f)=\{(i,j) \in \Z^2:c_{ij} \neq 0\}$ the support of $f$,
and we associate to $f$ its Newton polytope $\Delta=\Delta(f)$,
the convex hull of $\supp(f)$ in $\R^2$. We assume throughout
that $\Delta$ is $2$-dimensional. For a face $\tau \subset
\Delta$, let $f|_\tau = \sum_{(i,j) \in \tau} c_{ij} x^iy^j$.
We say that $f$ is \emph{nondegenerate} if, for every face $\tau
\subset \Delta$ (of any dimension), the system of equations
\begin{equation} \label{nondeg}
f|_\tau=x\frac{\partial f|_\tau}{\partial x} = y\frac{\partial
f|_\tau}{\partial y} = 0
\end{equation}
has no solutions in $\kbar^{*2}$.

From the perspective of toric varieties, the condition of
nondegeneracy can be rephrased as follows. The Laurent polynomial
$f$ defines a curve $U(f)$ in the torus $\T^2_k = \Spec k[x^{\pm
1}, y^{\pm 1}]$, and $\T^2_k$ embeds canonically in the
projective toric surface $X(\Delta)_k$ associated to $\Delta$ over
$k$. Let $V(f)$ be the Zariski closure of the curve $U(f)$ inside
$X(\Delta)_k$. Then $f$ is nondegenerate if and only if for every
face $\tau \subset \Delta$, we have that
$V(f) \cap \T_\tau$ is smooth of codimension $1$ in
$\T_\tau$, where $\T_\tau$ is
the toric component of $X(\Delta)_k$ associated to $\tau$.
(See Proposition~\ref{nondegen-equiv} for alternative
characterizations.)

Nondegenerate polynomials have become popular objects in explicit
algebraic geometry, owing to their connection with toric geometry
\cite{BatyrevCox}: a wealth of geometric information about $V(f)$ is
contained in the combinatorics of the Newton polytope $\Delta(f)$.
The notion was initially employed by Kouchnirenko
\cite{Kouchnirenko}, who studied nondegenerate polynomials in the
context of singularity theory.  Nondenegerate polynomials emerge
naturally in the theory of sparse resultants \cite{GKZ} and admit a
linear effective Nullstellensatz \cite[Section
2.3]{CastryckDenefVercauteren}. They make an appearance in the study
of real algebraic curves in maximal position \cite{Mikhalkin} and in
the problem of enumerating curves through a set of prescribed points
\cite{Mikhalkin2}.
In the case where $k$ is a finite field, they arise in the
construction of curves with many points
\cite{BeelenPellikaan,KreschWetherell}, in the $p$-adic cohomology
theory of Adolphson and Sperber \cite{AS}, and in explicit methods
for computing zeta functions of varieties over $k$
\cite{CastryckDenefVercauteren}. Despite their utility and seeming
ubiquity, the \emph{intrinsic} property of nondegeneracy has not
seen detailed study, with the exception of the Ph.D. thesis of
Koelman \cite{Koelman} from 1991, otherwise unpublished (see
Section~\ref{morecurves} below).

We are therefore led to the central problem of this article:
\emph{Which curves are nondegenerate?}
To the extent that toric varieties
are generalizations of projective space, this question asks us to
generalize the characterization of nonsingular plane curves amongst
all curves.
An immediate provocation for this
question was to understand the locus of curves to which the point counting algorithm of Castryck--Denef--Vercauteren \cite{CastryckDenefVercauteren} actually applies. Our results
are collected in two parts.

In the first part, comprising
Sections~\ref{sectiontechnical}--\ref{sectionsummary}, we
investigate the nondegeneracy of some interesting classes of curves (hyperelliptic, $C_{ab}$, and
low genus curves). Our conclusions can be summarized as follows.

\begin{thmnostar}
Let $V$ be a curve of genus $g$ over a perfect field $k$. Suppose
that one of the following conditions holds:
\begin{enumroman}
\item $g=0$;
\item $g=1$ and $V(k) \neq \emptyset$;
\item $g=2,3$, and either $17 \leq \# k < \infty$, or $\#k = \infty$ and $V(k) \neq \emptyset$;
\item $g=4$ and $k = \kbar$.
\end{enumroman}
Then $V$ is nondegenerate.
\end{thmnostar}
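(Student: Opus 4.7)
The plan is to treat each case (i)--(iv) separately, in each instance exhibiting an explicit Laurent polynomial $f \in k[x^{\pm 1}, y^{\pm 1}]$ such that $V$ is birational to the curve $V(f)$ and the nondegeneracy condition \eqref{nondeg} can be checked on the finitely many faces of $\Delta(f)$. The uniform strategy is: first identify a natural birational model of $V$ coming from a classical construction (Weierstrass model, hyperelliptic form, plane quartic, canonical model), then re-interpret that model as a Laurent polynomial with a convenient Newton polytope (triangle, trapezoid, square), and finally verify the vertex, edge, and $2$-dimensional face conditions directly.

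For (i), a smooth projective genus zero curve is either $\PP^1_k$ (in which case we take $f$ linear with triangular Newton polytope) or a nonsplit conic (in which case we choose $f$ a homogeneous quadratic form with triangular Newton polytope of size $2$, using that such a conic has no singular points). For (ii), a Weierstrass model $f = y^2 + a_1xy + a_3 y - x^3 - a_2x^2 - a_4 x - a_6$ with $a_6 \neq 0$ works: this last condition can be arranged by a translation of $x$ using the rational point, and then nondegeneracy on the triangle with vertices $(0,0),(3,0),(0,2)$ reduces to the smoothness of the elliptic curve. For (iii), every genus $2$ curve and every hyperelliptic genus $3$ curve admits a model $y^2 + h(x)y = f(x)$ on a trapezoidal (or rectangular) Newton polytope, and the face conditions translate into the discriminant being nonzero together with nonvanishing of the leading and constant coefficients of $f$; both normalizations require either a rational point or a sufficiently large number of $k$-points of $\PP^1$ to dodge the bad values, explaining the hypotheses on $\# k$ or on $V(k)$. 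For non-hyperelliptic $V$ of genus $3$, use the canonical plane quartic embedding, which is already a polynomial with Newton polytope the standard $4$-simplex, and reduce nondegeneracy to smoothness of $V$ together with transversality with the coordinate axes (again arranged by a generic change of coordinates).

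For (iv) the work is heavier and this is where I expect the main obstacle. A hyperelliptic genus $4$ curve is handled as in (iii). A non-hyperelliptic curve of genus $4$ is canonically embedded as the intersection of a quadric $Q$ and a cubic in $\PP^3_{\kbar}$. Over $\kbar = k$ there are two subcases according to the rank of $Q$. If $Q$ is smooth then $Q \cong \PP^1 \times \PP^1$ and $V$ is a curve of bidegree $(3,3)$, giving a bihomogeneous polynomial whose Newton polytope is the $3 \times 3$ square; nondegeneracy on the four edges follows from smoothness of $V$ at the four torus-fixed points of $\PP^1 \times \PP^1$, which after a linear change of coordinates (available since $k=\kbar$) we may assume. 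If $Q$ has rank $3$, it is a quadric cone, which is a (singular) toric surface corresponding to a triangle $\Delta$ with one lattice point at distance $2$ from the opposite edge; $V$ cuts out a suitable polynomial on this toric model, and the face verifications reduce to checking that $V$ avoids the cone point and meets the ruling transversally, both of which can be achieved by an $\Aut(Q)$-action applied in characteristic-free fashion since $k=\kbar$.

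The main difficulty is therefore the cone subcase of (iv): the underlying toric surface is singular, so nondegeneracy on the $1$-dimensional face corresponding to the apex is a non-trivial condition and cannot be freed up by a naive coordinate change. The argument needs the full algebraic closedness of $k$ in order to exhibit a suitable automorphism of the cone moving $V$ into good position; this is in sharp contrast with the hyperelliptic subcase and explains why the hypothesis $k = \kbar$ is imposed only at $g = 4$. Throughout, the common template is that each ``boundary'' condition on the faces of $\Delta$ becomes a geometric non-incidence condition at the toric boundary of $X(\Delta)$, and these can be secured by moving $V$ via a large enough automorphism group provided $k$ has enough elements.
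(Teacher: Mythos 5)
Your overall template---classical model on a small polytope, then a generic coordinate change to secure the boundary conditions---is exactly the paper's strategy (its Lemmas~\ref{translating} and~\ref{fieldlowerbound} are the counting device you gesture at), and your treatment of the hyperboloidal genus-$4$ case via the full $3\times 3$ square on $\PP^1_k\times\PP^1_k$ is a legitimate alternative to the paper's projection to a $2$-nodal plane quintic (polytopes (h.1)--(h.2)): each ruling has only finitely many tangent lines, so over $\kbar$ two generic lines per ruling do the job and no case split on tangency at a node is needed. However, there are genuine gaps. The most serious is in (ii): you assert that nondegeneracy of the Weierstrass model on $\conv(\{(0,0),(3,0),(0,2)\})$ ``reduces to the smoothness of the elliptic curve.'' It does not. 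The discriminant of the facet on the $X$-axis requires $p(x)=x^3+a_2x^2+a_4x+a_6$ to have no repeated root in $\kbar^{*}$, and for a general equation $y^2+a_1xy+a_3y=p(x)$ this is independent of smoothness: $y^2+y=(x-1)^2(x-c)$ is a smooth cubic for generic $c$ but is tangent to $y=0$ at $(1,0)$, so the model is degenerate. One must complete the square when $\opchar k\neq 2$ and, in characteristic $2$, run a case analysis on the multiple roots of $p$ as the paper does; moreover your normalization $a_6\neq 0$ by translating $x$ can fail over $\F_2$ and $\F_3$, while (ii) is claimed for every perfect field. A second gap is the genus-$3$ dichotomy: ``hyperelliptic versus plane quartic'' is not exhaustive over a general $k$, since a curve can be geometrically hyperelliptic but not hyperelliptic (its canonical image is a pointless conic), in which case it is neither a plane quartic nor nondegenerate (Lemma~\ref{geometricallyhyperellipticbutnotnondegenerate}). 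The hypotheses ``$k$ finite or $V(k)\neq\emptyset$'' exist precisely to exclude this case, a role your write-up never assigns to them.

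A smaller correction: you locate the need for $k=\kbar$ in (iv) at the quadric cone, but over $\kbar$ that subcase is the easy one---the vertex is automatically avoided because $V=Q\cap C$ is smooth, the relevant face of the triangle $\conv(\{(0,0),(6,0),(0,3)\})$ is a vertex rather than an edge, and Lemma~\ref{fieldlowerbound} applies directly. The genuine obstruction over non-closed fields is the smooth quadric with conjugate rulings, $Q\not\cong\PP^1_k\times\PP^1_k$ (and the cone with $Q(k)=\{T\}$), for which no polytope with the required interior lattice configuration exists.
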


\begin{rmknostar}
\noindent The condition $\#k \geq 17$ in (iii)
ensures that $k$ is large enough to allow nontangency to the toric
boundary of $X(\Delta)_k$, but is most likely not sharp; see
Remark~\ref{project}.
\end{rmknostar}

In the second part, consisting of
Sections~\ref{SectionBounds}--\ref{morecurves}, we restrict to
algebraically closed fields $k = \kbar$ and consider the locus
$\Mgnd$ of nondegenerate curves inside the coarse moduli space of
all curves of genus $g \geq 2$. We prove the following theorem.

\begin{thmnostar}
We have $\dim \Mgnd = \min(2g + 1,3g-3)$, except for $g=7$ where
$\dim \mathcal{M}_7^{\textup{nd}}=16$.  In particular, a generic
curve of genus $g$ is nondegenerate if and only if $g \leq 4$.
\end{thmnostar}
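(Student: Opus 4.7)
The strategy is to decompose $\Mgnd$ according to the Newton polytope and reduce the dimension computation to a combinatorial count over lattice polygons. By Khovanskii's genus formula, any nondegenerate curve $V(f)$ has Newton polytope $\Delta(f)$ containing exactly $g$ interior lattice points; since there are only finitely many such polygons up to $AGL_2(\Z)$-equivalence, we obtain a finite decomposition $\Mgnd = \bigcup_\Delta M_\Delta$, where $M_\Delta \subset \Mg$ parametrizes the isomorphism classes of nondegenerate $V(f)$ with $\Delta(f) = \Delta$. Hence $\dim \Mgnd = \max_\Delta \dim M_\Delta$.

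For the upper bound, I would estimate
\[
\dim M_\Delta \le \#(\Delta \cap \Z^2) - 1 - \dim \Aut^\circ X(\Delta),
\]
where the three terms account respectively for the coefficients of $f$, the rescaling $f \mapsto \lambda f$, and the action of the identity component of the automorphism group of the ambient toric surface $X(\Delta)$. Via Demazure's structure theorem, $\dim \Aut^\circ X(\Delta)$ decomposes as $2$ plus a count of ``Demazure roots'' associated to certain edge lattice points of $\Delta$, turning the above into a purely combinatorial inequality in terms of interior points, edge structure, and vertex data. Combining this with Scott's inequality (and its refinements for polygons with many boundary points), and performing a case analysis on the shape of $\Delta$, I would establish $\dim M_\Delta \le 2g + 1$ for every polygon when $g \ge 5$, $g \ne 7$; that for $g = 7$ a single polygon saturates the bound at $16$; and that for $g \le 4$ the first theorem of the paper already gives $\Mgnd = \Mg$ of dimension $3g - 3$.

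For the matching lower bound, I would exhibit an explicit polygon realizing each target dimension: for $g \le 4$, any polygon provided by the theorem of the first part; for $g \ge 5$ with $g \ne 7$, a ``trigonal trapezoid'' of height three, yielding a nondegenerate family whose image in $\Mg$ is the trigonal locus of dimension $2g+1$; and at $g = 7$, the distinguished polygon isolated in the upper-bound analysis. Nondegeneracy is a Zariski-open condition on the coefficients in $f$, with nonemptiness verified by a Bertini-type argument on the smooth locus of the toric boundary, so each family attains its expected dimension.

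The principal obstacle will be the combinatorial analysis of lattice polygons with $g$ interior lattice points: isolating the unique polygon that exceeds the $2g+1$ bound precisely at $g = 7$ requires a delicate enumeration, likely refining or building on the classification from Koelman's thesis, combined with vigilance over polygons whose toric surfaces carry abnormally large automorphism groups (which can simultaneously raise $\#(\Delta \cap \Z^2)$ and $\dim \Aut^\circ X(\Delta)$). A secondary technical point is to ensure the fibers of the natural map from the parameter space of nondegenerate $f$'s to $M_\Delta$ coincide generically with $\Aut^\circ X(\Delta)$-orbits, and not larger subsets arising from exotic birational identifications between toric models; any slack here would weaken the dimension bound and might mask further exceptional polygons.
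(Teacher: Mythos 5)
Your overall strategy is the paper's: decompose $\Mgnd$ by Newton polytope, bound $\dim \calM_\Delta$ by $\#(\Delta\cap\Z^2)-1-\dim\Aut(X(\Delta)_k)$ with the automorphism group computed combinatorially (your Demazure roots are the paper's column vectors, via Bruns--Gubeladze), refine Scott's inequality to absorb that count, and realize the lower bound by height-three trapezoids (the trigonal locus) plus the exceptional hexagon at $g=7$. One warning on the upper bound: the refined inequality $r-r^{(1)}-c(\Delta)\le 6$, which is what the Poonen--Rodriguez-Villegas ``number 12'' theorem buys you, holds only for \emph{maximal} polytopes; removing boundary points can destroy column vectors faster than lattice points, so the nonmaximal case needs a separate reduction (in the paper, a translation $y\mapsto y+\lambda$ recovering a nondegenerate model on the maximal hull, or alternatively a relaxed nondegeneracy condition). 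Your ``case analysis on the shape of $\Delta$'' glosses over exactly the place where the $g=7$ exception could be missed or spuriously multiplied.

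The genuine gap is in the lower bound. Your Bertini/openness argument shows only that the parameter space of nondegenerate polynomials supported on $\Delta$ is nonempty of dimension $\#(\Delta\cap\Z^2)-1$; it says nothing about the dimension of its \emph{image} in $\calM_g$. To conclude $\dim\calM_\Delta=\#(\Delta\cap\Z^2)-1-\dim\Aut(X(\Delta)_k)$ you must show that the generic fiber of $M_\Delta\to\calM_g$ is a single $\Aut(X(\Delta)_k)$-orbit --- the point you defer as a ``secondary technical point.'' It is not secondary: it is the main non-combinatorial input (Koelman's theorem), and it is where slack would kill the \emph{lower} bound, not the upper one as you suggest (larger fibers only shrink the image). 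For the trapezoids you chose it can be proved directly: a $\Delta$-nondegenerate curve there is trigonal and canonically embedded in the scroll $X(\Delta^{(1)})_k$, Petri's theorem identifies that scroll as the intersection of the quadrics through the canonical curve, and hence any abstract isomorphism between two such curves is induced by an automorphism of the toric surface. Without this step (or a citation of Koelman for the general maximal case, needed at $g=7$), the equality asserted in the theorem is not established.
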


\noindent Our methods combine ideas of Bruns--Gubeladze
\cite{BrunsGubeladze} and Haase--Schicho \cite{HaaseSchicho} and
are purely combinatorial---only the
universal property of the coarse moduli space is used.


\begin{conven}
Throughout, $\Delta \subset \R^2$ will denote a polytope with
$\dim \Delta=2$.
The coordinate functions on the ambient space $\mathbb{R}^2$ will
be denoted by $X$ and $Y$.
A \emph{facet} or \emph{edge} of a polytope is a face of dimension
$1$. A \emph{lattice polytope} is a polytope with vertices in $\mathbb{Z}^2$.
Two lattice polytopes $\Delta$ and $\Delta'$ are
\emph{equivalent} if there is an affine map
\begin{align*}
\varphi: \R^2 &\to \R^2 \\
v & \mapsto Av + b
\end{align*}
such that $\varphi(\Delta)=\Delta'$ with $A \in GL_2(\mathbb{Z})$
and $b \in \mathbb{Z}^2$.
Two Laurent polynomials $f$ and $f'$ are \emph{equivalent} if $f'$ can be obtained
from $f$ by applying such a map to the exponent vectors. Note that equivalence preserves
nondegeneracy.
For a polytope $\Delta \subset \R^2$, we let $\inter(\Delta)$
denote the interior of $\Delta$. We denote the standard
$2$-simplex in $\mathbb{R}^2$ by
$\Sigma=\conv(\{(0,0),(1,0),(0,1)\})$.
\end{conven}

\section{Nondegenerate Laurent polynomials} \label{sectiondefnondeg}

In this section, we review the geometry of
nondegenerate Laurent polynomials. We retain the notation used in the
introduction: in particular, $k$ is a perfect field, $f = \sum c_{ij} x^iy^j \in
k[x^{\pm 1}, y^{\pm 1}]$ is an irreducible Laurent polynomial, and $\Delta$ is
its Newton polytope. Our main implicit reference on toric
varieties is Fulton \cite{Fulton}.

Let $k[\Delta]$ denote the graded semigroup algebra over
$k$ generated in degree $d$ by the monomials that are supported in $d\Delta$, i.e.
\[ k[\Delta] = \bigoplus_{d=0}^\infty \langle x^iy^jt^d \, | \, (i,j) \in (d \Delta \cap \mathbb{Z}^2) \rangle_k.\]
Then
$X=X(\Delta)_k=\Proj k[\Delta]$ is the projective toric surface
associated to $\Delta$ over $k$. This surface naturally decomposes into toric components as
\[ X = \bigsqcup_{\tau \subset \Delta} \mathbb{T}_\tau, \]
where $\tau$ ranges over the faces of $\Delta$ and $\mathbb{T}_\tau \cong \mathbb{T}_k^{\dim \tau}$.
The surface $X$ is nonsingular except possibly at the zero-dimensional toric components associated to the
vertices of $\Delta$. The Laurent polynomial $f$ defines a curve in $\mathbb{T}^2_k \cong \mathbb{T}_\Delta \subset X$,
and we denote by $V=V(f)$ its closure in $X$.  Alternatively, if we denote $A=\Delta \cap \Z^2$, then $X$ can be canonically embedded in $\PP^{\#A - 1}_k = \text{Proj} \, k[t_{ij}]_{(i,j) \in A}$, and $V$ is the hyperplane section $\sum c_{ij}t_{ij} = 0$ of $X$.

We abbreviate $\partial_x=x\displaystyle{\frac{\partial}{\partial x}}$ and $\partial_y=y\displaystyle{\frac{\partial}{\partial y}}$.

\begin{defn} \label{nondegen-def}
The Laurent polynomial $f$ is \emph{nondegenerate}
if for each face $\tau \subset \Delta$, the system
\[ f|_\tau = \partial_x f|_\tau = \partial_y f|_\tau = 0 \]
has no solution in $\kbar^{*2}$.
\end{defn}

We will sometimes write that $f$ is $\Delta$-nondegenerate to emphasize that $\Delta(f) = \Delta$.

\begin{prop} \label{nondegen-equiv}
The following statements are equivalent.
\begin{enumroman}
\item $f$ is nondegenerate.
\item For each face $\tau \subset \Delta$, the ideal of $k[x^{\pm 1}, y^{\pm 1}]$ generated by
\[ f|_\tau,\partial_x f|_\tau, \partial_y f|_\tau \]
is the unit ideal.
\item For each face $\tau \subset \Delta$, the intersection $V \cap \T_\tau$ is smooth of codimension $1$ in the torus orbit $\T_\tau$ associated to $\tau$.
\item The sequence of elements $f,\partial_x f, \partial_y f$ (in degree one) forms a regular sequence in $k[\Delta]$.
\item The quotient of $k[\Delta]$ by the ideal generated by $f,\partial_x f, \partial_y f$ is finite of $k$-dimension equal to $2 \vol(\Delta)$.
\end{enumroman}
\end{prop}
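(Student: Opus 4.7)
The plan is to prove (i) $\Leftrightarrow$ (ii), (i) $\Leftrightarrow$ (iii), and (iv) $\Leftrightarrow$ (v) directly, and then close the loop by showing (i) $\Leftrightarrow$ (iv). First, (i) $\Leftrightarrow$ (ii) is immediate from the weak Nullstellensatz applied to $k[x^{\pm 1}, y^{\pm 1}]$, whose maximal ideals are parameterized by $\kbar^{*2}$. Next, (i) $\Leftrightarrow$ (iii) is the toric form of the Jacobian criterion: the orbit $\T_\tau$ is itself smooth, the subscheme $V \cap \T_\tau$ is cut out inside $\T_\tau$ by $f|_\tau$, and the formal derivatives $\partial_x(f|_\tau)$ and $\partial_y(f|_\tau)$ agree with $\partial_x f|_\tau$ and $\partial_y f|_\tau$, so smoothness of codimension one in $\T_\tau$ is equivalent to the Jacobian system \eqref{nondeg} being unsolvable on $\kbar^{*\dim \tau}$. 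For a zero-dimensional face, $f|_\tau$ is a single nonzero monomial and both conditions are vacuous.

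For (iv) $\Leftrightarrow$ (v), I would invoke that $k[\Delta]$ is a Cohen--Macaulay graded ring of Krull dimension three (by Hochster's theorem on normal affine semigroup rings), whose multiplicity equals $2\vol(\Delta)$ (equivalently, the degree of $X \subset \PP^{\#A-1}_k$ is $2\vol(\Delta)$, which one reads off from the Hilbert polynomial of $k[\Delta]$ via Ehrhart's theorem). In a graded Cohen--Macaulay ring of dimension three, a triple of degree-one elements is a regular sequence if and only if it is a system of parameters, if and only if the quotient is Artinian; in that case the $k$-dimension of the quotient equals the multiplicity. Hence (iv) and (v) both amount to the quotient $k[\Delta]/(f,\partial_x f,\partial_y f)$ being a finite-dimensional $k$-vector space.

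Finally, (i) $\Leftrightarrow$ (iv) follows by combining the above with the toric stratification: (iv) is equivalent to the three sections $f,\partial_x f,\partial_y f$ of $\mathcal{O}_X(1)$ having empty common vanishing on $X = \Proj k[\Delta]$. Using $X = \bigsqcup_\tau \T_\tau$ and the standard fact that a generator $x^iy^j \in k[\Delta]_1$ restricts to zero on $\T_\tau$ precisely when $(i,j)\notin\tau$, the common vanishing on each $\T_\tau$ is cut out by $f|_\tau, \partial_x f|_\tau, \partial_y f|_\tau$, so emptiness on every $\T_\tau$ is exactly condition (i). The main obstacle is pinning down the restriction formula for sections of $\mathcal{O}_X(1)$ along each stratum of the toric stratification together with the Cohen--Macaulayness and multiplicity of $k[\Delta]$; these are classical results for which I would refer to Fulton's monograph. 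Once they are accepted, each equivalence reduces to a short algebraic or geometric translation.
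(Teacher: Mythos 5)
The paper itself does not prove this proposition; it simply cites Batyrev (Section 4), so your proposal is not competing with an argument in the text but supplying one, and as a sketch it is correct and follows the standard line that Batyrev's proof takes. The cycle (i)$\Leftrightarrow$(ii) via the Nullstellensatz, (iv)$\Leftrightarrow$(v) via Cohen--Macaulayness and multiplicity, and (i)$\Leftrightarrow$(iv) via the orbit decomposition of $\Proj k[\Delta]$ is exactly the right architecture. What your write-up buys is a self-contained reduction of all five conditions to a handful of classical toric facts, and it is worth being explicit that these facts carry real content: (1) the identification of $V \cap \T_\tau$ with the zero locus of $f|_\tau$ in $\T_\tau$ requires that $\Delta(f)$ be exactly $\Delta$ and is the nontrivial orbit--closure computation (via one-parameter subgroups); in particular, at a vertex $\tau$ the geometric condition is not literally vacuous --- it asserts $V \cap \T_\tau = \emptyset$, which holds precisely \emph{because} of this identification together with $f|_\tau$ being a nonzero monomial; (2) the Cohen--Macaulayness, normality, and degree-one generation of $k[\Delta]$, as well as the formula $e(k[\Delta]) = 2\vol(\Delta)$, all hold because two-dimensional lattice polytopes admit unimodular triangulations, and this is where the restriction to surfaces genuinely enters (several of these statements fail in higher dimension). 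With those ingredients granted, each of your translations is a short and correct verification, including the observation that $\partial_x$ and $\partial_y$ commute with restriction to faces, which is what makes the facial Jacobian systems the right objects on each stratum.
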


\begin{rmk}
Condition (iii) can also be read as: $V$ is smooth and intersects $X \setminus \mathbb{T}^2_k$
transversally and outside the zero-dimensional toric components associated to the vertices of $\Delta$.
\end{rmk}

\begin{proof}
See Batyrev \cite[Section 4]{Batyrev} for a proof of these
equivalences and further discussion.
\end{proof}

\begin{rmk}
Some authors refer to nondegenerate as \emph{$\Delta$-regular},
though we will not employ this term.
The use of \emph{nondegenerate} to indicate a projective variety
which is not contained in a smaller projective space is
unrelated to our present usage.
\end{rmk}


\begin{exm} \label{duple}
Let $f(x,y) \in k[x,y]$ be a bivariate polynomial of degree $d \in
\Z_{\geq 1}$ with Newton polytope
$\Delta=d\Sigma=\conv(\{(0,0),(d,0),(0,d)\})$. The toric variety
$X(\Delta)_k$ is the $d$-uple Veronese embedding of $\PP^2_k$ in
$\PP^{d(d+3)/2}_k$, and $V(f)$ is the projective curve in $\mathbb{P}^2_k$ defined
by the homogenization $F(x,y,z)$ of $f$.
We see that $f(x,y)$ is $\Delta$-nondegenerate if and only if
$V(f)$ is nonsingular,
does not contain the
coordinate points $(0,0,1)$, $(0,1,0)$ and $(1,0,0)$, and is not
tangent to any coordinate axis.
\end{exm}

\begin{exm}
The following picture illustrates nondegeneracy in case of a quadrilateral Newton polytope.
\begin{center}
\begin{pspicture}(-2.2,-2.2)(2.2,2.5)
\pspolygon[linecolor=black](-1,-1.5)(1.5,-0.3)(1.7,1.7)(-0.5,0.9)
\psline{->}(-2.1,0)(2.1,0) \psline{->}(0,-2.1)(0,2.1)
\rput(0.8,0.7){$\Delta(f)$} \rput(0.9,-1){$\tau_1$}
\rput(1.9,0.5){$\tau_2$} \rput(0.4,1.5){$\tau_3$}
\rput(-1.1,-0.7){$\tau_4$} \rput(-2,2){$\mathbb{R}^2$}
\end{pspicture} \qquad \qquad \qquad
\begin{pspicture}(-2.5,-2.2)(2.1,2.5)
\psccurve[linewidth=1pt](-1.8,-1.7)(-1.6,-1)(-1.5,-1.6)(-1,-1.2)(-0.3,-1.9)(0.5,-0.6)(1.8,-0.2)%
                        (1.2,0.3)(1.9,0.5)(1,0.9)(2,1.5)(1.2,2)(0.8,1.3)(0.2,1.9)(-0.5,1)(-1,1.2)%
                        (-1.8,0.3)(-2,-0.5)
\psline{-}(-2.5,-1.2)(1.7,-1.6) \psline{-}(1.5,-1.8)(1.4,2.5)
\psline{-}(1.6,2.4)(-2.3,0.2) \psline{-}(-1,2)(-2.1,-2)
\rput(0.3,-0.3){$V(f)$} \rput(1.9,-1){$\mathbb{T}_{\tau_1}$}
\rput(-1.4,1.7){$\mathbb{T}_{\tau_3}$}
\rput(0.8,-1.8){$\mathbb{T}_{\tau_4}$}
\rput(0.8,2.3){$\mathbb{T}_{\tau_2}$}
\rput(-2.2,2.2){$X(\Delta)_k$}
\end{pspicture}
\end{center}
\end{exm}


\begin{prop} \label{internallatticegenus}
If $f \in k[x^{\pm 1}, y^{\pm 1}]$ is nondegenerate, then there
exists a $k$-rational canonical divisor $K_\Delta$ on $V=V(f)$ such
that $\{ x^i y^j : (i,j) \in \inter(\Delta) \cap \Z^2\}$ is a $k$-basis
for the Riemann-Roch space $\mathcal{L}(K_\Delta) \subset k(V)$. In
particular, the genus of $V$ is equal to
$\#(\inter(\Delta) \cap \Z^2)$.
\end{prop}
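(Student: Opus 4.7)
The natural framework is toric adjunction: by Proposition~\ref{nondegen-equiv}(iii) the curve $V$ sits as a smooth Cartier divisor in the smooth locus of $X = X(\Delta)_k$, avoiding the singular zero-dimensional toric components and meeting each edge divisor transversally, so adjunction yields
$$\omega_V \cong \bigl(\omega_X \otimes \mathcal{O}_X(V)\bigr)\big|_V.$$
The plan is to identify the right-hand side explicitly via the toric dictionary and then read off both the canonical basis and the genus equality simultaneously.

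The first step is the line-bundle computation. Classically, $\omega_X = \mathcal{O}_X(-\sum_\tau D_\tau)$ summed over the edges $\tau$ of $\Delta$, while $\mathcal{O}_X(V)$ is the polarization with $H^0\bigl(X, \mathcal{O}_X(V)\bigr) = \bigoplus_{(i,j) \in \Delta \cap \Z^2} k \cdot x^iy^j$. Their tensor product corresponds to the lattice polytope obtained from $\Delta$ by moving each edge inward by one unit, so
$$H^0\bigl(X, \omega_X \otimes \mathcal{O}_X(V)\bigr) = \bigoplus_{(i,j) \in \inter(\Delta) \cap \Z^2} k \cdot x^iy^j.$$
The second step is to push these sections down to $V$ via the short exact sequence $0 \to \omega_X \to \omega_X \otimes \mathcal{O}_X(V) \to \omega_V \to 0$; under the Poincar\'e residue, $x^iy^j$ maps to
$$\omega_{i,j} = \res_V\!\left(\frac{x^{i-1}y^{j-1}}{f}\,dx \wedge dy\right) = \frac{x^{i-1}y^{j-1}\,dx}{\partial f/\partial y}\bigg|_V.$$
Taking the long exact cohomology sequence and using $H^0(X, \omega_X) = 0$ (no lattice points in the shrunk polytope) shows the $\omega_{i,j}$ descend injectively to $H^0(V, \omega_V)$; the vanishing $H^1(X, \omega_X) = 0$ (toric, or Serre dual to rationality of $X$) upgrades the injection to an isomorphism. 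Fixing any $(i_0, j_0) \in \inter(\Delta) \cap \Z^2$ and taking $K_\Delta$ to be the zero divisor of $\omega_{i_0, j_0}$ then produces a $k$-rational canonical divisor; the ratios $x^{i-i_0} y^{j-j_0}$ for $(i,j) \in \inter(\Delta) \cap \Z^2$ manifestly yield the claimed basis of $\mathcal{L}(K_\Delta)$, and the genus equality drops out.

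The main technical obstacle is that $X$ may carry cyclic quotient singularities at the vertices of $\Delta$, so the adjunction formula and the cohomology vanishings need justification on $X$ itself. The standard remedy is to pass to a toric resolution $\widetilde{X} \to X$: since $V$ avoids the singular points, this resolution is an isomorphism in a neighborhood of $V$ and transports the canonical bundle computation unchanged; meanwhile $\widetilde{X}$ is smooth projective toric, so Demazure vanishing applies cleanly and the entire argument goes through verbatim. An equivalent sanity check is to compute $2g - 2 = V \cdot (V + K_X)$ directly by toric intersection theory on $\widetilde{X}$ and recognize the answer as $2\#(\inter(\Delta) \cap \Z^2) - 2$.
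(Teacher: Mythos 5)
Your argument is correct, including the honest handling of the vertex singularities: passing to a toric resolution is legitimate here precisely because nondegeneracy keeps $V$ away from the zero-dimensional orbits, and the adjoint sections upstairs are still indexed by $\inter(\Delta)\cap\Z^2$ since interior lattice points satisfy the strict inequality for every ray of the refined fan. The paper itself gives no proof but defers to Khovanski\u{\i} and to Castryck--Denef--Vercauteren, and the argument there is essentially the toric adjunction/Poincar\'e residue computation you describe; the only cosmetic adjustment needed to match the statement literally is to take $K_\Delta=\mathrm{div}(\omega_{i_0,j_0})-\mathrm{div}(x^{i_0}y^{j_0})$, so that the basis of $\mathcal{L}(K_\Delta)$ consists of the monomials $x^iy^j$ themselves rather than the ratios $x^{i-i_0}y^{j-j_0}$.
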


\begin{proof}
See Khovanski\u\i\ \cite{Khovanskii} or Castryck--Denef--Vercauteren
\cite[Section 2.2]{CastryckDenefVercauteren}.
\end{proof}

\begin{rmk}
In general, if $f$ is irreducible (but not necessarily nondegenerate), one has that the
geometric genus of $V(f)$ is bounded by $\#(\inter(\Delta) \cap
\Z^2)$: this is also known as Baker's inequality \cite[Theorem 4.2]{BeelenPellikaan}.
\end{rmk}

We conclude this section with the following intrinsic definition of nondegeneracy.

\begin{defn} \label{geomnondeg}
A curve $V$ over $k$ is \emph{$\Delta$-nondegenerate} if $V$
is birational over $k$ to a curve $U \subset \mathbb{T}^2_k$ defined by a nondegenerate
Laurent polynomial $f$ with Newton polytope $\Delta$.  The curve $V$ is \emph{nondegenerate} if it is $\Delta$-nondegenerate for some $\Delta$.  The curve $V$ is \emph{geometrically nondegenerate} if
$V \times_k \kbar$ is nondegenerate over $\kbar$.
\end{defn}

\section{Moduli of nondegenerate curves}\label{sectionmoduliformulation}

We now construct the moduli space of nondegenerate
curves of given genus $g \geq 2$.  Since in this article we will be concerned with dimension estimates only,
we restrict to the case $k = \kbar$.

We denote by $\calM_g$ the coarse moduli space of curves of genus $g \geq 2$ over $k$,
with the property that for any flat family $\mathcal{V} \rightarrow M$ of curves
of genus $g$, there is a (unique) morphism $M \rightarrow \calM_g$ which maps each closed point $f \in M$
to the isomorphism class of the fiber $\mathcal{V}_f$. (See e.g.\ Mumford \cite[Theorem 5.11]{Mumford}.)


Let $\Delta \subset \R^2$ be a lattice polytope with $g$ interior lattice
points.
We will construct a flat family $\mathcal{V}(\Delta) \rightarrow M_\Delta$ which parametrizes all
$\Delta$-nondegenerate curves over $k$. The key ingredient is provided by the following result
of Gel'fand--Kapranov--Zelevinsky. Let $A = \Delta \cap \mathbb{Z}^2$ and define
the polynomial ring $R_\Delta=k[c_{ij}]_{(i,j) \in A}$.



%



\begin{prop}[{Gel'fand--Kapranov--Zelevinsky \cite{GKZ}}] \label{princadet}
There exists a polynomial $E_A \in R_\Delta$ with the property that
for any Laurent polynomial $f \in k[x^{\pm 1},y^{\pm 1}]$ with
$\supp(f) \subset \Delta$, we have that $f$ is
$\Delta$-nondegenerate if and only if $E_A(f) \neq 0$.
\end{prop}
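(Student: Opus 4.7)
The plan is to construct $E_A$ as a product, over the faces $\tau \subset \Delta$, of the ``$A$-discriminants'' $D_{A_\tau}$ associated to the face supports $A_\tau = \tau \cap \mathbb{Z}^2$. For each face $\tau$ of any dimension, I would consider the set
\[ \mathcal{S}_\tau = \left\{ f \in k^{A} : \text{the system } f|_\tau = \partial_x f|_\tau = \partial_y f|_\tau = 0 \text{ has a solution in } \kbar^{*2} \right\} \]
viewed as a subset of the affine coefficient space $\Spec R_\Delta = \A^{\#A}_k$. By definition, $f$ is $\Delta$-nondegenerate if and only if $f \notin \bigcup_{\tau} \mathcal{S}_\tau$. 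Thus it suffices to show that each $\mathcal{S}_\tau$ is either all of $\A^{\#A}_k$ or is contained in the vanishing locus of some polynomial $D_{A_\tau} \in R_\Delta$; then we may take $E_A = \prod_{\tau} D_{A_\tau}$ (suppressing factors where $\mathcal{S}_\tau$ is empty and setting $D_{A_\tau} = 1$ when the discriminant locus has codimension $\geq 2$ in the relevant coefficient space).

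The first step is to reduce to a single face. Since the coefficients $c_{ij}$ for $(i,j) \in A \setminus A_\tau$ do not appear in $f|_\tau$, the set $\mathcal{S}_\tau$ is a cylinder over its projection $\mathcal{S}_\tau'$ to $\Spec k[c_{ij}]_{(i,j) \in A_\tau}$, so any defining polynomial on the smaller affine space pulls back to one on $\A^{\#A}_k$. For a vertex $\tau = \{(i_0,j_0)\}$, we simply have $D_{A_\tau} = c_{i_0 j_0}$, since $f|_\tau = c_{i_0 j_0} x^{i_0} y^{j_0}$ vanishes on the torus exactly when this coefficient is zero. For $\tau$ of positive dimension, the incidence variety
\[ \mathcal{I}_\tau = \bigl\{ (f, P) : f|_\tau(P) = \partial_x f|_\tau(P) = \partial_y f|_\tau(P) = 0 \bigr\} \subset \A^{A_\tau}_k \times \T^{\dim \tau}_k \]
projects to $\mathcal{S}_\tau'$; a standard dimension count shows that $\mathcal{I}_\tau$ is irreducible of dimension $\#A_\tau - 1$ (the extra derivative condition cuts out a codimension-one subvariety of the generic fiber, up to scalings), so the closure of $\mathcal{S}_\tau'$ is either a hypersurface (the $A_\tau$-discriminant locus) or everything.

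The second step is to verify that the projection is actually a hypersurface whenever a genuine discriminant exists; the polynomial $D_{A_\tau}$ is then defined (up to scalar) as the generator of the vanishing ideal of the closure of $\mathcal{S}_\tau'$. This is the content of GKZ's theory of $A$-discriminants, and I would simply cite \cite{GKZ}. Finally, setting $E_A = \prod_{\tau \subset \Delta} D_{A_\tau}$, one has $E_A(f) \neq 0$ if and only if $f \notin \mathcal{S}_\tau$ for every face $\tau$, which by Definition~\ref{nondegen-def} is the statement of $\Delta$-nondegeneracy.

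The main obstacle is the irreducibility/dimension analysis for the incidence variety $\mathcal{I}_\tau$ on the faces of positive dimension: one must check that partial derivatives really do cut $\mathcal{I}_\tau$ down by the expected codimension (so that the projection has image of codimension at most one, hence cut out by a single polynomial). This is precisely the point where one leverages the torus action: homogeneity under $\T^{\dim \tau}_k$ makes the generic fiber a principal homogeneous space and forces the expected codimension count, yielding the required hypersurface structure.
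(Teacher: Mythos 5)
Your overall strategy---building $E_A$ as a product of face discriminants, one for each face of $\Delta$---is exactly the prime factorization of the principal $A$-determinant from \cite[Chapter 10]{GKZ}, which is what the paper invokes; the paper's own ``proof'' is nothing more than a citation to that construction together with the remark that it is defined over $\Z$ and hence characterizes nondegeneracy over any algebraically closed field. Your elimination-theoretic definition of $D_{A_\tau}$ directly over $\kbar$ (as an equation for the closure of the image of the incidence variety) is a sensible way to sidestep that base-change issue. However, there is a genuine gap in your final step. You define $D_{A_\tau}$ as a generator of the vanishing ideal of the Zariski \emph{closure} $\overline{\mathcal{S}_\tau'}$, and then assert that $E_A(f) \neq 0$ if and only if $f \notin \mathcal{S}_\tau$ for every face $\tau$. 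The implication $E_A(f) \neq 0 \Rightarrow f \notin \bigcup_\tau \mathcal{S}_\tau$ is fine, but the converse requires $\bigcup_\tau \overline{\mathcal{S}_\tau} \subset \bigcup_\tau \mathcal{S}_\tau$, and the individual sets $\mathcal{S}_\tau$ are \emph{not} closed: the solution point ranges over a torus, which is not proper, so the image of $\mathcal{I}_\tau$ need not be closed. Concretely, for $\Delta = 2\Sigma$ the polynomial $f = xy$ satisfies $D_\Delta(f) = 0$ (plug into the explicit formula in the paper) yet has no singular point in $\kbar^{*2}$, so $f \in \overline{\mathcal{S}_\Delta} \setminus \mathcal{S}_\Delta$; it is degenerate anyway only because the vertex coefficient $c_{00}$ vanishes, and it is precisely this absorption by smaller faces that you must prove in general.

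The missing idea is a properness/stratification argument: replace the incidence variety over $\T^{\dim \tau}_k$ by the incidence variety inside $\A^{\#A}_k \times X(\Delta)_k$. Since $X(\Delta)_k$ is complete, the image of this variety in coefficient space is closed, and it stratifies as the union of the $\mathcal{S}_\tau$ over all faces; hence the union is closed even though the individual strata are not, and every point of $\overline{\mathcal{S}_\tau} \setminus \mathcal{S}_\tau$ lies in $\mathcal{S}_\sigma$ for some face $\sigma \subsetneq \tau$. A related loose end: when you set $D_{A_\tau} = 1$ because the discriminant locus has codimension $\geq 2$ (e.g.\ an edge with only two lattice points), the containment $\mathcal{S}_\tau \subset V(D_{A_\tau}) = \emptyset$ fails, and again you need $\mathcal{S}_\tau$ to be covered by the discriminants of the sub-faces of $\tau$. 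Both repairs are standard and are carried out in \cite{GKZ}, but as written your concluding biconditional does not follow from what you have established.
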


\begin{proof}
The proof of Gel'fand--Kapranov--Zelevinsky \cite[Chapter 10]{GKZ} is over $\mathbb{C}$; however, the
construction yields a polynomial over $\Z$ which is easily seen to
characterize nondegeneracy for an arbitrary (algebraically closed) field.
\end{proof}

The polynomial $E_A$ is known as the \emph{principal
$A$-determinant} and is given by the
\emph{$A$-resultant} $\res_A(F,\partial_1 F, \partial_2 F)$.
It is homogeneous in the variables $c_{ij}$ of
degree $6\vol(\Delta)$, and its irreducible factors are the
\emph{face discriminants} $D_\tau$ for faces $\tau \subset \Delta$.

\begin{exm}
Consider the universal plane conic
\[ F=c_{00}+c_{10}x+c_{01}y+c_{20}x^2+c_{11}xy+c_{02}y^2, \]
associated to the Newton polytope $2\Sigma$ as in Example \ref{duple}.

Then
\[ E_A=c_{00}c_{02}c_{20}(c_{11}^2-4c_{02}c_{20})(c_{10}^2-4c_{00}c_{20})(c_{01}^2-4c_{00}c_{02})D_\Delta \]
where
\[ D_\Delta=4c_{00}c_{20}c_{02}-c_{00}c_{11}^2 - c_{10}^2c_{02} - c_{01}^2c_{20}+c_{10}c_{01}c_{11}. \]
The nonvanishing of the factor $c_{00}c_{02}c_{20}$ (corresponding to the discriminants of the zero-dimensional faces) ensures that the curve does not contain a coordinate point, and in particular does not have Newton polytope smaller than $2\Sigma$; the nonvanishing of the quadratic factors (corresponding to the one-dimensional faces) ensures that the curve intersects the coordinate lines in two distinct points; and the nonvanishing of $D_\Delta$ ensures that the curve is smooth.
\end{exm}


Let $M_\Delta$ be the complement in $\mathbb{P}^{\# A - 1}_k = \text{Proj} \, R_\Delta$ of the algebraic set defined by $E_A$. By the above,
$M_\Delta$ parameterizes nondegenerate polynomials having $\Delta$ as Newton polytope. One can show that
\begin{equation} \label{dimensionremark}
 \dim M_\Delta = \# A - 1,
\end{equation}
which is a non-trivial statement if $k$ is of finite characteristic (and false in general for an arbitrary number of variables), see \cite[Section~2]{CastryckDenefVercauteren}. Let $\mathcal{V}(\Delta)$ be the closed subvariety of
\[ X(\Delta)_k \times M_\Delta \subset \text{Proj} \, k[t_{ij}] \times \text{Proj} \, k[c_{ij}] \]
defined by the universal hyperplane section
\[ \sum_{(i,j) \in A} c_{ij} t_{ij} = 0. \]
Then the universal family of $\Delta$-nondegenerate curves is realized by
the projection map $\varphi: \mathcal{V}(\Delta) \rightarrow M_\Delta$. The fiber $\mathcal{V}(\Delta)_f$
above a nondegenerate Laurent polynomial $f \in M_\Delta$ is precisely
the corresponding curve $V(f)$, realized as the corresponding hyperplane section of
$X(\Delta)_k \subset \text{Proj} \, k[t_{ij}]$. Note that $\varphi$ is
indeed flat \cite[Theorem III.9.9]{Hartshorne}, since the Hilbert polynomial of $\mathcal{V}(\Delta)_f$
is independent of $f$: its degree is equal to $\deg X(\Delta)_k$ and its genus is $g$ by Proposition~\ref{internallatticegenus}.

Thus by the universal property of $\calM_g$, there is a morphism $h_\Delta : M_\Delta \rightarrow \mathcal{M}_g$, the image of
which consists precisely of all isomorphism classes containing a $\Delta$-nondegenerate curve. Let $\mathcal{M}_\Delta$ denote the Zariski closure of the image of $h_\Delta$.  Finally, let
\[ \Mgnd = \bigcup_{g(\Delta) = g} \calM_\Delta, \]
where the union is taken over all polytopes $\Delta$ with $g$ interior
lattice points, of which there are finitely many up to equivalence (see Hensley \cite{Hensley}).

The aim of Sections~\ref{SectionBounds}--\ref{morecurves} is to estimate $\dim \Mgnd$. This is done by first refining
the obvious upper bounds $\dim \mathcal{M}_\Delta \leq \dim M_\Delta = \#(\Delta \cap \mathbb{Z}^2) - 1$,
taking into account the action of the automorphism group $\text{Aut}(X(\Delta)_k)$, and then estimating the outcome in terms of $g$.

\begin{rmk}
It follows from the fact that $\calM_g$ is of general type for $g \geq 23$ (see e.g.\ \cite{HarrisMorrison}) that $\dim \Mgnd < \dim \calM_g=3g-3$ for $g \geq 23$, since each component of $\Mgnd$ is rational.  Below, we obtain much sharper results which do not rely on this deep result.
\end{rmk}

\section{Triangular nondegeneracy} \label{sectiontechnical}

In Sections~\ref{SectionInterestingClasses}--\ref{sectionthreefour},
we study the nondegeneracy of certain well-known classes, such as elliptic, hyperelliptic
and $C_{ab}$ curves.
In many cases, classical constructions provide models for these
curves that are supported on a triangular Newton polytope; the elementary
observations in this section will allow us to prove that these
models are nondegenerate when $\#k$ is not too small.

\begin{lem} \label{translating}
Let $f(x,y) \in k[x,y]$ define a smooth affine curve of genus $g$
and suppose that $\# k > 2(g + \max(\deg_x f, \deg_y f)-1) +
\min(\deg_x f, \deg_y f)$. Then there exist $x_0,y_0 \in k$ such
that the translated curve $f(x-x_0,y-y_0)$ does not contain $(0,0)$
and is also nontangent to both the $x$- and the $y$-axis.
\end{lem}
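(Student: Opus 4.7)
The plan is to reduce the three geometric conditions to polynomial conditions on $(a, b) := (-x_0, -y_0)$ and count the violating choices. Setting $\tilde f(x, y) = f(x - x_0, y - y_0)$, the origin lies on the translated curve iff $\tilde f(0, 0) = f(a, b) = 0$, and tangency to the $x$-axis means $\tilde f(x, 0) = f(x - x_0, b) \in k[x]$ has a multiple root in $x$, which (after the substitution $x \mapsto x + x_0$) is equivalent to $f(x, b) \in k[x]$ having a multiple root; similarly for the $y$-axis. So the task reduces to finding $(a, b) \in k^2$ such that
\begin{enumroman}
\item $f(a, b) \neq 0$;
\item $f(x, b) \in k[x]$ has only simple roots;
\item $f(a, y) \in k[y]$ has only simple roots.
\end{enumroman}

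The key input is Riemann--Hurwitz. Let $\tilde C$ denote the smooth projective model of the smooth affine curve $\{f = 0\}$, of genus $g$. Consider the two projections $\pi_1 \colon \tilde C \to \mathbb{P}^1_k$, $(x, y) \mapsto y$, of degree $\deg_x f$, and $\pi_2 \colon \tilde C \to \mathbb{P}^1_k$, $(x, y) \mapsto x$, of degree $\deg_y f$. A value $b \in k$ violating (ii) corresponds to a finite branch value of $\pi_1$: indeed, if $f(\alpha, b) = \partial_x f(\alpha, b) = 0$, smoothness of $\{f=0\}$ forces $\partial_y f(\alpha, b) \neq 0$, so the tangent at $(\alpha, b)$ is horizontal and $\pi_1$ ramifies there. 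Riemann--Hurwitz then bounds the number of such $b$'s by $2g - 2 + 2\deg_x f$; symmetrically, the number of $a \in k$ violating (iii) is at most $2g - 2 + 2\deg_y f$.

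Next, I would first choose $a \in k$ satisfying (iii): this is possible because the stated lower bound on $\#k$ dominates $2g - 2 + 2\deg_y f$. With this $a$ fixed, the $b$'s violating (i) or (ii) number at most $\deg_y f + (2g - 2 + 2\deg_x f)$; a short case check (according to whether $\deg_x f \geq \deg_y f$ or $\deg_x f < \deg_y f$) shows that this quantity is bounded by $2(g + \max(\deg_x f, \deg_y f) - 1) + \min(\deg_x f, \deg_y f) < \#k$. Hence a good $b$ exists, and setting $x_0 = -a$, $y_0 = -b$ yields the desired translation.

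The main technical obstacle is the separability assumption hidden in Riemann--Hurwitz. In characteristic zero nothing needs to be checked; in positive characteristic, should $\pi_1$ or $\pi_2$ fail to be separable, one would pass to the maximal separable quotient of the corresponding function-field extension and run the count with the reduced (separable) degree. A minor additional point is that branch values lying above $\infty \in \mathbb{P}^1_k$ impose no constraint on $a$ or $b$, so the above upper bounds are in fact over-estimates.
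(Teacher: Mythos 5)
Your proof is essentially the paper's argument: first choose the horizontal translate using the Riemann--Hurwitz bound $2(g+\deg_y f-1)$ on vertical tangencies, then choose the vertical translate using the bound $2(g+\deg_x f-1)$ on horizontal tangencies plus the at most $\deg_y f$ values placing the origin on the curve; your two-case check simply replaces the paper's harmless normalization $\deg_y f\le\deg_x f$, and the totals agree. The separability caveat you raise is real but equally implicit in the paper's one-line appeal to Riemann--Hurwitz, and be aware that your proposed patch (counting with the reduced separable degree) cannot work in the genuinely inseparable case, since there every fiber of the projection is ramified and the conclusion itself fails --- e.g.\ for $f=y-x^{p}$ in characteristic $p$ every translate remains tangent to the $x$-axis --- so the statement tacitly presumes both projections are separable.
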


\begin{proof}
Suppose $\deg_yf \leq \deg_xf$. Applying the Riemann-Hurwitz theorem
to the projection map $(x,y) \mapsto x$, one verifies that there are
at most $2(g + \deg_y f - 1)$ points with a vertical tangent.
Therefore, we can find an $x_0 \in k$ such that $f(x-x_0,y)$ is
nontangent to the $y$-axis. Subsequently, there are at most $2(g +
\deg_x f - 1) + \deg_yf$ values of $y_0 \in k$ for which
$f(x-x_0,y-y_0)$ is tangent to the $x$-axis and/or contains $(0,0)$.
\end{proof}

\begin{lem} \label{fieldlowerbound}
Let $a \leq b \in \mathbb{Z}_{\geq 2}$ be such that $\gcd(a,b) \in
\{1,a\}$, and let $\Delta$ be the triangular lattice polytope
$\conv(\{(0,0),(b,0),(0,a)\})$. Let $f(x,y) \in k[x,y]$ be an
irreducible polynomial such that:
\begin{itemize}
 \item $f$ is supported on $\Delta$, and
 \item the genus of $V(f)$ equals $g = \#(\inter(\Delta) \cap
\mathbb{Z}^2)$.
\end{itemize}
Then if $\#k > 2(g+b-1) + a$, we have that $V(f)$ is
$\Delta$-nondegenerate.
\end{lem}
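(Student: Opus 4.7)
The plan is to apply Lemma \ref{translating} to produce a $\Delta$-nondegenerate polynomial $\tilde{f}$ defining a curve birational to $V(f)$, which by Definition \ref{geomnondeg} makes $V(f)$ a $\Delta$-nondegenerate curve.

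The pivotal combinatorial observation is that the triangle $\Delta=\conv\{(0,0),(b,0),(0,a)\}$ is \emph{downward closed}: whenever $(i,j)\in\Delta$ and $(i',j')\in\mathbb{Z}_{\geq 0}^2$ satisfies $i'\leq i$ and $j'\leq j$, the point $(i',j')$ also lies in $\Delta$, since the facet inequality $aI+bJ\leq ab$ is monotone in each coordinate. Consequently the translated polynomial $\tilde{f}(x,y):=f(x-x_0,y-y_0)$ remains supported on $\Delta$ for any $x_0,y_0\in k$. Moreover, $(b,0)$ is the unique lattice point of $\Delta$ with first coordinate $b$ and $(0,a)$ is the unique one with second coordinate $a$, so translation fixes both the vertex coefficients $\tilde{c}_{b0}=c_{b0}$, $\tilde{c}_{0a}=c_{0a}$ and the diagonal-edge restriction $\tilde{f}|_{\tau_3}=f|_{\tau_3}$.

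First I would argue that the maximum-genus hypothesis $g(V(f))=\#(\inter(\Delta)\cap\mathbb{Z}^2)$, being the equality case of Baker's inequality, forces the projective closure $\bar V\subset X(\Delta)$ to be smooth and to intersect the toric boundary in the expected combinatorial way. This rules out $c_{b0}=0$ and $c_{0a}=0$, since otherwise $\bar V$ would pass through a torus-fixed point of $X(\Delta)$ which (by virtue of the condition $\gcd(a,b)\in\{1,a\}$) is typically a surface singularity, incompatible with smoothness of $\bar V$; the same smoothness forces the diagonal-edge restriction $f|_{\tau_3}$ to have distinct nonzero roots. In particular $V(f)$ is smooth as an affine curve, so Lemma \ref{translating} applies. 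Since $\deg_x f\leq b$, $\deg_y f\leq a$, and $a\leq b$, the field bound from that lemma reduces to $\#k>2(g+b-1)+a$, exactly our hypothesis; it yields $x_0,y_0\in k$ for which $V(\tilde f)$ avoids $(0,0)$ (so $\tilde c_{00}\neq 0$) and is nontangent to both coordinate axes (so $\tilde f(x,0)$ and $\tilde f(0,y)$ have distinct nonzero roots).

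Combining these facts, $\tilde f$ satisfies the nondegeneracy condition of Definition \ref{nondegen-def} on every face of $\Delta$: all three vertex coefficients are nonzero; each edge restriction has distinct nonzero roots (the two axial ones from the translation, the diagonal one inherited from $f$); and $V(\tilde f)\cap(\bar k^*)^2$ is smooth since $V(\tilde f)$ is a smooth affine curve avoiding the origin. Because $V(\tilde f)$ and $V(f)$ coincide up to affine translation they are birational, and so $V(f)$ is $\Delta$-nondegenerate. The hardest step will be the first one, where one must carefully deduce the vertex nonvanishing $c_{b0},c_{0a}\neq 0$ and the diagonal-edge distinctness for $f$ itself from the maximum-genus hypothesis, by analyzing the smoothness of $\bar V$ inside the potentially singular toric surface $X(\Delta)$ and exploiting the structure of its toric singularities under the arithmetic condition $\gcd(a,b)\in\{1,a\}$.
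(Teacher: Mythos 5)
Your argument only covers (a version of) the coprime case, and even there the key step is justified incorrectly. The lemma's hypothesis $\gcd(a,b)\in\{1,a\}$ deliberately includes the case $a\mid b$ with $a\geq 2$ (this is the case actually used for hyperelliptic and conical genus-$4$ curves), and there your two structural claims fail. First, the maximum-genus hypothesis does \emph{not} force $c_{b0}\neq 0$: for example with $a=2$, $b=6$ the triangles $\conv\{(0,0),(6,0),(0,2)\}$ and $\conv\{(0,0),(5,0),(0,2)\}$ have the same two interior lattice points, so Baker's inequality is silent, and no smoothness argument can rescue this since the curve genuinely may have the smaller Newton polytope. The paper handles this with the shear $y\leftarrow y+x^{b/a}$. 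Second, when $\gcd(a,b)=a\geq 2$ the diagonal edge $\tau_3$ has $a-1$ interior lattice points, so nondegeneracy along it is a nontrivial condition --- and, as you yourself observe, translation fixes $f|_{\tau_3}$, so Lemma~\ref{translating} can never repair a degenerate diagonal edge. The paper needs a second idea here: the unimodular map $(X,Y)\mapsto(b-X-\tfrac{b}{a}Y,\,Y)$ swaps $\tau_3$ with the edge on the $Y$-axis, after which a second application of Lemma~\ref{translating} finishes. None of this appears in your proposal.

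Even restricted to $\gcd(a,b)=1$, your route to the vertex nonvanishing and edge conditions is off. You derive them from ``smoothness of $\bar V\subset X(\Delta)$,'' but (i) establishing that smoothness via the sectional genus formula already presupposes $\Delta(f)=\Delta$, i.e.\ the vertex nonvanishing you are trying to prove, and (ii) smoothness of $\bar V$ does \emph{not} imply nontangency to the toric boundary, which is exactly what the edge discriminants measure; a smooth curve tangent to a boundary divisor corresponds to $f|_\tau$ having a repeated root. The paper's argument is both simpler and correct: if a vertex coefficient vanished, or if the affine curve had a singular point (translate it to the origin), the Newton polytope would lose interior lattice points and Baker's inequality would force the genus below $g$. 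In the coprime case the diagonal edge then needs no attention at all, since it has no interior lattice points and its face discriminant is automatically nonvanishing --- not because of any ``distinct roots'' statement. You should redo the vertex/smoothness step via Baker's inequality and then supply the missing $\gcd(a,b)=a$ case.
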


\begin{proof}
First suppose that $\gcd(a,b)=1$. The coefficients of $x^b$ and
$y^a$ must be nonzero, because else $\#(\inter(\Delta(f)) \cap \Z^2) < g$,
which contradicts Baker's inequality. For the same reason, $f$ must
define a smooth affine curve: if $(x_0,y_0)$ is a singular point
(over $\kbar$), then $\#(\inter(\Delta(f(x-x_0,y-y_0)) \cap \Z^2)) <
g$. The result now follows from Lemma~\ref{translating}. Note that
the nonvanishing of the face discriminant $D_\tau$, where $\tau$ is
the edge connecting $(b,0)$ and $(0,a)$, follows automatically from
the fact that $\tau$ has no interior lattice points.

Next, suppose that $\gcd(a,b) = a$. If $a < b$ then the coefficient of $y^a$ must be nonzero, and
by substituting $y \leftarrow y + x^{b/a}$ if necessary we may assume that the coefficient of $x^b$
is nonzero as well. If $a=b$ then the coefficient of $y^a$ might be zero, but then the coefficient of $x^b$ is nonzero and
interchanging the role of $x$ and $y$
solves this problem. As above, we have that $f$ defines a smooth affine
curve. So by applying Lemma~\ref{translating}, we may assume that
the face discriminants decomposing $E_{\Delta \cap \Z^2}$ are
nonvanishing at $f$, with the possible exception of $D_\tau$, where
$\tau$ is the edge connecting $(b,0)$ and $(0,a)$. However, under
the equivalence
\[ \mathbb{R}^2 \to \mathbb{R}^2 : (X,Y) \mapsto (b - X -
\frac{b}{a} Y, Y), \] $\tau$ is interchanged with the edge
connecting $(0,0)$ and $(0,a)$. By applying Lemma~\ref{translating}
again, we obtain full nondegeneracy.
\end{proof}

\section{Nondegeneracy of curves of genus at most one} \label{SectionInterestingClasses}

\subsection*{Curves of genus $\mathbf{0}$} \label{genus0}
Let $V$ be a curve of genus $0$ over $k$.  The anticanonical divisor embeds
$V \hookrightarrow \PP^2_k$ as a smooth conic.  If $\#k=\infty$, then
by Lemma \ref{fieldlowerbound} and Proposition \ref{nondegen-equiv}, we see that $V$ is nondegenerate.
If $\#k < \infty$ then $V(k) \neq \emptyset$ by Wedderburn, hence $V \cong \PP^1_k$ can be embedded as a nondegenerate line in $\PP^2$.  Therefore, any curve $V$ of genus $0$ is $\Delta$-nondegenerate, where $\Delta$ is one of the following:
\begin{center}
\begin{pspicture}(-0.5,-0.5)(1.2,1.2)
\pspolygon[fillstyle=solid,linecolor=black](0,0)(0.4,0)(0,0.4)
\psline{->}(-0.5,0)(1.2,0) \psline{->}(0,-0.5)(0,1.2)
\rput(-0.2,0.4){\small $1$} \rput(0.4,-0.25){\small $1$}
\end{pspicture}
\qquad
\begin{pspicture}(-0.5,-0.5)(1.2,1.2)
\pspolygon[fillstyle=solid,linecolor=black](0,0)(0.8,0)(0,0.8)
\psline{->}(-0.5,0)(1.2,0) \psline{->}(0,-0.5)(0,1.2)
\rput(-0.2,0.8){\small $2$} \rput(0.8,-0.25){\small $2$}
\end{pspicture}
\end{center}

\subsection*{Curves of genus $\mathbf{1}$}
Let $V$ be a curve of genus $1$ over $k$.  First suppose that $V(k)
\neq \emptyset$. Then $V$ is an elliptic curve and hence can be defined by a
nonsingular Weierstrass equation
\begin{equation} \label{weierstrass-elliptic}
y^2 + a_1xy+a_3y = x^3 + a_2x^2 + a_4x + a_6
\end{equation}
with $a_i \in k$. The corresponding Newton polytope $\Delta$ is
\begin{center}
\begin{pspicture}(-0.5,-0.5)(1.6,1.2)
\pspolygon[fillstyle=solid,linecolor=black](0,0)(1.2,0)(0,0.8)
\psline{->}(-0.5,0)(1.6,0) \psline{->}(0,-0.5)(0,1.2)
\psline[linestyle=dashed]{-}(0,0.8)(0.4,0)
\psline[linestyle=dashed]{-}(0,0.4)(0.4,0)
\psline[linestyle=dashed]{-}(0,0.4)(0.8,0)
\psline[linestyle=dashed]{-}(0,0.4)(1.2,0) \rput(-0.2,0.8){\small
$2$} \rput(1.2,-0.25){\small $3$}
\end{pspicture}
\end{center}
where one of the dashed lines appears as a facet if $a_6 = 0$. By
Lemma~\ref{fieldlowerbound}, we have that $V$ is nondegenerate if
$\#k \geq 9$. With some extra work we can get rid of this condition.

For $A=\Delta \cap \Z^2$, the principal $A$-determinant has $7$ or
$9$ face discriminants $D_\tau$ as irreducible factors.  The
nonvanishing of $D_\Delta$ corresponds to the fact that our curve is smooth in $\T_k^2$. In case $\tau$
is a vertex or a facet containing no interior lattice points, the
nonvanishing of $D_\tau$ is automatic. Thus it suffices to consider
the discriminants $D_\tau$ for $\tau$ a facet supported on the
$X$-axis (denoted $\tau_X$) or the $Y$-axis (denoted $\tau_Y$).
First, suppose that $\opchar k \neq 2$. After completing the square,
we have $a_1=a_3=0$ and the nonvanishing of $D_{\tau_X}$ follows
from the fact that the polynomial $p(x) = x^3+a_2x^2+a_4x+a_6$ is
squarefree. The nonvanishing of $D_{\tau_Y}$ (if $\tau_Y$ exists) is
clear. Now suppose $\opchar k = 2$.  Let $\delta$ be the number of distinct roots (over
$\kbar$) of $p(x) = x^3+a_2x^2+a_4x+a_6$. If $\delta = 3$ then
$D_{\tau_X}$ is non-vanishing. For the nonvanishing of $D_{\tau_Y}$,
it then suffices to substitute $x \leftarrow x + 1$ if necessary, so that $a_3$ is nonzero (note that
not both $a_1$ and $a_3$ can be zero). If $\delta < 3$ then $p(x)$ has a
root $x_0$ of multiplicity at least $2$. Since $k$ is perfect, this
root is $k$-rational and after substituting $x \mapsto x + x_0$ we
have $p(x) = x^3 + a_2x^2$. In particular, $D_{\tau_X}$ (if $\tau_X$
exists) and $D_{\tau_Y}$ do not vanish.

In conclusion, we have shown that every genus $1$ curve $V$ over a
field $k$ is nondegenerate, given that $V(k) \neq \emptyset$. This
condition is automatically satisfied if $k$ is a finite field (by
Hasse--Weil) or if $k$ is algebraically closed. In particular, every
genus $1$ curve is geometrically nondegenerate. More generally, we
define the \emph{index} of a curve $V$ over a field $k$ to be the
least degree of an effective non-zero $k$-rational divisor on $V$
(equivalently, the least extension degree of a field $L \supset k$
for which $V(L) \neq \emptyset$).  We then have the following
criterion.


\begin{lem} \label{genus1nondegcriterion}
A curve $V$ of genus $1$ is nondegenerate if and only if $V$ has index at most $3$.
\end{lem}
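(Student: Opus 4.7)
The plan is to prove each direction separately. For the ``if'' direction, suppose the index $d$ of $V$ is at most $3$. The case $d=1$ is exactly the discussion immediately preceding the lemma, via the Weierstrass model. For $d=2$, a $k$-rational divisor of degree $2$ defines a separable double cover $V \to \PP^1_k$ (separable because an inseparable cover would force $V \cong \PP^1_k$), yielding a model $y^2 + h(x)y = f(x)$ with $\deg h \leq 2$ and $\deg f \leq 4$. The assumption $V(k) = \emptyset$ forces $\deg f = 4$, so after a translation in $x$ the Newton polytope becomes $\Delta = \conv\{(0,0),(4,0),(0,2)\}$, which has the unique interior lattice point $(1,1)$; apply Lemma~\ref{fieldlowerbound} with $(a,b) = (2,4)$, noting that $k$ must be infinite since any genus $1$ curve over a finite field has a rational point. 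For $d=3$, a degree-$3$ line bundle on $V$ is very ample (since $3 \geq 2g+1$) and embeds $V$ as a smooth plane cubic, whose defining polynomial has Newton polytope $3\Sigma$; apply Lemma~\ref{fieldlowerbound} with $(a,b) = (3,3)$.

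For the ``only if'' direction, suppose $V$ is $\Delta$-nondegenerate. By Proposition~\ref{internallatticegenus}, $\Delta$ has exactly one interior lattice point. For each edge $\tau$ of $\Delta$, the facet polynomial $f|_\tau$, viewed after change of coordinates as a single-variable polynomial of degree $\ell(\tau) := \#(\tau \cap \Z^2) - 1$, has $\ell(\tau)$ distinct nonzero roots over $\kbar$ by nondegeneracy, and these correspond to the $\ell(\tau)$ points of the effective $k$-rational divisor $V \cap \T_\tau$ on $V$. Thus the index of $V$ is bounded above by $\min_\tau \ell(\tau)$, so it suffices to exhibit an edge of $\Delta$ with $\ell(\tau) \leq 3$.

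This last fact follows from Scott's classical inequality for lattice polygons: any polygon with $I \geq 1$ interior lattice points has at most $B \leq 2I + 7$ boundary lattice points. For our $\Delta$ with $I=1$ we get $B \leq 9$. Since $B = \sum_\tau \ell(\tau)$ and $\Delta$ has at least three edges, if every edge satisfied $\ell(\tau) \geq 4$ we would have $B \geq 12$, a contradiction. The main technical point to watch out for is the verification that $V \cap \T_\tau$ is genuinely a $k$-rational effective divisor of the claimed degree $\ell(\tau)$; this uses the transversality built into nondegeneracy together with the smoothness of $\T_\tau$ in $X(\Delta)_k$ away from the vertex points, and it is also responsible for ensuring that the facet polynomial has exactly $\ell(\tau)$ distinct roots rather than fewer.
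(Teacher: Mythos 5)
Your proposal is correct, and the ``if'' direction is essentially the paper's argument: reduce to the index-$1$ Weierstrass case, and for index $2$ (resp.\ $3$) produce via Riemann--Roch a model supported on $\conv(\{(0,0),(4,0),(0,2)\})$ (resp.\ $3\Sigma$), then invoke Lemma~\ref{fieldlowerbound} together with the observation that $V(k)=\emptyset$ forces $\#k=\infty$. The paper simply outsources the construction of these models to Fisher rather than rederiving them. Where you genuinely diverge is the ``only if'' direction. The paper's proof is a finite check: it cites the classification of the $16$ equivalence classes of lattice polygons with exactly one interior lattice point and verifies by inspection that each has an edge $\tau$ with $\ell(\tau)\leq 3$. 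You instead deduce the existence of such an edge from Scott's inequality $B\leq 2I+7$: with $I=1$ one gets $B\leq 9$, while three or more edges each of lattice length $\geq 4$ would force $B\geq 12$. This is cleaner and avoids the enumeration entirely (the enumeration is still needed elsewhere in the paper, e.g.\ in Proposition~\ref{maintheorem}, so the authors lose nothing by reusing it here); the small price is an appeal to Scott's theorem, which the paper does not explicitly state, though it is classical and closely tied to the Haase--Schicho bound the paper does use. Your explicit justification that $V\cap\T_\tau$ is a reduced $k$-rational effective divisor of degree $\ell(\tau)$ --- via the squarefreeness and nonvanishing at $0$ of the facet polynomial forced by nondegeneracy --- is a point the paper asserts without comment, and is correct.
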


\begin{proof}
First, assume that $V$ is nondegenerate. There are exactly $16$
equivalence classes of polytopes with only $1$ interior lattice
point; see \cite[Figure 2]{PoonenRodriguez} or the appendix at the
end of this article. So we may assume that $V$ is
$\Delta$-nondegenerate with $\Delta$ in this list. Now for every
facet $\tau \subset \Delta$, the toric component $\T_\tau$ of
$X(\Delta)_k$ cuts out an effective $k$-rational divisor of degree
$\ell(\tau)$ on $V$, where $\ell(\tau) + 1$ is the number of lattice
points on $\tau$.  The result then follows, since one easily
verifies that every polytope in the list contains a facet $\tau$
with $\ell(\tau) \leq 3$.

Conversely, suppose that $V$ has index $\imath \leq 3$. If $\imath =
1$, we have shown above that $V$ is nondegenerate.  If $\imath = 2$ (resp.\ $\imath=3$), using Riemann-Roch one can
construct a plane model $f \in k[x,y]$ with $\Delta(f) \subset \conv(\{(0,0),(4,0),(0,2)\})$ (resp.\ $\Delta(f) \subset 3\Sigma$); see e.g.\ Fisher \cite[Section 3]{Fisher} for details.  Then since $V(k) = \emptyset$ and hence $\#k = \infty$, an application of Lemma \ref{fieldlowerbound} concludes the proof.
\end{proof}


\begin{rmk} \label{ellipticnotnondeg}
There exist genus $1$ curves of arbitrarily large index over every
number field; see Clark \cite{Clark}. Hence there exist infinitely
many genus $1$ curves which are not nondegenerate.
\end{rmk}

\section{Nondegeneracy of hyperelliptic curves and $C_{ab}$ curves} \label{sectionhyperelliptic}

\subsection*{Hyperelliptic curves} A curve $V$ over $k$ of genus $g \geq 2$ is \emph{hyperelliptic} if there
exists a nonconstant morphism $V \to \PP^1_k$ of degree $2$. The morphism is automatically
separable \cite[Proposition IV.2.5]{Hartshorne}
and the curve can be defined by a Weierstrass equation
\begin{equation} \label{weierstrass}
 y^2 + q(x)y = p(x).
 \end{equation}
Here $p(x),q(x) \in k[x]$ satisfy $2 \deg q(x) \leq \deg p(x)$ and $\deg p(x) \in \{2g+1, 2g+2\}$.
The universal such curve has Newton
polytope
as follows:
\begin{center}
\begin{pspicture}(-0.5,-0.7)(3.0,1)
\pspolygon[fillstyle=solid,linecolor=black](0,0)(2.5,0)(0,0.5)
\psline{->}(-0.5,0)(3.0,0) \psline{->}(0,-0.7)(0,1)
\rput(-0.18,0.5){\small $2$} \rput(2.4,-0.25){\small $2g+1$}
\rput(2.4,-0.58){\small \text{or }$2g+2$}
\end{pspicture}
\end{center}
By Lemma~\ref{fieldlowerbound}, if $\#k \geq 6g + 5$ then $V$ is
nondegenerate. In particular, if $\#k \geq 17$ then every curve of genus $2$ is nondegenerate.

If $\opchar k \neq 2$, we can drop the condition on
$\#k$ by completing the square, as in the elliptic curve case. This observation immediately
weakens the condition to $\#k \geq 2^{ \lfloor \log_2(6g + 5) \rfloor } + 1$. As a consequence, $\#k \geq 17$ is
also sufficient for every hyperelliptic curve of genus $3$ or $4$ to be nondegenerate.

Conversely, any curve defined by a nondegenerate polynomial as
in (\ref{weierstrass}) is hyperelliptic.
We conclude that $\dim \calM_\Delta= \dim \calH_g = 2g - 1$ \cite[Example IV.5.5.5]{Hartshorne}.

One can decide if a nondegenerate polynomial $f$ defines a
hyperelliptic curve according to the following criterion, which also
appears in Koelman \cite[Lemma 3.2.9]{Koelman} with a more
complicated proof.

\begin{lem} \label{hyperelliptic}
Let $f \in k[x^{\pm},y^{\pm}]$ be nondegenerate and suppose $\#\inter(\Delta(f)\cap\Z^2)\geq 2$. Then $V(f)$ is hyperelliptic if and only if the interior
lattice points of $\Delta(f)$ are collinear.
\end{lem}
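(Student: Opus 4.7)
My plan is to use Proposition~\ref{internallatticegenus} to realize the canonical map explicitly. Writing the interior lattice points of $\Delta$ as $P_s=(i_s,j_s)$ for $s=1,\ldots,g$, the monomials $m_s=x^{i_s}y^{j_s}$ form a basis of $\mathcal{L}(K)$, so the canonical map is
\[
\phi: V\to\PP^{g-1},\qquad P\mapsto [m_1(P):\cdots:m_g(P)],
\]
and factors as $V\hookrightarrow \T_k^2 \xrightarrow{\mu} \PP^{g-1}$, where $\mu$ is the monomial map on the ambient torus. The workhorse is the classical characterization: $V$ of genus $g\geq 2$ is hyperelliptic if and only if $\phi$ has degree $2$, equivalently, if and only if $\phi(V)$ is the rational normal curve of degree $g-1$ in $\PP^{g-1}$.

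For the forward direction (collinear $\Rightarrow$ hyperelliptic), I would apply a lattice equivalence to arrange $P_s=(c,j_0+s-1)$, so that
\[
\phi(x,y) = [x^cy^{j_0}:\cdots:x^cy^{j_0+g-1}] = [1:y:\cdots:y^{g-1}]
\]
factors through the $(g-1)$-uple Veronese embedding of $\PP^1$. Hence $\phi(V)$ lies on a rational normal curve of degree $g-1$. Linear independence of $m_1,\ldots,m_g$ forces $\phi(V)$ to be nondegenerate in $\PP^{g-1}$, and so of degree at least $g-1$; it must therefore equal the rational normal curve, and $V$ is hyperelliptic.

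For the converse (non-collinear $\Rightarrow$ non-hyperelliptic), I note that by convexity of $\inter(\Delta)$ the polygon $\Delta':=\conv(P_1,\ldots,P_g)$ lies inside $\inter(\Delta)$, so $\Delta'\cap\Z^2=\{P_1,\ldots,P_g\}$. I then invoke the standard combinatorial fact that every two-dimensional lattice polygon admits a unimodular triangulation (a subdivision into lattice triangles of area $1/2$), and that the vertices of any such triangle form a $\Z$-basis of $\Z^2$ up to translation. Hence $P_1,\ldots,P_g$ affinely generate $\Z^2$, which makes $\mu$ birational onto its (two-dimensional toric) image. Consequently $\phi=\mu|_V$ has degree $1$, so $V$ is not hyperelliptic.

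The main obstacle lies in the converse, specifically the passage from \emph{non-collinear} to \emph{affinely generates $\Z^2$}. The Veronese factorization dispatches the forward direction at once, but in the converse one must use convexity of $\inter(\Delta)$ to harvest enough lattice points, beyond the nominal generators $P_s$, to span $\Z^2$; this is exactly what the unimodular-triangulation fact supplies.
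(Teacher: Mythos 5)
Your proof is correct and is essentially the paper's argument in geometric rather than field-theoretic language: the paper works with the subfield $L\subset k(V)$ generated by all quotients of the canonical monomials $x^iy^j$, $(i,j)\in\inter(\Delta)\cap\Z^2$ (i.e.\ the function field of the canonical image), and shows $L\cong k(\PP^1_k)$ exactly when those points are collinear, which matches your degree-of-$\phi$ criterion. Your unimodular-triangulation step supplies the detail the paper passes over when it says that for non-collinear interior points one may ``after a transformation \dots assume further that $(0,1),(1,0)\in\inter(\Delta)$,'' so that $L\supset k(x,y)=k(V)$.
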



\begin{proof}
We may assume that $\Delta = \Delta(f)$ has $g \geq 3$ interior lattice points,
since all curves of genus $2$ are hyperelliptic and any two points are collinear.

Let $L \subset k(V)$ be the subfield generated by all quotients of
functions in $\mathcal{L}(K)$, where $K$ is a canonical
divisor on $V$. Then $L$ does not depend on the choice of $K$,
and $L$ is isomorphic to the rational function field
$k(\mathbb{P}^1_k)$ if and only if $V$ is hyperelliptic.

We now show that $L \cong k(\PP^1_k)$ if and only if the interior
lattice points of $\Delta$ are collinear.  We may assume that
$(0,0)$ is in the interior of $\Delta$.  Then from Proposition
\ref{internallatticegenus}, we see that $L$ contains all monomials
of the form $x^iy^j$ for $(i,j) \in \inter(\Delta) \cap \Z^2$.  In
particular, if the interior lattice points of $\Delta$ are not
collinear then after a transformation we may assume
further that $(0,1),(1,0) \in \inter(\Delta)$, whence $L \supset
k(x,y)=k(V)$; and if they are collinear, then clearly $L \cong
k(\PP_k^1)$.  The result then follows.
\end{proof}

For this reason, we call a lattice polytope \emph{hyperelliptic}
if its interior lattice points are collinear.

A curve $V$ over $k$ of genus $g \geq 2$ is called
\emph{geometrically hyperelliptic} if $V_{\kbar} = V \times_k \kbar$
is hyperelliptic.  Every hyperelliptic curve is geometrically
hyperelliptic, but not conversely: if $V \to C \subset \PP^{g-1}_k$
is the canonical morphism, then $V$ is hyperelliptic if and only if
$C \cong \PP^1_k$. This latter condition is satisfied if and only if
$C(k) \neq \emptyset$, which is guaranteed when $k$ is finite,
when $V(k) \neq \emptyset$, and when $g$ is even.

\begin{lem} \label{geometricallyhyperellipticbutnotnondegenerate}
Let $V$ be a geometrically hyperelliptic curve which is nonhyperelliptic.
Then $V$ is not nondegenerate.
\end{lem}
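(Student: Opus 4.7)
The plan is to argue by contradiction: assume $V$ is $\Delta$-nondegenerate for some lattice polytope $\Delta$ with $g$ interior lattice points, and exhibit a $k$-rational degree-$2$ morphism $V \to \PP^1_k$, contradicting that $V$ is nonhyperelliptic. Because every genus-$2$ curve is hyperelliptic, we may assume $g \geq 3$. Base-changing to $\kbar$, the curve $V_{\kbar}$ is $\Delta$-nondegenerate and hyperelliptic, so Lemma~\ref{hyperelliptic} (over $\kbar$) forces the interior lattice points of $\Delta$ to be collinear. Since equivalences of lattice polytopes are induced by $k$-rational monomial changes of variables that preserve nondegeneracy, and since the interior lattice points on any fixed line form a consecutive run (by convexity of $\inter(\Delta)$), I can arrange after such an equivalence that the interior lattice points of $\Delta$ are precisely $(1,0), (2,0), \ldots, (g,0)$.

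Now invoke Proposition~\ref{internallatticegenus}: there is a $k$-rational canonical divisor $K_\Delta$ on $V$ with $k$-basis $\{x, x^2, \ldots, x^g\}$ for $\mathcal{L}(K_\Delta)$. The corresponding canonical morphism
\[ \phi_{K_\Delta} : V \to \PP^{g-1}_k, \qquad P \mapsto [x(P) : x(P)^2 : \cdots : x(P)^g], \]
factors over $k$ as $\phi_{K_\Delta} = \nu_{g-1} \circ \pi$, where $\pi : V \to \PP^1_k$ is the morphism attached to the $k$-rational function $x \in k(V)$, and $\nu_{g-1} : \PP^1_k \hookrightarrow \PP^{g-1}_k$ is the $(g-1)$-uple Veronese embedding realizing the rational normal curve. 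Since $\nu_{g-1}$ is a closed immersion defined over $k$, and $\phi_{K_\Delta}$ has degree $2$ over $\kbar$ (the defining property of the canonical map of a hyperelliptic curve), the morphism $\pi$ itself has degree $2$. Thus $\pi$ witnesses $V$ as a hyperelliptic curve over $k$, which is the desired contradiction.

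The heart of the argument, and the step to verify carefully, is the passage from collinearity of interior lattice points to a $k$-rational degree-$2$ map: the key observation is that the rational normal curve, which a priori only carries the hyperelliptic structure over $\kbar$, is in fact $\PP^1_k$ as a $k$-variety via the explicit Veronese parametrization. This is what upgrades the $\kbar$-hyperelliptic structure coming from Lemma~\ref{hyperelliptic} to a $k$-hyperelliptic one; the remaining bookkeeping (that equivalence of $\Delta$ can be done over $k$ and that consecutiveness follows from convexity of the interior) is routine.
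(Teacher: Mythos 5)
Your proof is correct and follows essentially the same route as the paper's: the paper applies Lemma~\ref{hyperelliptic} to $V_{\kbar}$ to get collinearity of the interior lattice points and then simply applies Lemma~\ref{hyperelliptic} a second time, over $k$, to conclude $V$ is hyperelliptic. Your explicit construction of the $k$-rational degree-$2$ map from the monomials $x,\dots,x^g$ and the Veronese factorization of the canonical map is just an unpacking of that second application (it is the ``collinear $\Rightarrow L \cong k(\PP^1_k)$'' direction in the proof of Lemma~\ref{hyperelliptic}), so the underlying idea is identical.
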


\begin{proof}
Suppose that $V$ is geometrically hyperelliptic and
$\Delta$-nondegenerate for some lattice polytope $\Delta$.  Then
applying Lemma~\ref{hyperelliptic} to $V_{\kbar}$, we see that the
interior lattice points of $\Delta$ are collinear. But then again by
Lemma~\ref{hyperelliptic} (now applied to $V$ itself), $V$ must be
hyperelliptic.
\end{proof}




\subsection*{$\mathbf{C_{ab}}$ curves}
Let $a,b \in \Z_{\geq 2}$ be coprime.  A \emph{$C_{ab}$ curve} is a
curve having a place with Weierstrass semigroup $a\Z_{\geq 0} +
b\Z_{\geq 0}$ (see Miura \cite{Miura}).
Any $C_{ab}$ curve is defined by a Weierstrass equation
\begin{equation} \label{cab}
f(x,y) = \sum_{\substack{i,j \in \N \\ ai + bj \leq ab}} c_{ij}
x^iy^j=0.
\end{equation}
with $c_{0a},c_{b0} \neq 0$.  By Lemma~\ref{fieldlowerbound}, if $\#k \geq 2(g + a + b - 2)$ then
we may assume that this polynomial is nondegenerate with respect to
its Newton polytope $\Delta_{ab}$:
\begin{center}
\begin{pspicture}(-0.5,-0.5)(1.5,1.5)
\pspolygon[fillstyle=solid,linecolor=black](0,0)(1.3,0)(0,0.9)
\psline{->}(-0.5,0)(1.75,0) \psline{->}(0,-0.5)(0,1.4)
\rput(-0.18,0.9){\small $a$} \rput(1.3,-0.25){\small $b$}
\end{pspicture}
\end{center}
Conversely, every curve given by a $\Delta_{ab}$-nondegenerate
polynomial is $C_{ab}$, and the unique place dominating the point at
projective infinity has Weierstrass semigroup $a\Z_{\geq
2}+b\Z_{\geq 2}$ (see Matsumoto \cite{Matsumoto}). Note that if $k$
is algebraically closed, the class of hyperelliptic curves of genus
$g$ coincides with the class of $C_{2,2g+1}$ curves.

The moduli space of all $C_{ab}$ curves (for varying $a$ and $b$) of
fixed genus $g$ is then a finite union of moduli spaces
$\calM_{\Delta_{ab}}$. One can show that its dimension equals $2g-1$
by an analysis of the Weierstrass semigroup, which has been done in
Rim--Vitulli \cite[Corollary~6.3]{RimVitulli}. This dimension equals
$\dim \calH_g = \dim \calM_{\Delta_{2,2g+1}}$ and in fact this is
the dominating part: in Example~\ref{brunsexamplecab} we will show
that $\dim \calM_{\Delta_{ab}} < 2g - 1$ if $a,b\geq 3$ and $g \geq
6$.



\section{Nondegeneracy of curves of genus three and four} \label{sectionthreefour}

\subsection*{Curves of genus $\mathbf{3}$}

A genus $3$ curve $V$ over $k$ is either geometrically hyperelliptic or
it canonically embeds in $\mathbb{P}^2_k$ as a plane quartic.

If $V$ is geometrically hyperelliptic, then $V$ may not be
hyperelliptic and hence (by
Lemma~\ref{geometricallyhyperellipticbutnotnondegenerate}) not
nondegenerate. For example, over $\Q$ there exist degree $2$ covers
of the imaginary circle having genus $3$. However, if $k$ is
finite or $V(k) \neq 0$ then every geometrically
hyperelliptic curve is hyperelliptic. If moreover $\# k \geq 17$
we can conclude that $V$ is nondegenerate. See
Section~\ref{sectionhyperelliptic} for more details.

If $V$ is embedded as a plane quartic, then assuming $\#k \geq 17$,
we can apply Lemma~\ref{fieldlowerbound} and see that $V$ is defined
by a $4\Sigma$-nondegenerate Laurent polynomial.

\subsection*{Curves of genus $\mathbf{4}$}

Let $V$ be a curve of genus $4$ over $k$. If $V$ is a geometrically
hyperelliptic curve then it is hyperelliptic, since the genus is
even; thus if $\# k \geq 17$ then $V$ is nondegenerate (see
Section~\ref{sectionhyperelliptic}). Assume therefore that $V$ is
nonhyperelliptic. Then it canonically embeds as a curve of degree
$6$ in $\PP^3_k$ which is the complete intersection of a unique
quadric surface $Q$ and a (non-unique) cubic surface $C$
\cite[Example IV.5.2.2]{Hartshorne}.

First, we note that if $V$ is $\Delta$-nondegenerate for some
nonhyperelliptic lattice polytope $\Delta \subset \R^2$, then $Q$ or
$C$ must have combinatorial origins as follows. Let
$\Delta^{(1)}=\conv(\inter(\Delta) \cap \Z^2)$. Up to equivalence,
there are three possible arrangements for these interior lattice
points:
\begin{center}
\begin{pspicture}(-1,-0.5)(1,1)
\pscircle[fillstyle=solid,fillcolor=black](0,0){0.08}
\pscircle[fillstyle=solid,fillcolor=black](0.4,0){0.08}
\pscircle[fillstyle=solid,fillcolor=black](0,0.4){0.08}
\pscircle[fillstyle=solid,fillcolor=black](0.4,0.4){0.08}
\rput(0.2,-0.4){\small (a)}
\end{pspicture}
\qquad
\begin{pspicture}(-1,-0.5)(1,1)
\pscircle[fillstyle=solid,fillcolor=black](0,0){0.08}
\pscircle[fillstyle=solid,fillcolor=black](0.4,0){0.08}
\pscircle[fillstyle=solid,fillcolor=black](-0.4,0){0.08}
\pscircle[fillstyle=solid,fillcolor=black](-0.4,0.4){0.08}
\rput(0,-0.4){\small (b)}
\end{pspicture}
\quad
\begin{pspicture}(-1,-0.5)(1,1)
\pscircle[fillstyle=solid,fillcolor=black](0,0.4){0.08}
\pscircle[fillstyle=solid,fillcolor=black](0.4,0.4){0.08}
\pscircle[fillstyle=solid,fillcolor=black](0.4,0.8){0.08}
\pscircle[fillstyle=solid,fillcolor=black](0.8,0){0.08}
\rput(0.4,-0.4){\small (c)}
\end{pspicture}
\end{center}
By Proposition \ref{internallatticegenus}, one verifies that $V$
canonically maps to $X^{(1)}=X(\Delta^{(1)})_k \subset
\mathbb{P}^3_k$. In (a), $X^{(1)}$ is nothing else but the Segre
product $\mathbb{P}^1_k \times \mathbb{P}^1_k$ defined by the
equation $xz = yw$ in $\mathbb{P}^3_k$, and by uniqueness it must
equal $Q$. For (b), $X^{(1)}$ is the singular quadric cone $yz =
w^2$, which again must equal $Q$. For (c), $X^{(1)}$ is the singular
cubic $xyz=w^3$, which must be an instance of $C$. Note that a curve
$V$ can be $\Delta$-nondegenerate with $\Delta^{(1)}$ as in (a) or
(b), but not both: whether $Q$ is smooth or not is intrinsic, since
$Q$ is unique. The third type (c) is special, and we leave it as an
exercise to show that the locus of curves of genus $4$ which
canonically lie on such a singular cubic surface is a codimension
$\geq 2$ subspace of $\calM_4$ (use the dimension bounds from
Section~\ref{SectionBounds}).

With these observations in mind, we work towards conditions under
which our given nonhyperelliptic genus $4$ curve $V$ is
nondegenerate. Suppose first that the quadric $Q$ has a (necessarily
$k$-rational) singular point $T$; then $V$ is called \emph{conical}.
This corresponds to the case where $V_{\kbar} = V \times_k \kbar$ has a unique $g^1_3$,
and represents a codimension 1 subscheme of $\calM_4$ \cite[Exercise IV.5.3]{Hartshorne}. If $Q(k)=\{T\}$
then $V$ cannot be nondegenerate with respect to any polytope with
$\Delta^{(1)}$ as in (a) or (b), since then $Q$ is not isomorphic to
either of the corresponding canonical quadric surfaces $X^{(1)}$. If
$Q(k) \supsetneq \{T\}$, which is guaranteed if $k$ is finite or if $V(k) \neq \emptyset$, then after a
choice of coordinates we can identify $Q$ with the weighted
projective space $\PP(1,2,1)$. Our degree $6$ curve $V$ then has an
equation of the form
\[ f(x,y,z) = y^3+a_2(x,z)y^2+a_4(x,z)y+a_6(x,z) \]
with $\deg a_i=i$; the equation is monic in $y$ because $T \not\in
V$. By Lemma~\ref{fieldlowerbound}, if $\#k \geq 23$ then we may
assume that $f(x,y,1)$ is nondegenerate with respect to its Newton
polytope $\Delta$ as follows:
\begin{center}
\begin{pspicture}(-0.5,-0.5)(1.7,1.2)
\pspolygon[fillstyle=solid,linecolor=black](0,0)(1.4,0)(0,0.7)
\psline{->}(-0.5,0)(1.7,0) \psline{->}(0,-0.5)(0,1.2)
\rput(-0.18,0.7){\small $3$} \rput(1.4,-0.25){\small $6$}
\end{pspicture}
\end{center}

Next, suppose that $Q$ is smooth; then $V$ is called
\emph{hyperboloidal}. This corresponds to the case where $V_{\kbar}$
has two $g^1_3$'s, and represents a dense subscheme of $\calM_4$
\cite[Exercise IV.5.3]{Hartshorne}.  If $Q \not\cong \PP^1_k \times
\PP^1_k$ (e.g. this will be the case whenever the discriminant of
$Q$ is nonsquare), then again $V$ cannot be nondegenerate with
respect to $\Delta$ with $\Delta^{(1)}$ as in (a) or (b). Therefore
suppose that $k$ is algebraically closed. Then $Q \cong
\mathbb{P}^2_k \times \mathbb{P}^2_k$ and $V$ can be projected to a
plane quintic with $2$ nodes \cite[Exercise IV.5.4]{Hartshorne}.

Consider the line connecting these nodes. Generically, it will intersect the nodes with multiplicity $2$, i.e. it will intersect
all branches transversally. By Bezout, the line will then intersect the curve transversally in one
other point.
This observation fits within
the following general phenomenon. Let $d \in \Z_{\geq 4}$, and
consider the polytope $\Delta=d\Sigma$ with up to three of its
angles pruned as follows:
\begin{equation} \label{prunesimplex}
\begin{minipage}[c]{1.25in}
\begin{pspicture}(-0.5,-0.5)(2.5,2.5)
\pspolygon[fillstyle=solid,linestyle=dashed](0,0.4)(0,1.7)(0.4,1.7)(1.7,0.4)(1.7,0)(0.4,0)
\pspolygon[fillstyle=none,linestyle=dashed](0,0)(2.1,0)(0,2.1)
\psline{-}(1.7,0.4)(0.4,1.7) \psline{->}(-0.5,0)(2.5,0)
\psline{->}(0,-0.5)(0,2.5) \rput(-0.18,2.1){\tiny $d$}
\rput(-0.27,1.7){\tiny $d$-$2$} \rput(-0.18,0.4){\tiny $2$}
\rput(2.1,-0.25){\tiny $d$} \rput(1.65,-0.25){\tiny $d$-$2$}
\rput(0.42,-0.25){\tiny $2$}
\end{pspicture}
\end{minipage}
\end{equation}
Let $f \in k[x,y]$ be a nondegenerate polynomial with Newton
polytope $\Delta$. If we prune no angle of $d\Sigma$, then
$X(\Delta)_k \cong \PP^2_k$ (it is the image of the $d$-uple
embedding) and $V(f)$ is a smooth plane curve of degree $d$. Pruning
an angle has the effect of blowing up $X(\Delta)_k$ at a coordinate
point; the image of $V(f)$ under the natural projection $X(\Delta)_k
\to \PP^2_k$ has a node at that point.  If we prune $m=2$ (resp.\ $m=3$) angles, then we likewise
obtain the blow-up of $\PP^2_k$ at $m$ points and the image of
$V(f)$ in $\PP^2_k$ has $m$ nodes.  Since $f$ is nondegenerate, the
line connecting any two of these nodes intersects the curve
transversally elsewhere, and due to the shape of $\Delta$
the intersection multiplicity at the nodes will be $2$. Conversely, every projective plane curve
having at most $3$ nodes such that the line connecting any two nodes
intersects the curve transversally (also at the nodes themselves), is nondegenerate. Indeed, after
an appropriate projective transformation, it will have a Newton
polytope as in (\ref{prunesimplex}). In particular, our hyperboloidal genus 4 curve $V$ will
be $\Delta$-nondegenerate, where $\Delta$ equals polytope (h.1) from Section~\ref{sectionsummary} below.

Exceptionally, the line connecting the two nodes of our quintic may be tangent to one of the
branches at a node. Using a similar reasoning, we conclude that $V$ is $\Delta$-nondegenerate, with
$\Delta$ equal to polytope (h.2) from Section~\ref{sectionsummary} below.

\begin{rmk} \label{genus4notnondeg}
As in Remark~\ref{ellipticnotnondeg}, an argument based on the index
shows that there exist genus $4$ curves which are not nondegenerate.
A result by Clark \cite{Clark2} states that for every $g \geq 2$,
there exists a number field $k$ and a genus $g$ curve $V$ over $k$,
such that the index of $V$ is equal to $2g-2$, the degree of the
canonical divisor.  In particular, there exists a genus $4$ curve
$V$ of index $6$. Such a curve cannot be nondegenerate. Indeed, for
each of the above arrangements (a)--(c), $X^{(1)}$ contains the line
$z = w = 0$, which cuts out an effective divisor on $V$ of degree
$3$ in cases (a) and (b) and degree $2$ in case (c).
\end{rmk}



\section{Nondegeneracy of low genus curves: summary}
\label{sectionsummary}

We now summarize the results of the preceding sections.
If $k$ is an algebraically closed field,
then every curve $V$ of genus at most $4$ over $k$ can be modeled by a nondegenerate polynomial having one of the
following as Newton polytope:
\begin{center}
\begin{pspicture}(-0.5,-0.7)(0.6,1.5)
\pspolygon[fillstyle=solid,linecolor=black](0,0)(0.4,0)(0,0.4)
\psline{->}(-0.5,0)(0.6,0) \psline{->}(0,-0.5)(0,1.2)
\rput(-0.18,0.4){\small $1$} \rput(0.4,-0.25){\small $1$}
\rput(0.2,-0.7){\footnotesize (a) genus $0$}
\end{pspicture}
\qquad
\begin{pspicture}(-0.5,-0.7)(1,1.5)
\pspolygon[fillstyle=solid,linecolor=black](0,0)(0.7,0)(0,0.4)
\psline{->}(-0.5,0)(1,0) \psline{->}(0,-0.5)(0,1.2)
\rput(-0.18,0.4){\small $2$} \rput(0.7,-0.25){\small $3$}
\rput(0.2,-0.7){\footnotesize (b) genus $1$}
\end{pspicture}
\quad
\begin{pspicture}(-0.5,-0.7)(1.3,1.5)
\pspolygon[fillstyle=solid,linecolor=black](0,0)(1,0)(0,0.4)
\psline{->}(-0.5,0)(1.3,0) \psline{->}(0,-0.5)(0,1.2)
\rput(-0.18,0.4){\small $2$} \rput(1,-0.25){\small $6$}
\rput(0.3,-0.7){\footnotesize (c) genus $2$}
\end{pspicture}
\qquad
\begin{pspicture}(-0.5,-0.7)(1.5,1.5)
\pspolygon[fillstyle=solid,linecolor=black](0,0)(1.2,0)(0,0.4)
\psline{->}(-0.5,0)(1.5,0) \psline{->}(0,-0.5)(0,1.2)
\rput(-0.18,0.4){\small $2$} \rput(1.2,-0.25){\small $8$}
\rput(0.65,-0.7){\footnotesize (d) genus $3$ hyperelliptic}
\end{pspicture}
\quad
\begin{pspicture}(-1.2,-0.7)(1.2,1.5)
\pspolygon[fillstyle=solid,linecolor=black](0,0)(0.7,0)(0,0.7)
\psline{->}(-0.5,0)(1.2,0) \psline{->}(0,-0.5)(0,1.2)
\rput(-0.18,0.7){\small $4$} \rput(0.7,-0.25){\small $4$}
\rput(0.65,-0.7){\footnotesize (e) genus $3$ planar}
\end{pspicture}
\end{center}
\vspace{0.3cm}
\begin{center}
\begin{pspicture}(-0.5,-0.7)(1.8,1.2)
\pspolygon[fillstyle=solid,linecolor=black](0,0)(1.5,0)(0,0.4)
\psline{->}(-0.5,0)(1.8,0) \psline{->}(0,-0.5)(0,1.2)
\rput(-0.18,0.4){\small $2$} \rput(1.4,-0.25){\small $10$}
\rput(0.65,-0.7){\footnotesize (f) genus $4$ hyperelliptic}
\end{pspicture}
\qquad \quad
\begin{pspicture}(-0.5,-0.7)(1.7,1.2)
\pspolygon[fillstyle=solid,linecolor=black](0,0)(1.4,0)(0,0.7)
\psline{->}(-0.5,0)(1.7,0) \psline{->}(0,-0.5)(0,1.2)
\rput(-0.18,0.7){\small $3$} \rput(1.4,-0.25){\small $6$}
\rput(0.6,-0.7){\footnotesize (g) genus $4$ conical}
\end{pspicture}
\quad \
\begin{pspicture}(-1.2,-0.7)(3.9,1.2)
\pspolygon[fillstyle=solid,linecolor=black](0,0)(0.7,0)(0.7,0.466)(0.466,0.7)(0,0.7)
\psline{->}(-0.5,0)(1.2,0) \psline{->}(0,-0.5)(0,1.2)
\rput(-0.18,0.7){\small $3$} \rput(0.7,-0.25){\small $3$}
\rput(0.9,0.9){\footnotesize (1)}
\pspolygon[fillstyle=solid,linecolor=black](2.7,0)(3.4,0)(3.4,0.466)(2.933,0.7)(2.7,0.7)
\psline{->}(2.2,0)(3.9,0) \psline{->}(2.7,-0.5)(2.7,1.2)
\rput(2.52,0.7){\small $3$} \rput(3.4,-0.25){\small $3$}
\rput(3.6,0.9){\footnotesize (2)}
\rput(1.6,-0.7){\footnotesize (h) genus $4$ hyperboloidal}
\end{pspicture} \\
\
\end{center}
Moreover, these classes are disjoint.  For the polytopes (c)--(h.1),
we have $\dim \calM_\Delta=3,5,6,7,8,9$, respectively.  All
hyperelliptic curves and $C_{ab}$ curves are nondegenerate.

For an arbitrary perfect field $k$, if $V$ is not hyperboloidal and has
genus at most $4$, then $V$ is nondegenerate whenever $k$ is a
sufficiently large finite field, or when $k$ is infinite and $V(k) \neq
\emptyset$; for the former, the condition $\#k \geq 23$ is
sufficient but most likely not optimal (see Remark~\ref{project}).



\begin{rmk}
We can situate the nonhyperelliptic $C_{ab}$ curves that lie in this
classification. In genus $3$, we have $C_{3,4}$ curves, which have a
smooth model in $\mathbb{P}^2_k$, since $\Delta_{3,4}$ is
nonhyperelliptic. In genus $4$, we have $C_{3,5}$ curves, which are
conical: this can be seen by analyzing the interior lattice points
of $\Delta_{3,5}$, as in Section~\ref{sectionthreefour}.
\end{rmk}

\begin{rmk} \label{project}
In case $\# k < \infty$, we proved (without further condition on $\#k$)
that if $V$ is not hyperboloidal then it can be modeled by a
polynomial $f \in k[x,y]$ with Newton polytope contained in one of
the polytopes (a)--(g). The condition on $\#k$ then came along with
an application of Lemma~\ref{fieldlowerbound} to deduce
nondegeneracy. In the $g=1$ case, we got rid of this condition by
using non-linear transformations (completing the square) and allowing smaller polytopes.
Very likely, similar techniques can be used to improve the bounds on
$\#k$ in genera $2 \leq g \leq 4$. It would be interesting to
investigate this problem more completely and to even produce the
finite list of all curves of genus $\leq 3$ over a finite field of odd characteristic that
are not nondegenerate.
\end{rmk}


\section{An upper bound for $\dim \Mgnd$} \label{SectionBounds}


From now on, we assume $k = \overline{k}$. In this section, we prepare for a proof of Theorem
\ref{reallymaintheorem}, which gives an upper bound for $\dim \Mgnd$ in terms of $g$.

For a lattice polytope $\Delta \subset \mathbb{Z}^2$ with
$g \geq 2$ interior lattice points, we sharpen the obvious upper bound
$\dim \calM_\Delta \leq \dim M_\Delta =\#(\Delta \cap \Z^2)-1$
(see (\ref{dimensionremark})) by incorporating the action of the
automorphism group of $X(\Delta)_k$, which has been explicitly
described by Bruns and Gubeladze \cite[Section 5]{BrunsGubeladze}.
In
Sections~\ref{sectionboundtermsofgenus}--\ref{generalpolytopessection}
we then work towards a bound in terms of $g$, following ideas of
Haase and Schicho \cite{HaaseSchicho}.


The automorphisms of $X(\Delta)_k =
\Proj k[\Delta] \hookrightarrow \PP^{\#(\Delta \cap \Z^2)-1}$ correspond to the
graded $k$-algebra automorphisms of $k[\Delta]$, and admit a
combinatorial description as follows.

\begin{defn}
A nonzero vector $v \in \Z^2$ is a \emph{column vector} of $\Delta$ if there exists a facet $\tau \subset \Delta$ (the \emph{base facet}) such that
\[ v+((\Delta \setminus \tau) \cap \Z^2) \subset \Delta. \]
\end{defn}

We denote by $c(\Delta)$ the number of column vectors of $\Delta$.

\begin{exm}
Any multiple
of the standard $2$-simplex $\Sigma$ has $6$ column vectors. The
octagonal polytope below shows that a polytope may have no column
vectors.

\begin{center} \label{colvecs}
\ \\
\begin{pspicture}(0,0)(10.4,3.4)
\psgrid[gridcolor=lightgray,subgridcolor=lightgray,gridwidth=0.01,subgridwidth=0.01,gridlabels=0,subgriddiv=2]
\pspolygon[fillstyle=none,linecolor=black](0.5,0.5)(4.5,0.5)(4.5,2)(2.5,2.5)
\psline[linecolor=red]{->}(3,1)(3,0.5)
\psline[linecolor=red]{->}(3,1)(2.5,0.5)
\psline[linecolor=red]{->}(2,1.5)(1.5,1.5)
\pspolygon[fillstyle=none,linecolor=black](5,0.5)(7.5,0.5)(5,3)
\psline[linecolor=red]{->}(6,1)(6,0.5)
\psline[linecolor=red]{->}(6,1)(6.5,0.5)
\psline[linecolor=red]{->}(5.5,1.5)(5,1.5)
\psline[linecolor=red]{->}(5.5,1.5)(5,2)
\psline[linecolor=red]{->}(6,1.5)(6.5,1.5)
\psline[linecolor=red]{->}(6,1.5)(6,2)
\pspolygon[fillstyle=none,linecolor=black](8,1)(8.5,0.5)(9.5,0.5)(10,1)(10,2)(9.5,2.5)(8.5,2.5)(8,2)
\end{pspicture} \\
\textbf{Figure \ref{colvecs}}: Column vectors of some lattice polytopes \\
\ \\
\end{center}
\end{exm}

The dimension of the automorphism group $\Aut (X(\Delta)_k)$ is then
determined as follows.

\begin{prop}[Bruns--Gubeladze {\cite[Theorem 5.3.2]{BrunsGubeladze}}] \label{brunsgub}
We have
\[ \dim \Aut (X(\Delta)_k)=c(\Delta)+2. \]
\end{prop}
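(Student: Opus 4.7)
The plan is to identify the identity component $\Aut^0(X(\Delta)_k)$ explicitly, show it has dimension $c(\Delta) + 2$, and conclude using that the component group $\Aut/\Aut^0$ is finite.

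First I would make the reduction to linear algebra. Since $k[\Delta]$ is generated in degree one and $X(\Delta)_k = \Proj k[\Delta]$ is embedded projectively normally, every automorphism of $X(\Delta)_k$ lifts to a graded $k$-algebra automorphism of $k[\Delta]$, which in turn is determined by its action on the degree-one piece $V = \bigoplus_{(i,j) \in A} k \cdot x^iy^jt$. I would then exhibit two kinds of automorphism:
\begin{enumroman}
\item The torus $\T^2_k$ acts by $(\lambda,\mu)\cdot x^iy^jt = \lambda^i\mu^j\,x^iy^jt$, giving a $2$-dimensional subgroup.
\item For each column vector $v$ with base facet $\tau$, define $\rho_v : \G_{a,k} \to \Aut(k[\Delta])$ by
\[ \rho_v(s)(x^iy^jt) = \begin{cases} x^iy^jt + s\,x^{(i,j)+v}t, & (i,j) \in (\Delta \setminus \tau) \cap \Z^2, \\ x^iy^jt, & (i,j) \in \tau \cap \Z^2. \end{cases} \]
The column vector condition $v+((\Delta \setminus \tau)\cap\Z^2) \subset \Delta$ guarantees the right-hand side lies in $V$; one then checks, using that $\tau$ is a facet cut out by an affine functional $u\cdot(X,Y) = a$ with $u\cdot v = -1$, that $\rho_v(s)$ is the exponential of a weight-$v$ derivation that annihilates the monomials supported on $\tau$, and hence extends multiplicatively to $k[\Delta]$.
\end{enumroman}

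Next I would verify linear independence of the infinitesimal generators: the two Euler derivations of the torus span the weight-$(0,0)$ piece of $\operatorname{Lie}(\Aut^0)$, while each $\rho_v$ contributes a nonzero derivation of weight $v$, and distinct column vectors correspond to distinct $\T^2_k$-weights. This produces $c(\Delta) + 2$ linearly independent tangent vectors at the identity and gives the lower bound $\dim \Aut^0(X(\Delta)_k) \geq c(\Delta) + 2$.

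The main obstacle is the matching upper bound, namely that no other infinitesimal automorphisms exist. The idea I would pursue, following Bruns--Gubeladze, is to view $\Aut^0(X(\Delta)_k)$ as a connected linear algebraic group containing $\T^2_k$ as a maximal torus, decompose its Lie algebra into $\T^2_k$-weight spaces, and argue by a direct analysis of the compatibility condition with multiplication in $k[\Delta]$ that any nonzero weight $v$ occurring in the decomposition must be a column vector: the locus of monomials annihilated by the weight-$v$ derivation is forced by $\T^2_k$-equivariance to be the intersection of $A$ with a facet $\tau$, and for the resulting shift to land back in $A$ one recovers exactly the condition $v + ((\Delta\setminus\tau)\cap\Z^2)\subset\Delta$. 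This weight-by-weight analysis is the technical heart of the argument. Finally, since $\Aut(X(\Delta)_k)/\Aut^0(X(\Delta)_k)$ is a finite subquotient of the lattice-automorphism group of $\Delta$, it contributes nothing to the dimension and we conclude $\dim \Aut(X(\Delta)_k) = c(\Delta)+2$.
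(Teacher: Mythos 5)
Your proposal follows essentially the same route as the paper's sketch (and the Bruns--Gubeladze original): the torus contributes two dimensions, each column vector contributes a one-parameter unipotent subgroup, and the matching upper bound is deferred to a weight-space analysis showing that no other roots occur, with the finite component group contributing nothing. One caveat: your displayed formula for $\rho_v(s)$ is only the first-order truncation of the exponential of the weight-$v$ derivation---the actual automorphism is the substitution $y \mapsto y + s$ in normalized coordinates (as in the paper), which carries binomial-coefficient terms for lattice points at lattice distance $\geq 2$ from the base facet $\tau$, and the truncated linear map you wrote is not multiplicative in general.
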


\begin{proof}[Proof sketch]
One begins with the $2$-dimensional subgroup of $\Aut(X(\Delta)_k)$
induced by the inclusion $\Aut(\T^2) \hookrightarrow
\Aut(X(\Delta)_k)$. On the $k[\Delta]$-side, this corresponds to the
graded automorphisms induced by $(x,y) \mapsto (\lambda x, \mu y)$ for $\lambda, \mu \in k^{\ast 2}$.

Next, column
vectors of $\Delta$ correspond to automorphisms of $X(\Delta)_k$ in
the following way. If $v$ is a column vector, modulo equivalence
we may assume that $v=(0,-1)$, that the
base facet is supported on the $X$-axis, and that $\Delta$ is
contained in the positive quadrant $\R_{\geq 0}^2$.  Let $f(x,y) \in
k[x,y]$ be supported on $\Delta$. Since the vector $v=(0,-1)$ is a
column vector, the polynomial $f(x,y + \lambda)$ will again be
supported on $\Delta$, for any $\lambda \in k$. Hence $v$ induces a
family of graded automorphisms $k[\Delta] \to k[\Delta]$,
corresponding to a one-dimensional subgroup of $\Aut (X(\Delta)_k)$.


It then remains to show that these subgroups are algebraically
independent from each other and from $\Aut(\T^2)$, and that together
they generate $\Aut(X(\Delta)_k)$ (after including the finitely many
automorphisms coming from $\mathbb{Z}$-affine transformations mapping
$\Delta$ to itself).
\end{proof}


Using the fact that a curve of genus $g
\geq 2$ has finitely many automorphisms we obtain the following
corollary. We leave the details as an exercise.

\begin{cor} \label{boundcolumn}
We have $\dim \calM_\Delta \leq m(\Delta):=\#(\Delta \cap
\Z^2)-c(\Delta)-3$.
\end{cor}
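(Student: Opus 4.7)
The plan is to exploit the action of the algebraic group $G = \Aut(X(\Delta)_k)$ on the parameter space $M_\Delta$ in order to cut down the dimension of $M_\Delta$ by $\dim G = c(\Delta) + 2$ (Proposition~\ref{brunsgub}).

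First I would set up the action. Since $M_\Delta$ sits inside $\PP^{\#(\Delta \cap \Z^2)-1} = \Proj R_\Delta$ as the complement of the hypersurface $E_A = 0$, and $G$ acts linearly on this ambient projective space (via its action on the linear system of hyperplane sections of $X(\Delta)_k$), the action restricts to $M_\Delta$ because $\Delta$-nondegeneracy is an intrinsic geometric property of the curve $V(f) \subset X(\Delta)_k$, preserved by automorphisms of $X(\Delta)_k$. Moreover, if $\phi \in G$ and $f' = \phi \cdot f$, then $\phi$ restricts to an isomorphism $V(f) \xrightarrow{\sim} V(f')$, so $h_\Delta(f) = h_\Delta(f')$; the fibers of $h_\Delta$ are therefore $G$-stable.

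Next I would apply the fiber dimension theorem to $h_\Delta \colon M_\Delta \to \calM_\Delta$. For a generic closed point $p$ of the image, the fiber $h_\Delta^{-1}(p)$ has dimension exactly $\dim M_\Delta - \dim \calM_\Delta$. Picking $f$ in such a generic fiber, the orbit $G \cdot f$ is contained in that fiber, yielding
\[
\dim \calM_\Delta \leq \dim M_\Delta - \dim(G \cdot f) = \dim M_\Delta - \dim G + \dim \mathrm{Stab}_G(f).
\]
Combined with (\ref{dimensionremark}) and Proposition~\ref{brunsgub}, the corollary reduces to verifying that $\dim \mathrm{Stab}_G(f) = 0$ for generic $f \in M_\Delta$.

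The main (and only nontrivial) step is this stabilizer computation. An element $\phi \in \mathrm{Stab}_G(f)$ preserves $V(f)$ setwise and thus restricts to an element of $\Aut(V(f))$, giving a homomorphism $\mathrm{Stab}_G(f) \to \Aut(V(f))$. The target is finite by the classical Hurwitz bound, since $V(f)$ has genus $g \geq 2$ (Proposition~\ref{internallatticegenus}). It remains to show the kernel is finite. An element of the kernel is an automorphism of $X(\Delta)_k$ which fixes the curve $V(f)$ pointwise; since $V(f)$ is a hyperplane section of $X(\Delta)_k$ in the embedding $X(\Delta)_k \hookrightarrow \PP^{\#(\Delta \cap \Z^2)-1}$ and, for generic $f$, spans the ambient projective space (indeed $V(f)$ is not contained in any other hyperplane, as this would force $\Delta(f)$ to be strictly smaller than $\Delta$), the induced linear automorphism of $\PP^{\#(\Delta \cap \Z^2)-1}$ must be the identity, and hence so is $\phi$. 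Thus the kernel is trivial, $\mathrm{Stab}_G(f)$ is finite, and the stated inequality $\dim \calM_\Delta \leq \#(\Delta \cap \Z^2) - c(\Delta) - 3 = m(\Delta)$ follows.
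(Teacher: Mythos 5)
Your strategy is exactly the one the paper intends: the paper derives this corollary from Proposition~\ref{brunsgub} together with the finiteness of the automorphism group of a genus $g \geq 2$ curve, and explicitly leaves the details as an exercise; your orbit--fiber argument is the standard way to fill them in, and the reduction to showing $\dim \mathrm{Stab}_G(f) = 0$ is correct. There is, however, one inaccurate step in the stabilizer computation. The curve $V(f)$ does \emph{not} span $\PP^{\#(\Delta \cap \Z^2)-1}$: by construction it is contained in the hyperplane $H_f : \sum c_{ij} t_{ij} = 0$. What your irreducibility observation actually yields is that $H_f$ is the \emph{unique} hyperplane containing $V(f)$ (if $V(f) \subset H_{f'}$ then $f \mid f'$ in $k[x^{\pm 1},y^{\pm 1}]$, so $f'$ is a monomial times $f$, hence a scalar multiple since both are supported in $\Delta$ and $\Delta(f)=\Delta$ is $2$-dimensional); thus $V(f)$ spans $H_f$ but no more. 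Consequently a nontrivial linear automorphism of $\PP^{\#(\Delta \cap \Z^2)-1}$ whose fixed locus is $H_f$ together with an isolated center also fixes $V(f)$ pointwise, so your conclusion that the kernel of $\mathrm{Stab}_G(f) \to \Aut(V(f))$ is trivial does not follow as written. The gap is easily repaired: a positive-dimensional kernel would contain a one-parameter subgroup of $\Aut(X(\Delta)_k)^\circ$ fixing $V(f)$ pointwise; such a subgroup is conjugate into the maximal torus $\T^2_k$ or into the unipotent root subgroups of Proposition~\ref{brunsgub}, none of which fix pointwise any curve meeting the dense torus. Hence the kernel is still finite, $\mathrm{Stab}_G(f)$ is finite, and the inequality $\dim \calM_\Delta \leq \#(\Delta \cap \Z^2) - c(\Delta) - 3$ stands.
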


\begin{exa} \label{boundsharp}
Let $\Delta=\conv(\{(0,0),(2g+2,0),(0,2)\})$ as in
section~\ref{sectionhyperelliptic}, so that $\dim \mathcal{M}_\Delta =
2g - 1$. One verifies that $c(\Delta)=g + 3$, so the upper
bound in Corollary~\ref{boundcolumn} reads
$m(\Delta)=(3g+6)-(g+3)-3=2g$ which is indeed bigger than $2g-1$. In the case
$\Delta=\conv(\{(0,0),(2g+1,0),(0,2)\})$, corresponding to
Weierstrass models having a unique point at infinity, one exactly
finds $m(\Delta) = 2g-1$. So in this case, the bound is sharp. It is
easy to verify that the bound is also sharp if $\Delta=d\Sigma$
$d \in \mathbb{Z}_{\geq 4}$; then $\dim
\calM_\Delta$ reads $(d+1)(d+2)/2-9=g+3d-9 \leq 2g$.
\end{exa}

\begin{rmk}
Temporarily looking ahead to Section~\ref{morecurves}, we note that
the lattice polytopes $d\Sigma$ ($d \in \mathbb{Z}_{\geq 4}$) are
examples of so-called \emph{maximal polytopes} (see
Section~\ref{maximalpolytopessection} for the definition). For this class of polytopes,
Corollary~\ref{boundcolumn} will always give a sharp estimate. This
is the main result of Koelman's thesis \cite[Theorem
2.5.12]{Koelman}.
\end{rmk}

\begin{exa} \label{brunsexamplecab}
We now use Corollary~\ref{boundcolumn} to show that the dimension of
the moduli space of nonhyperelliptic $C_{ab}$ curves of genus $g$ (where $a$ and $b$ may vary)
has dimension strictly smaller than $2g-1=\dim
\calH_g$ whenever $g \geq 6$.  Consider $\Delta_{ab} =
\text{Conv}\{(0,a),(b,0),(0,0)\}$ with $a,b \in \Z_{\geq 3}$
coprime. Then we have
\[ g = (a-1)(b-1)/2, \quad \# (\Delta \cap \mathbb{Z}^2) = g + a + b + 1, \]
and the set of column vectors is given by
\[ \{ (n, -1) : n = 0,\dots,\lfloor b/a \rfloor\} \cup \{(-1, m) : m = 0, \dots,\lfloor a/b \rfloor\}. \]
Suppose without loss of generality that $a < b$. Then $a$ is bounded
by $\sqrt{2g} + 1$. Corollary~\ref{boundcolumn} yields
\[ \dim \calM_\Delta \leq m(\Delta)= g + a + b + 1 - \left(\left\lfloor\frac{b}{a}\right\rfloor+2\right) - 3
< a + \frac{2g-1}{a} + g - 2. \] As a (real) function of $a$, this upper bound has a unique minimum at
$a=\sqrt{2g-1}$. Therefore, to deduce that it is strictly smaller
than $2g-1$ for all $a \in [3,\sqrt{2g} + 1]$, it suffices to verify
so for the boundary values $a=3$ and $a=\sqrt{2g}+1$, which is indeed
the case if $g \geq 6$.
\end{exa}

\section{A bound in terms of the genus}
\label{sectionboundtermsofgenus}

Throughout the rest of this article, we will employ the following
notation. Let $\Delta^{(1)}$ be the convex hull of the interior
lattice points of $\Delta$.  Let $r$ (resp.\ $r^{(1)}$) denote the
number of lattice points on the boundary of $\Delta$ (resp.\
$\Delta^{(1)}$), and let $g^{(1)}$ denote the number of interior
lattice points in $\Delta^{(1)}$, so that $g=g^{(1)}+r^{(1)}$.

We now prove the following preliminary bound.

\begin{prop} \label{maintheorem}
If $\Delta$ has at least $g \geq 2$ interior lattice points, then
$\dim \calM_\Delta \leq 2g+3$.
\end{prop}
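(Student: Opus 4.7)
The plan is to apply Corollary~\ref{boundcolumn}, which gives
\[ \dim \calM_\Delta \leq m(\Delta) = \#(\Delta \cap \Z^2) - c(\Delta) - 3 = g + r - c(\Delta) - 3, \]
so the proposition reduces to the purely combinatorial inequality
\[ r \leq g + c(\Delta) + 6. \]
I would then split the argument according to the dimension of $\Delta^{(1)}$.

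If $\dim \Delta^{(1)} \leq 1$, then the interior lattice points of $\Delta$ are collinear, so $\Delta$ is hyperelliptic in the sense of Lemma~\ref{hyperelliptic} and every $\Delta$-nondegenerate curve is hyperelliptic. In that case Section~\ref{sectionhyperelliptic} already provides the stronger bound $\dim \calM_\Delta \leq \dim \calH_g = 2g - 1 < 2g + 3$, so we are done.

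Suppose instead that $\dim \Delta^{(1)} = 2$. I would treat $\Delta$ edge by edge in the spirit of Haase--Schicho. For each edge $\tau \subset \Delta$, choose a $\Z$-affine transformation placing $\tau$ on the $X$-axis with $\Delta$ in the upper half-plane, and introduce the \emph{shadow} $\sigma(\tau) := \Delta^{(1)} \cap \{Y = 1\}$, which is either an edge of $\Delta^{(1)}$ parallel to $\tau$, a single lattice point, or empty. The key lemma I would prove is
\[ \ell(\tau) \leq \tilde{\ell}(\sigma(\tau)) + 2 + c_\tau, \]
where $c_\tau$ counts the column vectors of $\Delta$ having $\tau$ as base facet, and $\tilde{\ell}(\sigma)$ is the lattice length if $\sigma$ is an edge, $0$ if $\sigma$ is a vertex, and $-1$ if $\sigma$ is empty. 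The underlying geometric fact is that every lattice point on $\tau$ lying outside the vertical strip over $\sigma(\tau)$ is forced to give rise to a column vector of the form $(p, -1)$ with base facet $\tau$, thanks to the structure of $\Delta$ above $\tau$ staying narrow.

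Summing this inequality over all edges of $\Delta$, and using that each edge of $\Delta^{(1)}$ is the shadow of at most one edge of $\Delta$, I would obtain
\[ r = \sum_\tau \ell(\tau) \leq r^{(1)} + 2\, e(\Delta) + c(\Delta), \]
where $e(\Delta)$ is the number of edges of $\Delta$. A dual shadow argument bounds $e(\Delta)$ by the edge count of $\Delta^{(1)}$ up to a small constant, and together with Scott's inequality $r^{(1)} \leq 2g^{(1)} + 7$ (for $g^{(1)} \geq 1$, with the unique extremal case $\Delta^{(1)} \simeq 3\Sigma$) and a short direct treatment of the finite list of two-dimensional lattice polygons with $g^{(1)} = 0$, this yields $r \leq g + c(\Delta) + 6$ after applying $g = g^{(1)} + r^{(1)}$. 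The hardest step will be the bookkeeping around corners of $\Delta$: making sure that column vectors contributed by two adjacent edges are not double-counted, and that the empty- and vertex-shadow cases (which correspond to pointy corners of $\Delta$ jutting out past $\Delta^{(1)}$) are absorbed uniformly into the $+2$ term of the edgewise estimate.
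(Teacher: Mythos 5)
Your starting point (Corollary~\ref{boundcolumn} together with the reduction to nonhyperelliptic $\Delta$ via Lemma~\ref{hyperelliptic}) matches the paper, but the combinatorial core of your argument does not close. After summing your edgewise estimate you arrive at $r \leq r^{(1)} + 2e(\Delta) + c(\Delta)$, and the additive term $2e(\Delta)$ is fatal: it grows with the number of edges of $\Delta$, whereas the inequality you need, $r \leq g^{(1)} + r^{(1)} + c(\Delta) + 6$, allows only a bounded excess. Concretely, for the hexagon $\Delta = \conv\{(2,0),(0,2),(-2,2),(-2,0),(0,-2),(2,-2)\}$ one has $g = 7$, $r = 12$, $r^{(1)} = 6$, $g^{(1)} = 1$, $c(\Delta) = 0$ and $e(\Delta) = 6$, so your chain gives only $r \leq 18$, i.e.\ $m(\Delta) \leq 2g+8$, and neither Scott's inequality (here $r^{(1)} = 6 \leq 9$, already satisfied) nor any bound of $e(\Delta)$ by $e(\Delta^{(1)}) = 6$ recovers the loss. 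The statement you are implicitly after, $r - r^{(1)} - c(\Delta) \leq 6$, is a \emph{global} fact: in this paper (Lemma~\ref{boundbminc}) it comes from the Poonen--Rodriguez-Villegas identity $\ell(\calP) + \ell(\calP\spcheck) = 12$ (Theorem~\ref{Poonen}) applied to the legal loop of corner vectors of $\Delta$, and no per-edge estimate with a fixed additive constant $+2$ can see this winding-number constraint. This is precisely why the paper instead routes the proof through the global Haase--Schicho bound $r \leq r^{(1)} + 8$ (valid once $\Delta \neq d\Sigma$), which already yields $m(\Delta) \leq 2g + 5 - g^{(1)}$ with no column vectors needed when $g^{(1)} \geq 2$.

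Two further problems. First, there is no ``finite list of two-dimensional lattice polygons with $g^{(1)} = 0$'': polygons without interior lattice points form an infinite family ($2\Sigma$ together with arbitrarily long polygons caught between two lattice lines at distance $1$); the finite list of $16$ polytopes is for $g^{(1)} = 1$. The paper accordingly handles $g^{(1)} = 0$ not combinatorially but geometrically, via Lemma~\ref{nointeriorimpliestrigonal} and the classical fact that trigonal curves have $2g+1$ moduli, and reserves the finite check for $g^{(1)} = 1$. Second, your claimed correspondence between lattice points of $\tau$ outside the shadow and column vectors with base facet $\tau$ is essentially the bijection used in the proof of Lemma~\ref{boundbminc}(b), but that refinement is established only for \emph{maximal} $\Delta$, and the remark following Corollary~\ref{max2gplus1} points out (Figure~\ref{fig2gplus2}(c)) that removing boundary points can destroy column vectors faster than it shrinks $r$. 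Any argument in your style must therefore treat nonmaximal polytopes separately, as the paper does in Section~\ref{generalpolytopessection}.
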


\begin{proof}
We may assume that $\Delta$ is nonhyperelliptic, because otherwise
$\dim \mathcal{M}_\Delta \leq 2g-1$ by Lemma~\ref{hyperelliptic}. We
may also assume that $\Delta^{(1)}$ is not a multiple of $\Sigma$,
since otherwise $\Delta$-nondegenerate curves are canonically
embedded in $X(\Delta^{(1)})_k \cong \mathbb{P}^2_k$ using
Proposition~\ref{internallatticegenus}; then from
Example~\ref{boundsharp} it follows that $\dim \calM_\Delta \leq
2g$.

An upper bound for $\dim \mathcal{M}_\Delta$ in terms of $g$ then
follows from a lemma by Haase and Schicho \cite[Lemma
12]{HaaseSchicho}, who proved that $r \leq r^{(1)} + 9$, in which
equality holds if and only if $\Delta=d\Sigma$ for some $d \in
\Z_{\geq 4}$ (a case which we have excluded). Hence
\begin{equation} \label{boundHS}
\# (\Delta \cap \mathbb{Z}^2) = g + r \leq g + r^{(1)} + 8 = 2g + 8 - g^{(1)},
\end{equation}
and thus
\begin{equation} \label{boundwithoutc}
 \dim \mathcal{M}_\Delta \leq m(\Delta)=\#(\Delta \cap
\mathbb{Z}^2) - c(\Delta) - 3 \leq 2g + 5 - c(\Delta) - g^{(1)}
\leq 2g+5.
\end{equation}
This bound improves to $2g+3$ if $g^{(1)} \geq 2$, so we remain with
two cases: $g^{(1)} = 0$ and $g^{(1)} = 1$.

 Suppose first that $g^{(1)}=0$. Then by
Lemma~\ref{nointeriorimpliestrigonal} below, any
$\Delta$-nondegenerate curve is either a smooth plane quintic
(excluded), or a trigonal curve. Since the moduli space of trigonal
curves has dimension $2g+1$ (a classical result, see also
Section~\ref{morecurves} below), the bound holds.

Next, suppose that $g^{(1)}=1$. Then, up to equivalence, there are
only $16$ possibilities for $\Delta^{(1)}$, which are listed in
\cite[Figure 2]{PoonenRodriguez} or in the appendix below. Hence,
there are only finitely many possibilities for $\Delta$, and for
each of these polytopes we find that $\#(\Delta \cap
\Z^2)-c(\Delta)-3 \leq 2g+2$.
\end{proof}

In fact, for all but the $5$ polytopes in Figure \ref{fig2gplus2}
(up to equivalence), we find that the stronger bound $\#(\Delta
\cap \Z^2)-c(\Delta)-3 \leq 2g+1$ holds.

\begin{center} \label{fig2gplus2}
\ \\
\begin{pspicture}(-0.5,-0.5)(12,3)
\psgrid[gridcolor=lightgray,subgridcolor=lightgray,gridwidth=0.01,subgridwidth=0.01,gridlabels=0,subgriddiv=2]
\pspolygon[fillstyle=none,linecolor=black,linestyle=solid](-0.5,1.5)(-0.5,2.5)(0.5,2.5)(1.5,1.5)(1.5,0.5)(0.5,0.5)
\pspolygon[fillstyle=none,linecolor=black,linestyle=dashed](0,1.5)(0,2)(0.5,2)(1,1.5)(1,1)(0.5,1)
\rput(0.5,0){(a)}
\pspolygon[fillstyle=none,linecolor=black,linestyle=solid](2,0.5)(2,1.5)(3,2.5)(4,2.5)(4,1)(3.5,0.5)
\pspolygon[fillstyle=none,linecolor=black,linestyle=dashed](2.5,1)(2.5,1.5)(3,2)(3.5,2)(3.5,1)
\rput(3,0){(b)}
\pspolygon[fillstyle=none,linecolor=black,linestyle=solid](5,0.5)(6.5,0.5)(6.5,2)(6,2.5)(4.5,2.5)(4.5,1)
\pspolygon[fillstyle=none,linecolor=black,linestyle=dashed](5,1)(6,1)(6,2)(5,2)
\rput(5.5,0){(c)}
\pspolygon[fillstyle=none,linecolor=black,linestyle=solid](7,1)(7,2.5)(8,2.5)(9.5,1)(9.5,0.5)(7.5,0.5)
\pspolygon[fillstyle=none,linecolor=black,linestyle=dashed](7.5,1)(7.5,2)(8,2)(9,1)
\rput(8.25,0){(d)}
\pspolygon[fillstyle=none,linecolor=black,linestyle=solid](12,0.5)(12,2.5)(11.5,3)(9.5,3)(9.5,2.5)(11.5,0.5)
\pspolygon[fillstyle=none,linecolor=black,linestyle=dashed](11.5,1)(11.5,2.5)(10,2.5)
\rput(10.75,0){(e)}

\end{pspicture} \\
\textbf{Figure \ref{fig2gplus2}}: Polytopes with $g^{(1)}=1$ and $\#(\Delta \cap \Z^2)-c(\Delta)-3 = 2g+2$ \\
\ \\
\end{center}

\begin{lem} \label{nointeriorimpliestrigonal}
If $\Delta^{(1)}$ is a $2$-dimensional polytope having no interior
lattice points, then any $\Delta$-nondegenerate curve is either
trigonal, either isomorphic to a smooth quintic in $\mathbb{P}^2_k$.
\end{lem}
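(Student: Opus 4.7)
The plan is to split according to the classical classification of $2$-dimensional lattice polygons with no interior lattice points: up to equivalence, such a polygon is either $2\Sigma$ or has lattice width one, meaning it fits between two consecutive parallel lattice lines (see e.g.\ Koelman's thesis). Before splitting, I would record that $\Delta^{(1)}$ being $2$-dimensional makes the interior lattice points of $\Delta$ noncollinear, so Lemma~\ref{hyperelliptic} ensures $V$ is non-hyperelliptic and in particular that the canonical map is a closed embedding.

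In the case $\Delta^{(1)} \cong 2\Sigma$ (so $g = 6$), after a lattice equivalence the monomials $x^i y^j$ indexed by $\Delta^{(1)} \cap \Z^2$ are exactly $1, x, y, x^2, xy, y^2$. By Proposition~\ref{internallatticegenus} they span $\mathcal{L}(K_\Delta)$, so the canonical map of $V$ factors as $V \hookrightarrow \PP^2 \xrightarrow{\nu_2} \PP^5$, with $\nu_2$ the $2$-uple Veronese embedding. Since the composition is a closed embedding, so is $V \hookrightarrow \PP^2$, and the image is a smooth plane curve of genus $6$; the genus-degree formula $g = \binom{d-1}{2}$ then forces $d = 5$.

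In the lattice-width-one case, after equivalence we have $\Delta^{(1)} \subset \R \times [0,1]$, so that $X(\Delta^{(1)})_k$ is a rational normal scroll: namely $\PP^2$ when $\Delta^{(1)} = \Sigma$, a Hirzebruch surface when $\Delta^{(1)}$ is a trapezoid, and a cone over a rational normal curve when $\Delta^{(1)}$ is a thin triangle. The canonical image of $V$ lies on this scroll (again by Proposition~\ref{internallatticegenus}), and the goal is to show that the ruling restricts to a $g^1_3$ on $V$. One route is a direct computation: determine the divisor class of the canonical image on the scroll via adjunction ($K_V = L|_V$, with $L$ the embedding line bundle) together with $g = \#(\Delta^{(1)} \cap \Z^2)$, and verify that $V$ meets a ruling fiber in exactly $3$ points. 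Alternatively, one can invoke the classical fact that a non-hyperelliptic canonical curve contained in a rational normal scroll distinct from a Veronese surface is trigonal. The hard part will be precisely this trigonality conclusion, since $X(\Delta^{(1)})_k$ takes several shapes and a uniform direct calculation needs some bookkeeping; the scroll invocation cleans this up, though one should separately note the smallest case $\Delta^{(1)} = \Sigma$ (so $g = 3$), where $V$ is directly a smooth plane quartic and trigonality is immediate via projection from any point on $V$.
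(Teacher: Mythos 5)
Your argument is correct, but for the main (trigonal) case it takes a genuinely different route from the paper. Both proofs start from the same classification (a $2$-dimensional lattice polygon with no interior lattice points is either $2\Sigma$ or has lattice width one), both handle $2\Sigma$ by observing that the canonical map factors through the Veronese so the curve is a smooth plane quintic, and both must except $\Delta^{(1)}=\Sigma$ and treat plane quartics separately. The difference is in how trigonality is extracted in the width-one case. The paper stays entirely combinatorial: from $\Delta^{(1)}\subset\R\times[0,1]$ it deduces that $\Delta$ itself is caught between two horizontal lines at distance $3$ (this is exactly where $\Sigma$ is the exception), so $\deg_y f\leq 3$ and the coordinate function $x$ already realizes a degree-$3$ map $V\to\PP^1_k$ --- no canonical embedding, no adjunction, no case analysis on the shape of the scroll. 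Your route through the canonical image on $X(\Delta^{(1)})_k$ and the ruling of the rational normal scroll is essentially Koelman's original Petri-type proof, which the paper cites as the alternative; it is valid, but the ``bookkeeping'' you flag is real: the adjunction computation yields a quadratic for $C\cdot F$ with two roots ($3$ and $2(g-1)/(g-2)$, the latter non-integral for $g\geq 5$), and the degenerate scroll (cone over a rational normal curve, arising from thin triangles) forces you either to work on a resolution or to invoke the classical scroll-implies-trigonal statement in a form that covers singular scrolls. What your approach buys is independence from the combinatorial fact that width one for $\Delta^{(1)}$ propagates to width three for $\Delta$; what the paper's approach buys is a two-line proof of the $g^1_3$ with no classical machinery. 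If you keep your version, spell out which form of the classical fact you are citing (e.g.\ via geometric Riemann--Roch and Clifford's theorem applied to a ruling fiber) so the singular-scroll case is visibly covered.
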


\begin{proof}
Koelman gives a proof of this in his Ph.D. thesis \cite[Lemma
3.2.13]{Koelman}, based on Petri's theorem. A more combinatorial
argument uses the fact that lattice polytopes of genus 0 are
equivalent with either $2\Sigma$, or with a polytope that is caught
between two horizontal lines of distance $1$. This was proved
independently by Arkinstall, Khovanskii, Koelman, and Schicho (see
the generalized statement by Batyrev-Nill \cite[Theorem
2.5]{BatyrevNill}).

In the first case, $\Delta$-nondegenerate curves are canonically
embedded in $X(2\Sigma)_k \cong \mathbb{P}^2_k$, hence they are
isomorphic to smooth plane quintics.

In the second case, it follows that $\Delta$ is caught between two
horizontal lines of distance $3$. This may actually fail if $\Delta^{(1)} =
\Sigma$, which corresponds to smooth plane quartics. But in both
situations, $\Delta$-nondegenerate curves are trigonal.
\end{proof}

\section{Refining the upper bound: Maximal polytopes} \label{maximalpolytopessection}

We further refine the bound in Proposition \ref{maintheorem} by
adapting the proof of the Haase--Schicho bound $r \leq r^{(1)} + 9$
in order to obtain an estimate for $r - c(\Delta)$ instead of just
$r$. We first do this for \emph{maximal} polytopes, and treat
nonmaximal polytopes in the next section.

\begin{defn} \label{maximaldef}
A lattice polytope $\Delta \subset \Z^n$ is \emph{maximal} if $\Delta$ is not properly contained in another lattice polytope with the same interior lattice points, i.e., for all lattice polytopes $\Delta' \supsetneq \Delta$, we have
\[ \inter(\Delta') \cap \Z^n \neq \inter(\Delta) \cap \Z^n. \]
\end{defn}

We define the \emph{relaxed polytope} $\Delta^{(-1)}$ of a lattice
polytope $\Delta \subset \Z^2$ as follows.  Assume that $0 \in
\Delta$.  To each facet $\tau \subset \Delta$ given by an inequality
of the form $a_1X + a_2Y \leq b$ with $a_i \in \Z$ coprime, we
define the \emph{relaxed inequality} $a_1X + a_2Y \leq b+1$ and let
$\Delta^{(-1)}$ be the intersection of these relaxed inequalities.
If $p$ is a vertex of $\Delta$ given by the intersection of two such
facets, we define the \emph{relaxed vertex} $p^{(-1)}$ to be the
intersection of the boundaries of the corresponding relaxed inequalities.


\begin{lem}[Haase--Schicho {\cite[Lemmas 9--10]{HaaseSchicho}}, Koelman {\cite[Section 2.2]{Koelman}}] \label{maximalitycriterion}
Let $\Delta \subset \Z^2$ be a $2$-dimensional lattice polytope.
Then $\Delta^{(-1)}$ is a lattice polytope if and only if $\Delta =
\Delta'^{(1)}$ for some lattice polytope $\Delta'$. Furthermore, if
$\Delta$ is nonhyperelliptic, then $\Delta$ is maximal if and only
if $\Delta=(\Delta^{(1)})^{(-1)}$.
\end{lem}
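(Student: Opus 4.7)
The lemma contains two equivalences; I address them in sequence, with the second following formally from the first. The easy direction of the first equivalence is that $\Delta^{(-1)}$ being a lattice polytope implies $\Delta = (\Delta^{(-1)})^{(1)}$. Indeed, the interior of $\Delta^{(-1)}$ is cut out by the strict relaxed inequalities $n \cdot X < b + 1$; since each outward normal $n \in \Z^2$ is primitive and $b \in \Z$, this condition on lattice points is equivalent to the original facet inequality $n \cdot X \leq b$. Hence $\inter(\Delta^{(-1)}) \cap \Z^2 = \Delta \cap \Z^2$, and taking convex hulls yields $(\Delta^{(-1)})^{(1)} = \Delta$ since $\Delta$ is itself a lattice polytope.

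The reverse direction of the first equivalence requires showing that if some lattice polytope $\Delta'$ satisfies $(\Delta')^{(1)} = \Delta$, then $\Delta^{(-1)}$ must already be a lattice polytope. The key technical step is the containment $\Delta' \subseteq \Delta^{(-1)}$: fix a facet inequality $n \cdot X \leq b$ of $\Delta$ and suppose toward contradiction that some vertex $v$ of $\Delta'$ satisfies $n \cdot v \geq b + 2$. The facet of $\Delta$ on the line $n \cdot X = b$ is an edge containing at least two lattice points $p_0, p_1$, both of which lie in $\inter(\Delta')$. The triangle they span with $v$ sits inside $\Delta'$, and its slice at the line $n \cdot X = b + 1$ is a segment parallel to the facet whose relative interior lies in $\inter(\Delta')$. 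Using the primitivity of $n$ together with iteration over the lattice points of the facet, one produces a lattice point on this slice, yielding an interior lattice point of $\Delta'$ with $n$-value $b + 1 > b$, hence outside $\Delta$, a contradiction. Given the containment, $\Delta' \subseteq \Delta^\ast := \conv(\Delta^{(-1)} \cap \Z^2) \subseteq \Delta^{(-1)}$, a lattice polytope. If $\Delta^\ast \subsetneq \Delta^{(-1)}$, then $\Delta^\ast$ arises from $\Delta^{(-1)}$ by cutting off its non-lattice corners with new edges; these edges separate certain lattice points of $\Delta$ from $\inter(\Delta^\ast)$, and such a point cannot be interior to any $\Delta' \subseteq \Delta^\ast$, contradicting $(\Delta')^{(1)} = \Delta$. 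Hence $\Delta^\ast = \Delta^{(-1)}$ and $\Delta^{(-1)}$ is a lattice polytope.

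For the second equivalence, assume $\Delta$ is nonhyperelliptic, so that $\Delta^{(1)}$ is $2$-dimensional. Applying the first equivalence to $\Delta^{(1)}$ with $\Delta$ itself as witness (since $(\Delta)^{(1)} = \Delta^{(1)}$), we deduce that $(\Delta^{(1)})^{(-1)}$ is a lattice polytope; the forward direction applied to $\Delta^{(1)}$ gives $((\Delta^{(1)})^{(-1)})^{(1)} = \Delta^{(1)}$, so $(\Delta^{(1)})^{(-1)}$ has exactly the same interior lattice set as $\Delta$. By the containment claim, any lattice polytope $\Delta''$ with $(\Delta'')^{(1)} = \Delta^{(1)}$ satisfies $\Delta'' \subseteq (\Delta^{(1)})^{(-1)}$. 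Thus $(\Delta^{(1)})^{(-1)}$ is the unique maximum among lattice polytopes sharing the same interior lattice set as $\Delta$, and $\Delta$ is maximal if and only if $\Delta = (\Delta^{(1)})^{(-1)}$.

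The main obstacle is the containment claim $\Delta' \subseteq \Delta^{(-1)}$, and within it the arithmetic step producing a lattice point on the slice at $n \cdot X = b + 1$. This requires carefully exploiting the primitivity of $n$ together with the combinatorial structure of the lattice points on the facet, and constitutes the technical heart of the Haase--Schicho and Koelman treatments.
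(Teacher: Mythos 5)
The paper does not prove this lemma at all: it is quoted from Haase--Schicho \cite[Lemmas 9--10]{HaaseSchicho} and Koelman \cite[Section 2.2]{Koelman}, so your sketch is being measured against those sources rather than against anything in the text. Your architecture is the standard one and is sound: the ``only if'' half of the first equivalence follows from the observation that, by primitivity of the facet normals and integrality, $\inter(\Delta^{(-1)})\cap\Z^2=\Delta\cap\Z^2$; the ``if'' half rests on the containment $\Delta'\subseteq\Delta^{(-1)}$; and the maximality statement is a formal consequence once one knows that $(\Delta^{(1)})^{(-1)}$ is the unique maximum among lattice polytopes sharing the interior lattice points of $\Delta$ (here you correctly use that nonhyperelliptic means $\Delta^{(1)}$ is $2$-dimensional, so the first part applies to it).

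However, the two places where the actual content lives are both asserted rather than proved. First, the lattice point on the slice at level $b+1$: your phrase ``iteration over the lattice points of the facet'' does not produce one. The critical case is a facet with exactly two lattice points $p_0$ and $p_1=p_0+u$ and a vertex $v$ of $\Delta'$ with $n\cdot v=b+t$, $t\geq 2$: the slice then has lattice length only $(t-1)/t<1$, and what saves the argument is integrality of the apex --- the endpoints of the slice are $p_0+\frac{1}{t}(v-p_0)$ and $p_1+\frac{1}{t}(v-p_1)$, so in coordinates where $n=(0,1)$ the slice is $[a/t,\,(a+t-1)/t]$ with $a\in\Z$, which always contains $\lceil a/t\rceil$. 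Second, and more seriously, the passage from $\Delta'\subseteq\Delta^{(-1)}$ to ``$\Delta^{(-1)}$ is a lattice polytope'': the claim that the edges of $\Delta^\ast=\conv(\Delta^{(-1)}\cap\Z^2)$ which cut off a non-lattice vertex $q$ of $\Delta^{(-1)}$ must strand a lattice point of $\Delta$ on $\partial\Delta^\ast$ is essentially as deep as the statement being proved, and you give no argument for it. Verifying it amounts to a local computation at the vertex $p$ of $\Delta$ beneath $q$: writing $q=p-(u_1+u_2)/D$ with $u_1,u_2$ the primitive edge directions and $D=|\det(u_1,u_2)|$, one checks that when $q\notin\Z^2$ the nearest lattice points $w_1,w_2$ on the two relaxed facet lines satisfy a determinant inequality that forces $p$ onto the boundary of $\conv(\{w_1,w_2\}\cup\Delta)$; one must also separately handle the case where a relaxed facet inequality becomes redundant in $\Delta^{(-1)}$, so that $q$ arises from two non-adjacent facets of $\Delta$. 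Without these two computations the proposal is a correct outline of the Haase--Schicho/Koelman argument, not a proof.
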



The proof of the Haase--Schicho bound $r \leq r^{(1)} + 9$ utilizes
a theorem of Poonen and Rodriguez-Villegas \cite{PoonenRodriguez},
which we now introduce.

A \emph{legal move} is a pair $(v,w)$ with $v,w \in \Z^2$ such that
$\conv(\{0,v,w\})$ is a $2$-dimensional triangle whose only nonzero
lattice points lie on $e(v,w)$, the edge between $v$ and $w$. The
\emph{length} of a legal move $(v,w)$ is
\[ \ell(v,w)= \det \begin{pmatrix} v \\ w \end{pmatrix}, \]
which is of absolute value $r-1$, where $r=\#(e(v,w) \cap \Z^2)$
is the number of lattice points on the edge between $v$ and $w$.
Note that the length can be negative.

A \emph{legal loop} $\calP$ is a sequence of vectors $v_1, v_2,
\dots, v_n \in \mathbb{Z}^2$ such that for all $i=1, \dots, n$ and indices taken modulo $n$, we have:
\begin{itemize}
\item $(v_i, v_{i+1})$ is a legal move, and
\item $v_{i-1}, v_i, v_{i+1}$ are not contained in a line.
\end{itemize}
The \emph{length} $\ell(\calP)$ of a legal loop $\calP$ is the sum of the lengths of
its legal moves.

The \emph{winding number} of a legal loop is its winding number
around $0$ in the sense of algebraic topology. The \emph{dual
loop} $\calP\spcheck$ is given by $w_1, \dots, w_n$, where $w_i =
\ell(v_i,v_{i+1})^{-1} \cdot (v_{i+1}-v_i)$ for $i=1, \dots, n$. One can
check that this is again a legal loop with the same winding number
as $\calP$ and that $\calP\spcheck{}\spcheck = \calP$ after a
$180^\circ$ rotation.

\begin{thm}[Poonen--Rodriguez-Villegas {\cite[Section 9.1]{PoonenRodriguez}}] \label{Poonen} Let
$\calP$ be a legal loop with winding number $w$. Then
$\ell(\calP) + \ell(\calP\spcheck) = 12w$.\end{thm}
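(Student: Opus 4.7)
The plan is to reduce to the case of \emph{primitive} legal loops (those whose moves all have length~$1$) with winding number $w = 1$, where the identity becomes Noether's formula for a smooth complete toric surface, and then to extend to arbitrary loops by refinement and additivity. Suppose first that $\calP = (v_1, \ldots, v_n)$ is a primitive legal loop with winding number $1$. Since $\det(v_i, v_{i+1}) = 1$, consecutive pairs form a $\Z$-basis of $\Z^2$, so the $v_i$ are precisely the primitive ray generators of a smooth complete fan defining a smooth projective toric surface $X_\calP$ with torus-invariant prime divisors $D_1, \ldots, D_n$. Writing the fan relation $v_{i-1} + v_{i+1} = a_i v_i$, one has $D_i^2 = -a_i$ and $\det(v_{i-1}, v_{i+1}) = a_i$. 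A direct calculation with the dual vectors $w_i = v_{i+1} - v_i$ yields $\det(w_i, w_{i+1}) = 2 - a_{i+1}$, so that
\[ \ell(\calP) = n = \chi_{\textup{top}}(X_\calP) \quad \text{and} \quad \ell(\calP\spcheck) = 2n - \sum_i a_i = K_{X_\calP}^2. \]
Noether's formula then gives $\ell(\calP) + \ell(\calP\spcheck) = K_{X_\calP}^2 + \chi_{\textup{top}}(X_\calP) = 12 \chi(\mathcal{O}_{X_\calP}) = 12$, since $X_\calP$ is rational. One can alternatively establish $\sum_i a_i = 3n - 12$ purely combinatorially by induction on $n$, starting from the $\PP^2$-fan and the Hirzebruch fans and using that a toric star subdivision $v' = v_i + v_{i+1}$ raises $n$ by $1$ and $\sum_i a_i$ by $3$.

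For an arbitrary legal loop $\calP$ with winding number $1$, I would refine each legal move $(v_i, v_{i+1})$ of length $>1$ by inserting the intermediate primitive rays from the Hirzebruch--Jung resolution of the cone $\R_{\geq 0} v_i + \R_{\geq 0} v_{i+1}$, producing a primitive legal loop with the same winding number. The critical step is to verify that each single ray insertion preserves $\ell(\calP) + \ell(\calP\spcheck)$: the insertion splits one legal move into two of strictly smaller length while simultaneously perturbing a handful of adjacent dual-loop determinants, and a direct local calculation must show that these changes cancel. Iterating reduces the general loop to the primitive case. Finally, for winding number $w \neq 1$, the quantities $\ell(\calP)$, $\ell(\calP\spcheck)$, and $w(\calP)$ are all additive under a natural concatenation operation on legal loops; combining $\calP$ with a standard auxiliary loop of winding number $1 - w$ yields a loop of winding number $1$, to which the preceding cases apply.

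The main obstacle is the invariance check in the refinement step: precisely tracking how $\ell(\calP)$ and $\ell(\calP\spcheck)$ change under a single intermediate ray insertion and confirming that these local changes cancel is the technical heart of the argument. Algebro-geometrically, this cancellation reflects the telescoping of Dedekind-sum-type corrections arising from the cyclic quotient singularities resolved by the Hirzebruch--Jung process.
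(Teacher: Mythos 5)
The paper does not actually prove this theorem: it is quoted verbatim from Poonen--Rodriguez-Villegas, so there is no internal argument to compare against. Your sketch is essentially the toric-geometry proof (one of the several proofs given in that reference), and the primitive, winding-number-one case is correct as you set it up: $\ell(\calP)=n=\chi_{\textup{top}}$, $\ell(\calP\spcheck)=2n-\sum_i a_i=K^2$, and Noether's formula closes the loop.

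There are, however, two real gaps in the reduction steps. First, the refinement step as described leaves the class of legal loops: for a legal move $(v,w)$ of length $r-1>1$, the triangle $\conv(\{0,v,w\})$ has lattice points only on $e(v,w)$, so the Hirzebruch--Jung rays you insert are exactly the intermediate lattice points \emph{on that edge}; the refined sequence then has collinear consecutive triples, violating the paper's definition of a legal loop, and the corresponding dual ``moves'' have determinant $0$ (degenerate triangles), so $\calP\spcheck$ is no longer a legal loop either. One must therefore work with a relaxed notion in which $\ell(\calP\spcheck)$ is simply the sum $\sum_i \det(w_i,w_{i+1})$. Once that is accepted, the ``technical heart'' you defer is in fact immediate and involves no Dedekind-sum bookkeeping: all sub-moves of the refined edge have the same dual vector $\tfrac{1}{r-1}(w-v)$, so every inserted dual determinant vanishes and the two flanking ones are literally unchanged; hence $\ell(\calP\spcheck)$ is invariant and $\ell(\calP)$ is obviously preserved. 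Second, the reduction from general winding number $w$ to $w=1$ is only asserted: concatenation of legal loops need not be a legal loop (the junction must itself consist of legal, non-collinear moves), additivity of $\ell(\calP\spcheck)$ under concatenation is not obvious because the dual moves at the junction change, and for $w\neq 1$ the loop may contain negative-length moves so the fan interpretation is unavailable from the start. For the application in this paper only $w=1$ is used, but as a proof of the stated theorem the general-$w$ case needs an actual argument (in the source this is handled by a cohomological/additivity argument on $SL_2(\Z)$, not by naive concatenation).
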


Now let $\Delta \subset \Z^2$ be a maximal polytope with
$2$-dimensional interior $\Delta^{(1)}$.  We associate to $\Delta$
a legal loop $\calP(\Delta)$ as follows. By Lemma
\ref{maximalitycriterion}, $\Delta$ is obtained from
$\Delta^{(1)}$ by relaxing the edges.  Let $p_1,\dots,p_n$ be the
vertices of $\Delta^{(1)}$, enumerated counterclockwise; then
$\calP(\Delta)$ is given by the sequence $q_i=p_i^{(-1)}-p_i$
where $p_i^{(-1)}$ is the relaxed vertex of $p_i$.

\begin{exm} \label{legalloop}
The following picture, inspired by Haase--Schicho \cite[Figure 20]{HaaseSchicho},
is illustrative: it shows a polytope $\Delta$ with $2$-dimensional
interior $\Delta^{(1)}$, the associated legal loop $\calP(\Delta)$,
and its dual $\calP(\Delta)\spcheck$.  In this example,
$\ell(\calP(\Delta)) = \ell(\calP(\Delta)\spcheck) = 6$.

\begin{center}
\ \\
\begin{pspicture}(0,0.51)(12,3.4)
\psgrid[gridcolor=lightgray,subgridcolor=lightgray,gridwidth=0.01,subgridwidth=0.01,gridlabels=0,subgriddiv=2]
\pspolygon[fillstyle=none,linecolor=black](0.5,1)(4.5,1)(4.5,2)(2.5,3)
\pspolygon[fillstyle=none,linestyle=dashed,linecolor=black](1.5,1.5)(4,1.5)(4,2)(3,2.5)(2.5,2.5)
\psline[linecolor=blue]{->}(4,1.5)(4.5,1)
\psline[linecolor=blue]{->}(4,2)(4.5,2)
\psline[linecolor=blue]{->}(3,2.5)(2.5,3)
\psline[linecolor=blue]{->}(2.5,2.5)(2.5,3)
\psline[linecolor=blue]{->}(1.5,1.5)(0.5,1)

\psline[linecolor=blue]{->}(7.5,2)(8,1.5)
\psline[linecolor=blue]{->}(7.5,2)(8,2)
\psline[linecolor=blue]{->}(7.5,2)(7,2.5)
\psline[linecolor=blue]{->}(7.5,2)(7.5,2.5)
\psline[linecolor=blue]{->}(7.5,2)(6.5,1.5)
\psline{->}(8,1.5)(8,2)
\psline{->}(8,2)(7,2.5)
\psline{->}(7,2.5)(7.5,2.5)
\psline{->}(7.5,2.5)(6.5,1.5)
\psline{->}(6.5,1.5)(8,1.5)

\pscircle[fillstyle=solid,fillcolor=black](7.5,1.5){0.05}
\pscircle[fillstyle=solid,fillcolor=black](7,1.5){0.05}
\pscircle[fillstyle=solid,fillcolor=black](7,2){0.05}

\psline[linecolor=blue]{->}(10.5,2.5)(9.5,2.5)
\psline[linecolor=blue]{->}(9.5,2.5)(10,2)
\psline[linecolor=blue]{->}(10,2)(10,1.5)
\psline[linecolor=blue]{->}(10,1.5)(11,2)
\psline[linecolor=blue]{->}(11,2)(10.5,2.5)
\psline{->}(10.5,2)(10.5,2.5)
\psline{->}(10.5,2)(9.5,2.5)
\psline{->}(10.5,2)(11,2)
\psline{->}(10.5,2)(10,2)
\psline{->}(10.5,2)(10,1.5)
\end{pspicture} \\
\textbf{Figure \ref{legalloop}}: The legal loop $\calP(\Delta)$ associated to a lattice polytope $\Delta$ \\
\end{center}
\end{exm}

A crucial observation is that the bold-marked lattice points of
$\calP(\Delta)$ are column vectors of $\Delta$.  This holds in
general and lies at the core of our following refinement of the
Haase--Schicho bound.

\begin{lem} \label{boundbminc}
If $\Delta$ is maximal and nonhyperelliptic, then:
\begin{enumalph}
\item $r - r^{(1)} = \ell(\calP(\Delta)) \leq 9$.
\item $r - r^{(1)} - c(\Delta) \leq \min\left(\ell(\calP(\Delta)),\ell(\calP(\Delta)\spcheck)\right) \leq 6$.
\end{enumalph}
\end{lem}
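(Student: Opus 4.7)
The plan is to combine the combinatorics of the legal loop $\calP(\Delta)$ with the Poonen--Rodriguez-Villegas duality (Theorem~\ref{Poonen}). Write $n$ for the number of legal moves in $\calP(\Delta)$, which equals the number of vertices of $\Delta^{(1)}$; since $\Delta$ is nonhyperelliptic, $\Delta^{(1)}$ is $2$-dimensional, so $n \geq 3$.

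For part (a), I would first establish the identity $r - r^{(1)} = \ell(\calP(\Delta))$ by an edge-by-edge count. Enumerating the vertices of $\Delta^{(1)}$ counterclockwise as $p_1,\dots,p_n$ and writing $d_i$ for the primitive direction of the edge from $p_i$ to $p_{i+1}$, relaxation replaces this edge by the parallel edge of $\Delta$ from $p_i^{(-1)}$ to $p_{i+1}^{(-1)}$. Using that the two facet normals at each vertex of $\Delta^{(1)}$ form a unimodular matrix (a consequence of the lattice condition on $\Delta = (\Delta^{(1)})^{(-1)}$ via Lemma~\ref{maximalitycriterion}), one checks $|\det(q_i, d_i)| = 1$, so that exactly $\det(q_i, q_{i+1}) \geq 1$ new lattice points appear on the relaxed edge. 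Summing gives the identity. The bound $\ell(\calP(\Delta)) \leq 9$ then follows from Theorem~\ref{Poonen}: since $\calP(\Delta)$ has winding number $1$, we have $\ell(\calP(\Delta)) + \ell(\calP(\Delta)\spcheck) = 12$, and $\ell(\calP(\Delta)\spcheck) \geq n \geq 3$ because each of the $n$ legal moves of the dual loop has length at least $1$.

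For part (b), the central claim I would prove is that every lattice point $w$ lying strictly between two consecutive vertices $q_i, q_{i+1}$ of $\calP(\Delta)$ is a column vector of $\Delta$, with base facet the edge $\tau \subset \Delta$ between $p_i^{(-1)}$ and $p_{i+1}^{(-1)}$. Writing $w = \alpha q_i + (1-\alpha) q_{i+1}$ with $\alpha \in (0,1)$, maximality of $\Delta$ ensures that translation by $w$ sends $(\Delta \setminus \tau) \cap \Z^2$ back into $\Delta$, because every supporting inequality of $\Delta$ other than the one defining $\tau$ admits additional ``slack'' of at least $1$ in the direction of $w$. The $i$-th edge of $\calP(\Delta)$ contributes $\ell_i - 1$ such interior lattice points, yielding
\[ c(\Delta) \;\geq\; \sum_{i=1}^n (\ell_i - 1) \;=\; \ell(\calP(\Delta)) - n. \]
Combining with part (a) gives
\[ r - r^{(1)} - c(\Delta) \;\leq\; \ell(\calP(\Delta)) - \bigl(\ell(\calP(\Delta)) - n\bigr) \;=\; n \;\leq\; \ell(\calP(\Delta)\spcheck), \]
while the trivial bound $c(\Delta) \geq 0$ yields $r - r^{(1)} - c(\Delta) \leq \ell(\calP(\Delta))$. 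Together these produce the desired minimum, and the final $\leq 6$ follows from $\ell(\calP(\Delta)) + \ell(\calP(\Delta)\spcheck) = 12$.

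The main obstacle will be the column-vector claim, particularly identifying the correct base facet and verifying the containment $w + ((\Delta \setminus \tau) \cap \Z^2) \subset \Delta$ from maximality. A secondary concern is the bookkeeping for the identity in part (a), where one must handle corner contributions at each relaxed vertex $p_i^{(-1)}$ consistently to avoid miscounting lattice points shared by adjacent edges of $\Delta$.
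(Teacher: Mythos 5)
This is essentially the paper's own argument: part (a) is the Haase--Schicho edge count combined with Theorem~\ref{Poonen} and $\ell(\calP(\Delta)\spcheck)\geq 3$, and part (b) rests on the same key claim that lattice points lying properly on a legal move of $\calP(\Delta)$ are column vectors of $\Delta$ with base facet $p_i^{(-1)}p_{i+1}^{(-1)}$, yielding $c(\Delta)\geq \ell(\calP(\Delta))-n$ and $n\leq\ell(\calP(\Delta)\spcheck)$. The only caveat is that the column-vector claim should be restricted to positive-length (counterclockwise-oriented) moves---lengths can be negative---but this does not affect your bound, since the omitted terms $\ell_i-1$ are then negative and only weaken the sum.
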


\begin{proof}
We abbreviate $\calP=\calP(\Delta)$.

Inequality (a) is by Haase--Schicho \cite[Lemma 11]{HaaseSchicho}
and works as follows. The length of the legal move $(q_i,q_{i+1})$
measures the difference between the number of lattice points on the
facet of $\Delta$ connecting $p_i^{(-1)}$ and $p_{i+1}^{(-1)}$, and
the number of lattice points on the edge of $\Delta^{(1)}$
connecting $p_i$ and $p_{i+1}$. Therefore $r - r^{(1)} =
\ell(\calP)$.  The dual loop $\calP\spcheck$ walks (in a consistent
and counterclockwise-oriented way) through the normal vectors of
$\Delta^{(1)}$, therefore each move has positive length and we have
$\ell(\calP(\Delta)\spcheck)\geq 3$. Since $\calP\spcheck$ has
winding number $1$, the statement follows from Theorem~\ref{Poonen}.
(One can further show that equality holds if and only if $\Delta$ is
a multiple of the standard $2$-simplex $\Sigma$.)

To prove inequality (b), we first claim: there is a bijection
between lattice points $v$ which lie properly on a
counterclockwise-oriented (positive length) legal move $q_iq_{i+1}$
of $\calP$, and column vectors of $\Delta$ with base facet
$p_i^{(-1)}p_{i+1}^{(-1)}$. Indeed, after an appropriate
transformation, we may assume as in Proposition \ref{brunsgub} that
$v=(0,-1)$, that $p_i^{(-1)}$ and $p_{i+1}^{(-1)}$ lie on the
$X$-axis, and that $\Delta$ is contained in the positive quadrant
$\R_{\geq 0}^2$; after these normalizations, the claim is straightforward.

Now, since the dual loop $\calP\spcheck$ consists of
counterclockwise-oriented legal moves only, it has at most
$\ell(\calP\spcheck)$ vertices. Since $\calP =
\calP\spcheck{}\spcheck$ (after $180^\circ$ rotation), $\calP$ has
at most $\ell(\calP\spcheck)$ vertices.  By the claim, we have
$\ell(\calP) \leq \ell(\calP\spcheck) + c$, and the result follows
by combining this with part (a) and Theorem~\ref{Poonen}.
\end{proof}

\begin{cor} \label{max2gplus1}
If $\Delta$ is maximal, then $\dim \mathcal{M}_\Delta \leq 2g + 3 -
g^{(1)}$. In particular, if $g^{(1)} \geq 2$ then $\dim
\mathcal{M}_\Delta \leq 2g+1$.
\end{cor}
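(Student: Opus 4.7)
The plan is to bootstrap directly from the refined Haase--Schicho bound proved in Lemma~\ref{boundbminc}(b), combining it with the definition $m(\Delta) = \#(\Delta \cap \Z^2) - c(\Delta) - 3$ from Corollary~\ref{boundcolumn}. The key input is purely arithmetic: I want to reorganize the count of lattice points of $\Delta$ according to where they sit relative to $\Delta^{(1)}$, then substitute the bound on $r - r^{(1)} - c(\Delta)$.

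First I would dispose of the hyperelliptic case, in which Lemma~\ref{boundbminc} does not apply. If the interior lattice points of $\Delta$ are collinear, then $\Delta^{(1)}$ is at most one-dimensional, so it has no $2$-dimensional interior and $g^{(1)} = 0$. In this case Lemma~\ref{hyperelliptic} gives $\dim \calM_\Delta \leq 2g-1 \leq 2g + 3 - g^{(1)}$, and we are done. For the remainder, I assume $\Delta$ is nonhyperelliptic, so that $\Delta^{(1)}$ is $2$-dimensional and Lemma~\ref{boundbminc} applies.

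Next, I stratify the lattice points of $\Delta$. The interior lattice points of $\Delta$ are precisely the lattice points of $\Delta^{(1)}$, so the number of interior lattice points decomposes as $g = g^{(1)} + r^{(1)}$, and hence
\[ \#(\Delta \cap \Z^2) = g + r = g^{(1)} + r^{(1)} + r. \]
Applying Lemma~\ref{boundbminc}(b) in the form $r - c(\Delta) \leq r^{(1)} + 6$ and Corollary~\ref{boundcolumn}, I get
\[ \dim \calM_\Delta \leq m(\Delta) = g^{(1)} + r^{(1)} + r - c(\Delta) - 3 \leq g^{(1)} + 2r^{(1)} + 3. \]
Since $2g + 3 - g^{(1)} = 2(g^{(1)} + r^{(1)}) + 3 - g^{(1)} = g^{(1)} + 2r^{(1)} + 3$, this is exactly the desired bound.

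The "in particular" clause is then immediate: if $g^{(1)} \geq 2$ then $2g + 3 - g^{(1)} \leq 2g+1$. There is no real obstacle here, since the hard combinatorial content was already packaged into Lemma~\ref{boundbminc}(b); the whole argument amounts to a careful bookkeeping of lattice points and a single substitution. The only subtlety worth flagging is the hyperelliptic case, where Lemma~\ref{boundbminc} is not available and one must fall back on the moduli-theoretic bound $\dim \calM_\Delta \leq 2g - 1$ for hyperelliptic loci.
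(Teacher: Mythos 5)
Your proof is correct and follows essentially the same route as the paper: the paper's entire proof is the one-line computation $m(\Delta)=g+r-3-c(\Delta)\leq g+r^{(1)}+3\leq 2g+3-g^{(1)}$, which is exactly your substitution of Lemma~\ref{boundbminc}(b) into Corollary~\ref{boundcolumn}. Your explicit treatment of the hyperelliptic case (where Lemma~\ref{boundbminc} does not apply) is a welcome bit of extra care that the paper leaves implicit.
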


\begin{proof}
By Lemma \ref{boundbminc}, we have $ m(\Delta)=g+r-3-c(\Delta)
\leq g+r^{(1)}+3 \leq 2g+3-g^{(1)}$.
\end{proof}

\begin{rmk}
Note that Lemma~\ref{boundbminc}(a) immediately extends to
nonmaximal polytopes ($r - r^{(1)}$ can only decrease), so the
Haase--Schicho bound holds for arbitrary nonhyperelliptic polytopes.
This we cannot conclude for part (b): if $r$ decreases, $c(\Delta)$
may decrease more quickly so that the bound no longer holds. An
example of such behaviour can be found in
Figure~\ref{fig2gplus2}(c).
\end{rmk}

\section{Refining the upper bound: general polytopes}
\label{generalpolytopessection}

We are now ready to prove the main result of
Sections~\ref{SectionBounds}--\ref{generalpolytopessection}.
\begin{thm} \label{reallymaintheorem}
If $g \geq 2$, then $\dim \Mgnd \leq 2g+1$ except for $g=7$ where we have $\dim \calM_7^{\textup{nd}} \leq 16$.
\end{thm}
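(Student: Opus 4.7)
The strategy is to bound $\dim \calM_\Delta$ for each lattice polytope $\Delta$ with $g$ interior lattice points and then take the maximum. First observe that if $\Delta \subsetneq \widetilde{\Delta}$ share the same interior lattice points, then $\calM_\Delta \subseteq \calM_{\widetilde{\Delta}}$: given a $\Delta$-nondegenerate $f$ and a generic polynomial $h$ supported on $\widetilde{\Delta}$, the perturbation $f + t h$ is $\widetilde{\Delta}$-nondegenerate for generic small $t$, and the moduli class of $V(f + t h)$ converges to $[V(f)]$ as $t \to 0$ (using that $\calM_g$ is separated). Passing to the maximal enlargement, we may therefore assume $\Delta$ is maximal.

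\textbf{Case split on $g^{(1)} := \#(\inter(\Delta^{(1)}) \cap \Z^2)$.} If $\Delta$ is hyperelliptic (i.e.\ $\Delta^{(1)}$ is at most $1$-dimensional), Lemma \ref{hyperelliptic} gives $\dim \calM_\Delta \leq 2g-1$. Otherwise $\Delta^{(1)}$ is $2$-dimensional. If $g^{(1)} \geq 2$, Corollary \ref{max2gplus1} yields $\dim \calM_\Delta \leq 2g+3-g^{(1)} \leq 2g+1$. If $g^{(1)} = 0$, Lemma \ref{nointeriorimpliestrigonal} says any $\Delta$-nondegenerate curve is trigonal (moduli dimension $2g+1$, a classical result) or a smooth plane quintic (genus $6$, moduli dimension at most $12 = 2g$).

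\textbf{The reflexive case $g^{(1)} = 1$.} Here $\Delta^{(1)}$ is a reflexive polygon, of which there are $16$ up to equivalence (see the appendix). Since $\Delta$ is maximal and nonhyperelliptic, Lemma \ref{maximalitycriterion} forces $\Delta = (\Delta^{(1)})^{(-1)}$, so there are at most $16$ polytopes $\Delta$ to inspect. For each, I compute $m(\Delta) = \#(\Delta \cap \Z^2) - c(\Delta) - 3$ directly. By the remark following Proposition \ref{maintheorem}, all but the $5$ polytopes listed in Figure \ref{fig2gplus2} satisfy $m(\Delta) \leq 2g+1$, which closes those cases by Corollary \ref{boundcolumn}.

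\textbf{The five exceptions and the main obstacle.} For each polytope in Figure \ref{fig2gplus2}, compute $g = 1 + r^{(1)}$: one finds $g = 7$ for the hexagonal polytope (a), $g = 8$ for (b), $g = 9$ for (c) and (d), and $g = 10$ for (e). The hexagon (a) gives $m(\Delta) = 16 = 2g+2$, which is precisely the $g=7$ exception stated in the theorem. The main difficulty is showing that no analogous jump occurs for (b)--(e): although $m(\Delta) = 2g+2$ in each, the theorem demands $\dim \calM_\Delta \leq 2g+1$. I would resolve this by exploiting the canonical embedding $V \hookrightarrow X(\Delta^{(1)})_k$ of Proposition \ref{internallatticegenus}: for each of these four polytopes, the automorphism group of the toric surface $X(\Delta^{(1)})_k$ strictly exceeds the subgroup of $\Aut(X(\Delta)_k)$ accounted for by column vectors of $\Delta$, and quotienting by the extra automorphisms lowers the bound by one to give $\dim \calM_\Delta \leq 2g+1$ in each remaining case.
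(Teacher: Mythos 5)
Your overall architecture matches the paper's: reduce to a polytope-by-polytope bound, split on $g^{(1)}$, use Corollary \ref{max2gplus1} for $g^{(1)}\geq 2$, Lemma \ref{nointeriorimpliestrigonal} for $g^{(1)}=0$, and a finite check over the $16$ reflexive interiors for $g^{(1)}=1$, with the hexagon producing the $g=7$ exception. The genuine divergence, and the first gap, is in how you treat nonmaximal polytopes. Your opening reduction ($\calM_\Delta \subseteq \calM_{\widetilde{\Delta}}$ via the pencil $f+th$) is a legitimate idea --- the paper gestures at exactly this route as an ``alternative proof'' at the end of Section \ref{generalpolytopessection} --- but your justification is too thin: separatedness of $\calM_g$ gives uniqueness of limits, not their existence or identification. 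To conclude $[V(f)]\in\calM_{\widetilde{\Delta}}$ you must check that the fiber at $t=0$ of the family of hyperplane sections of $X(\widetilde{\Delta})_k$ is the smooth genus-$g$ curve $\widetilde{V}$ (the closure of $U(f)$ in $X(\widetilde{\Delta})_k$) with no extra components in the toric boundary, so that the classifying morphism extends over $t=0$ with value $[V(f)]$. This can be proved (every facet of $\widetilde{\Delta}$ must meet $\supp(f)$ because the interior lattice points are preserved, and then $p_a(\widetilde{V})=g=p_g(\widetilde{V})$ forces smoothness), but it is precisely the delicate point; the paper instead avoids it by an explicit count of how $\#(\Delta\cap\Z^2)$ and $c(\Delta)$ change when boundary points are removed, together with the translation trick $f(x,y+\lambda)$.

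The second and more serious problem is your treatment of the five polytopes of Figure \ref{fig2gplus2}. Four of them, (b)--(e), are \emph{not maximal}, so under your own reduction they never arise: their maximal enlargements $(\Delta^{(1)})^{(-1)}$ are among the maximal genus-one-interior polytopes satisfying $m\leq 2g+1$. You did not notice this, and the replacement argument you propose --- that the extra automorphisms of the canonical toric surface $X(\Delta^{(1)})_k$ ``lower the bound by one'' --- does not work as stated: an automorphism of $X(\Delta^{(1)})_k$ need not preserve $\Delta$-nondegeneracy, nor even induce an action on $M_\Delta$, so it yields no a priori lower bound on the fibers of $h_\Delta$ and hence no improvement of $m(\Delta)$. (That mechanism is what the paper uses for the \emph{lower} bound on the trigonal locus in Section \ref{morecurves}, where Petri's theorem supplies the needed rigidity; it is not available as an upper-bound device here.) As written, cases (b)--(e) are therefore not closed: either make the maximality reduction rigorous and observe that these cases are subsumed by it, or handle them as the paper does, by showing that a $\Delta$-nondegenerate $f$ becomes $\widetilde{\Delta}$-nondegenerate after a generic substitution $y \mapsto y+\lambda$.
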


\begin{proof}
It suffices to show that the claimed bounds hold for all polytopes
$\Delta$ with $g$ interior lattice points. By the proof of
Proposition~\ref{maintheorem}, we may assume that $\Delta^{(1)}$ is
two-dimensional, that it is not a multiple of $\Sigma$, and that it
has $g^{(1)} \geq 1$ interior lattice points.

Let us first assume that $g^{(1)} \geq 2$. We will show that $\dim
\calM_\Delta \leq 2g+ 1$. From Corollary~\ref{max2gplus1}, we know
that this is true if $\Delta$ is maximal. Therefore, suppose that
$\Delta$ is nonmaximal; then it is obtained from a maximal polytope
$\widetilde{\Delta}$ by taking away points on the boundary (keeping
the interior lattice points intact). If two or more boundary points
are taken away, then as in (\ref{boundwithoutc}) we have
\[ m(\Delta) \leq \#(\Delta \cap \mathbb{Z}^2) - 3 \leq \#(\widetilde{\Delta} \cap \mathbb{Z}^2) - 2 - 3 \leq 2g + 5 -
g^{(1)} - 2 \leq 2g+1. \] So we may assume that
$\Delta=\conv(\widetilde{\Delta} \cap \Z^2 \setminus \{p\})$ for a
vertex $p \in \widetilde{\Delta}$. Similarly, we may assume that
$c(\Delta) < c(\widetilde{\Delta})$, for else
\[ m(\Delta) = \#(\Delta \cap \mathbb{Z}^2) - c(\Delta) - 3 \leq \#(\widetilde{\Delta} \cap \mathbb{Z}^2) - c(\widetilde{\Delta}) -
3 = m(\widetilde{\Delta}) \leq 2g + 1. \] Let $v$ be a column vector
of $\widetilde{\Delta}$ that is no longer a column vector of
$\Delta=\conv(\widetilde{\Delta} \cap \Z^2 \setminus \{p\})$. Then
$p$ must lie on the base facet $\tau$ of $v$. After an appropriate
transformation, we may assume that $p = (0,0)$, that
$v=(0,-1)$, that $\tau$ lies along the $X$-axis, and that
$\widetilde{\Delta}$ lies in the positive quadrant, as follows.



\begin{center} \label{nonmaximal}
\begin{pspicture}(0,1)(5,5)
\psgrid[gridcolor=lightgray,subgridcolor=lightgray,gridwidth=0.01,subgridwidth=0.01,gridlabels=0,subgriddiv=2]
\psline{->}(0,2)(5,2) \psline{->}(1,1)(1,5)
\pspolygon[fillstyle=none,linecolor=black](1,2)(1,3)(3,4)(4,2)
\psline[linecolor=red]{->}(2.5,2.5)(2.5,2)
\psline[linestyle=dashed](1.5,2)(1,2.5) \rput(0.8,1.8){\small $p$}
\pscircle[fillstyle=solid,fillcolor=black](1,2){0.05}
\rput(2.7,2.25){\small \textcolor{red}{$v$}}
\end{pspicture}\\
\textbf{Figure \ref{nonmaximal}.2}: An almost maximal polytope.\\
\end{center}

Note that $(1,1) \in \inter(\widetilde{\Delta})$ since otherwise $v$
would still be a column vector of $\Delta$. But then the other facet
of $\widetilde{\Delta}$ which contains $p$ must be supported on the
$Y$-axis, for else $(1,1)$ would no longer be in $\inter(\Delta)$.
One can now verify that if $f(x,y)$ is $\Delta$-nondegenerate, then
for all but finitely many $\lambda \in k$, the polynomial $f(x,y +
\lambda)$ will have Newton
polytope $\widetilde{\Delta}$ and 
all but finitely of those will be
$\widetilde{\Delta}$-nondegenerate. Therefore, we have $\calM_\Delta
\subset \calM_{\widetilde{\Delta}}$, and the dimension estimate
follows.

Now suppose that $g^{(1)} = 1$. From the finite computation in the
proof of Proposition~\ref{maintheorem}, we know that the bound $\dim
\calM_\Delta \leq 2g+1$ holds if $\Delta$ is not among the polytopes
listed in Figure~\ref{fig2gplus2}. Now in this list, the polytopes (b)--(e) are
not maximal, and for these polytopes the same trick as in the $g^{(1)}
\geq 2$ case applies. However, polytope (a) is maximal and contains
7 interior lattice points: therefore, we can only prove $\dim
\mathcal{M}^\textup{nd}_7 \leq 16$.
\end{proof}

Let $\Delta$ be a nonmaximal nonhyperelliptic lattice polytope, and
let $\widetilde{\Delta} = (\Delta^{(1)})^{(-1)}$ be the smallest
maximal polytope containing $\Delta$. Let $f \in k[x^\pm, y^\pm]$ be
a $\Delta$-nondegenerate Laurent polynomial. Since $\Delta \subset
\widetilde{\Delta}$, we can consider the (degree $1$) locus
$\widetilde{V}$ of $f=0$ in $X(\widetilde{\Delta})_k = \Proj \,
k[\widetilde{\Delta}]$.
Then one can wonder whether the observation we made in the proof of
Theorem~\ref{reallymaintheorem} holds in general: is there always a
$\sigma \in \Aut(X(\widetilde{\Delta})_k)$ such that
$\sigma(\widetilde{V}) \cap \T^2_k$ is defined by a
$\widetilde{\Delta}$-nondegenerate polynomial? The answer is no,
because it is easy to construct examples where the only
automorphisms of $X(\widetilde{\Delta})_k$ are those induced by
$\Aut (\T^2_k)$. Then $\sigma(\widetilde{V}) \cap \T^2_k$ is always
defined by $f(\lambda x, \mu y)$ (for some $\lambda, \mu \in k^*$),
which does not have $\widetilde{\Delta}$ as its Newton polytope and hence
cannot be $\widetilde{\Delta}$-nondegenerate.

However, $f$ is very close to being
$\widetilde{\Delta}$-nondegenerate, and this line of thinking leads
to the following observation. Let $p$ be a vertex of
$\widetilde{\Delta}$ that is not a vertex of $\Delta$, and let
$q_1,q_2$ be the closest lattice points to $p$ on the respective
facets of $\widetilde{\Delta}$ containing $p$. The triangle spanned
by $p,q_1,q_2$ cannot contain any other lattice points, because
otherwise removing $p$ would affect the interior of
$\widetilde{\Delta}$. Thus the volume of this triangle is equal to
$1/2$ by Pick's theorem, and the affine chart of
$X(\widetilde{\Delta})_k$ attached to the cone at $p$ is isomorphic
to $\mathbb{A}^2_k$. In particular, $X(\widetilde{\Delta})_k$ is
nonsingular in the zero-dimensional torus $\T_p$ corresponding to
$p$. Then $f$ fails to be $\widetilde{\Delta}$-nondegenerate only
because $\widetilde{V}$ passes through $\T_p$ (i.e.\ passes through
$(0,0) \in \A^2_k$), elsewhere it fulfils the conditions of
nondegeneracy: $\widetilde{V}$ is smooth, intersects the
$1$-dimensional tori associated to the facets of
$\widetilde{\Delta}$ transversally, and does not contain the
singular points of $X(\widetilde{\Delta})_k$.
Now following the methods of Section~\ref{sectionmoduliformulation},
one could construct the bigger moduli space of curves satisfying
this weaker nondegeneracy condition. Its dimension would still be
bounded by $\#( \widetilde{\Delta} \cap \mathbb{Z}^2) -
c(\widetilde{\Delta}) - 3$, which by Lemma~\ref{boundbminc} is at
most $2g+3 - g^{(1)}$ because $\widetilde{\Delta}$ is maximal.
Therefore $\dim \calM_\Delta \leq 2g+3 - g^{(1)}$ for nonmaximal
$\Delta$, and this yields an alternative proof of
Theorem~\ref{reallymaintheorem}. Related observations have been made
by Koelman \cite[Section 2.6]{Koelman}.

\section{Koelman's theorem and a lower bound for $\dim \Mgnd$} \label{morecurves}

For $g \geq 2$, we implicitly proved in Section~\ref{sectionhyperelliptic} that
$\dim \Mgnd \geq 2g - 1$. But already in genera $3$
and $4$, by the results in Section~\ref{sectionthreefour} we have
$\dim \calM_{3}^{\textup{nd}}=6$ and $\dim
\calM_{4}^{\textup{nd}}=9$, so this lower bound is an
underestimation. For higher genera, we prove in this last section
that the bounds given in Theorem~\ref{reallymaintheorem} are sharp.
This follows from the following main result of Koelman's Ph.D.
thesis.

\begin{thm}[{Koelman \cite[Theorem 2.5.12]{Koelman}}] \label{Koelmanmain}
Let $\Delta \subset \R^2$ be a maximal nonhyperelliptic lattice
polytope. Then
\[ \dim \calM_\Delta = \#(\Delta \cap \mathbb{Z}^2) - 1 - \dim \Aut(X(\Delta)_k). \]
\end{thm}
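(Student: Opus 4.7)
The plan is to analyze the fibers of the morphism $h_\Delta \colon M_\Delta \to \calM_g$, whose image closure is $\calM_\Delta$. Since $\calM_g$ is a coarse moduli space, two Laurent polynomials $f, f' \in M_\Delta$ map to the same point of $\calM_g$ if and only if $V(f) \cong V(f')$ as abstract curves. I will prove both inequalities by studying these fibers.

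For the upper bound $\dim \calM_\Delta \leq \#(\Delta \cap \Z^2) - 1 - \dim \Aut(X(\Delta)_k)$, use the natural action of $\Aut(X(\Delta)_k)$ on $M_\Delta$: an automorphism $\sigma$ sends $V(f)$ to an isomorphic curve, which is the zero locus of a new polynomial $\sigma \cdot f \in M_\Delta$ (equivalently, $\sigma$ acts on the homogeneous coordinates $c_{ij}$ of $\PP^{\#A-1}_k$). The $\Aut(X(\Delta)_k)$-orbits are contained in the fibers of $h_\Delta$, and because a curve of genus $g \geq 2$ has a finite automorphism group, the stabilizer of a generic $f$ under this action is finite. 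Hence generic orbits have dimension exactly $\dim \Aut(X(\Delta)_k)$, and the fiber dimension theorem yields the claimed bound.

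For the reverse inequality, it suffices to show that generic fibers of $h_\Delta$ are \emph{precisely} the $\Aut(X(\Delta)_k)$-orbits. Here the canonical embedding enters. Since $\Delta$ is nonhyperelliptic, its interior lattice points are not collinear, so $\Delta^{(1)}$ is two-dimensional and, by Lemma~\ref{hyperelliptic}, $V(f)$ is nonhyperelliptic. Proposition~\ref{internallatticegenus} then shows that the monomials $\{x^iy^j : (i,j) \in \inter(\Delta) \cap \Z^2\}$ form a basis of $\mathcal{L}(K)$, so the canonical embedding of $V(f)$ into $\PP^{g-1}_k$ factors through the toric surface $X(\Delta^{(1)})_k \hookrightarrow \PP^{g-1}_k$. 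Given an isomorphism $\psi \colon V(f) \to V(f')$, its action on canonical sections produces a projective-linear map $\Psi \in \Aut(\PP^{g-1}_k)$ carrying the canonical image of $V(f)$ to that of $V(f')$. The key combinatorial-geometric claim is that for generic $f, f'$, the map $\Psi$ preserves $X(\Delta^{(1)})_k$ setwise; once this is known, $\Psi$ restricts to an automorphism of $X(\Delta^{(1)})_k$, and by maximality $\Delta = (\Delta^{(1)})^{(-1)}$, so the combinatorial description of Proposition~\ref{brunsgub} identifies $\Aut(X(\Delta^{(1)})_k)$ with $\Aut(X(\Delta)_k)$ (both are generated by the standard torus, the column vectors, and the finite group of lattice symmetries, which are intrinsic to $\Delta^{(1)}$ since relaxation is canonical). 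The resulting lift $\sigma \in \Aut(X(\Delta)_k)$ satisfies $f' = \sigma \cdot f$ up to scalar, as desired.

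The principal obstacle is the reconstruction step: showing that a generic canonical curve $C \subset X(\Delta^{(1)})_k$ determines $X(\Delta^{(1)})_k$ among projective surfaces in $\PP^{g-1}_k$, so that any projective isomorphism of two such canonical images is induced by an automorphism of the ambient toric surface. This is analogous to Petri-type statements that recover the canonical scroll (or a minimal containing surface) from the ideal of quadrics vanishing on $C$, and one expects it to hold on a dense open subset of $M_\Delta$; however, a careful argument will likely require case analysis by the combinatorial type of $\Delta^{(1)}$ (with the multiples of $\Sigma$ handled separately, as in Example~\ref{boundsharp}). Once this reconstruction is established, matching the two automorphism groups via the column-vector dictionary and applying Proposition~\ref{brunsgub} to obtain the exact dimension count is routine.
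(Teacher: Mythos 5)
You should know first that the paper does not actually prove this theorem: it is quoted from Koelman's thesis, with only the remark that Koelman works over $\C$ with a slightly larger moduli space and that his methods extend to arbitrary $\kbar$. The only self-contained argument the paper gives is for the special class of polytopes $\Delta=\Delta^{(1)(-1)}$ with $\Delta^{(1)}$ a trapezium of height one (the ``Trigonal curves'' subsection), where $X(\Delta^{(1)})_k$ is a rational normal scroll and Petri's theorem identifies it as the intersection of the quadrics through the canonical curve. Your upper-bound half is fine --- it is exactly Corollary~\ref{boundcolumn} combined with Proposition~\ref{brunsgub}, using finiteness of $\Aut(V)$ for $g\geq 2$ --- and your overall strategy for the lower bound is the same as the paper's strategy in that special case.

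The genuine gap is precisely the step you flag as the ``principal obstacle,'' and the tool you propose for it does not work in general. Petri's theorem recovers a \emph{surface} from the quadrics through the canonical curve only when the curve is trigonal or a smooth plane quintic; for every other nonhyperelliptic curve of genus $\geq 4$ the quadrics cut out the canonical curve itself, so there is no distinguished ``intersection of quadrics'' surface to transport under $\Psi$. For a general maximal nonhyperelliptic $\Delta$ (say the hexagon of Figure~\ref{fig2gplus2}(a), where $X(\Delta^{(1)})_k$ is a sextic del Pezzo surface in $\PP^6$, not a variety of minimal degree), one must show by some other means that $X(\Delta^{(1)})_k$ is intrinsic to the curve --- via its gonality pencils, adjoint systems, or a case analysis of the combinatorial type of $\Delta^{(1)}$. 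That reconstruction is the actual mathematical content of Koelman's theorem and occupies a substantial portion of his thesis; without it your argument establishes only the inequality $\dim\calM_\Delta\leq \#(\Delta\cap\Z^2)-1-\dim\Aut(X(\Delta)_k)$, not the claimed equality. (A secondary point to be careful about: identifying $\Aut(X(\Delta)_k)$ with $\Aut(X(\Delta^{(1)})_k)$ requires comparing the fans and polarizations of the two toric surfaces, which the paper verifies only in the scroll case; comparing $c(\Delta)$ with $c(\Delta^{(1)})$ is not automatic.)
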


In fact, Koelman assumes $k = \mathbb{C}$ and works with a slightly
bigger moduli space in which ours is dense. But his methods extend
to an arbitrary algebraically closed field $k=\kbar$. Our main result is
then the following.

\begin{thm} \label{lowerbound}
If $g \geq 4$, then $\dim \Mgnd = 2g+1$ except for $g \neq 7$ where
$\dim \calM_7^{\textup{nd}} = 16$.
\end{thm}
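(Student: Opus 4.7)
The plan is to match the upper bound from Theorem~\ref{reallymaintheorem} by exhibiting, for each $g \geq 4$, a maximal nonhyperelliptic lattice polytope $\Delta_g$ with exactly $g$ interior lattice points and with $m(\Delta_g) = 2g+1$ (respectively $m(\Delta_7) = 16$ when $g = 7$), and then invoking Koelman's Theorem~\ref{Koelmanmain}.

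For even $g \geq 4$ I would take the rectangle $\Delta_g = [0,\, g/2+1] \times [0,\, 3]$; for odd $g \geq 5$ with $g \neq 7$, I would set $b = (g-1)/2$ and take the trapezoid $\Delta_g = \conv\{(0,0),\, (b,0),\, (b+3,3),\, (0,3)\}$. In each case a direct count yields $\#(\Delta_g \cap \Z^2) = 2g+8$ and exactly $g$ interior lattice points arranged on two horizontal lines, so $\Delta_g$ is nonhyperelliptic; in each case $\Delta_g^{(1)}$ is itself a thin rectangle or trapezoid that relaxes back to $\Delta_g$, so $\Delta_g$ is maximal by Lemma~\ref{maximalitycriterion}. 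A facet-by-facet enumeration of column vectors shows that $c(\Delta_g) = 4$ in both families: namely, $\pm(1,0)$ and $\pm(0,1)$ for the rectangle, and $(0,1),(1,1)$ (base facet the top edge), $(-1,0)$ (base facet the left edge), $(1,0)$ (base facet the slanted right edge) for the trapezoid. Hence $m(\Delta_g) = \#(\Delta_g \cap \Z^2) - c(\Delta_g) - 3 = 2g+1$. For the exceptional case $g = 7$, I would take $\Delta_7$ to be polytope (a) from Figure~\ref{fig2gplus2}: it is maximal (being the relaxation of its interior hexagon), nonhyperelliptic, and by the computation in the proof of Proposition~\ref{maintheorem} satisfies $m(\Delta_7) = 2g + 2 = 16$.

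With these polytopes in hand, Koelman's Theorem~\ref{Koelmanmain} combined with Proposition~\ref{brunsgub} gives
\[ \dim \calM_{\Delta_g} = \#(\Delta_g \cap \Z^2) - 1 - \dim \Aut(X(\Delta_g)_k) = m(\Delta_g). \]
Since $\calM_{\Delta_g} \subset \Mgnd$, this lower bound meets the upper bound of Theorem~\ref{reallymaintheorem}, yielding equality.

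The main obstacle will be the combinatorial bookkeeping: the column-vector count is sensitive to the exact shape of $\Delta_g$ and must be verified facet by facet for both the rectangle and trapezoid families, with particular care that no unexpected column vectors appear on the slanted edge of the trapezoid. One must also check separately that the hexagonal polytope (a) admits no column vectors at all. A secondary concern is to confirm that Koelman's Theorem~\ref{Koelmanmain}, originally established over $\C$, genuinely extends to arbitrary algebraically closed $k$ as asserted in the paper.
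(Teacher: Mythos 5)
Your proposal is correct and follows essentially the same route as the paper: the same rectangle $[0,h+1]\times[0,3]$ for even $g$, an equivalent (reflected) trapezium for odd $g\neq 7$, the hexagon of Figure~\ref{fig2gplus2}(a) for $g=7$, and the same appeal to Koelman's Theorem~\ref{Koelmanmain} together with Proposition~\ref{brunsgub} to convert $m(\Delta)$ into $\dim\calM_\Delta$. The column-vector counts you flag as the main bookkeeping risk do check out ($c=4$ for both families, $c=0$ for the hexagon), matching the values asserted in the paper.
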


\begin{proof} It suffices to find for every genus $g
\geq 5$ a lattice polytope $\Delta$ with $g$ interior lattice
points, for which $\dim \mathcal{M}_\Delta = 2g + 1$ if $g \neq 7$,
and $\dim \mathcal{M}_\Delta = 16$ if $g = 7$. If $g = 2h$ is even,
let $\Delta$ be the rectangle
\begin{equation} \label{trigonaleven}
\conv\left(\{ (0,0),(0,3),(h+1,3),(h+1,0) \} \right). \end{equation}
Note that then $\#(\Delta \cap \Z^2) = 2g + 8$ and $c(\Delta) = 4$.
If $g=2h+1$ is odd but different from $7$, let $\Delta$ be the
trapezium \begin{equation} \label{trigonalodd}
\conv\left(\{(0,0),(0,3),(h,3),(h+3,0) \} \right). \end{equation}
Again, $\#(\Delta \cap \Z^2) = 2g + 8$ and $c(\Delta) = 4$. Finally,
if $g=7$ then let $\Delta$ be
\[ \conv \{(2,0),(0,2),(-2,2),(-2,0),(0,-2),(2,-2)\} \]
(i.e.\ the polytope given in Figure~\ref{fig2gplus2}(a)). Here,
$\#(\Delta \cap \Z^2) = 19$ and $c(\Delta) = 0$. In every case,
$\Delta$ is maximal and the result follows from Koelman's theorem,
when combined with Proposition~\ref{brunsgub}.
\end{proof}

\subsection*{Trigonal curves}
For a class of polytopes including (\ref{trigonaleven}) and
(\ref{trigonalodd}), Koelman's theorem can be proven in a more
elementary way, based on the well-known theory of trigonal curves
\cite{Coppens,Maroni}. For any $k, \ell \in \Z_{\geq 2}$ with $k \leq
\ell$, let $\Delta^{(1)}$ be the trapezium
\begin{center}
\begin{pspicture}(-0.5,-0.5)(4.5,1)
\pspolygon[fillstyle=solid,linecolor=black](0,0)(4,0)(2.5,0.5)(0,0.5)
\psline{->}(-0.5,0)(4.3,0) \psline{->}(0,-0.5)(0,1)
\psline[linestyle=dotted]{-}(2.5,0)(2.5,0.5)
\rput(-0.18,0.5){\small $1$} \rput(2.5,-0.25){\small $k$}
\rput(4,-0.25){\small $\ell$}
\end{pspicture}
\end{center}
and let $\Delta = \Delta^{(1)(-1)}$. Then if a curve $V$ is
$\Delta$-nondegenerate, it is trigonal of genus $g= k+\ell+2$. By
Proposition~\ref{internallatticegenus}, it can be canonically
embedded in $X(\Delta^{(1)})_k$, which is the rational surface
scroll $S_{k,\ell} \subset \mathbb{P}^{g-1}_k$. By Petri's theorem
\cite{ACGH}, this scroll is the intersection of all quadrics
containing the canonical embedding. As a consequence, two different
such canonical embeddings must differ by an automorphism of
$\Aut(\PP^{g-1}_k)$ that maps $X(\Delta^{(1)})_k$ to itself; in
other words, any two canonical embeddings of $V$ must differ by an
automorphism of $X(\Delta^{(1)})_k$.

Now let $f_1,f_2 \in k[x^\pm,y^\pm]$ be $\Delta$-nondegenerate
polynomials such that $V(f_1)$ and $V(f_2)$ are isomorphic as
abstract curves. Since the fans associated to $\Delta$ and
$\Delta^{(1)}$ are the same, we have $X(\Delta)_ k =
X(\Delta^{(1)})_k$. Under this identification, $V(f_1)$ and $V(f_2)$
become canonical curves that must differ by an automorphism of
$X(\Delta)_k$. This proves Koelman's theorem for this particular
class of polytopes, which suffices to deduce the lower bound $\dim
\Mgnd \geq 2g+1$.

We note that although any trigonal curve is canonically embedded in
some rational normal scroll $S_{k,\ell}$ and hence in some
$X(\Delta)_k$, it might fail to be nondegenerate because it can be
impossible to avoid tangency to $X(\Delta)_k \setminus \T^2_k$.

\section*{Acknowledgements}

Part of this paper was written while the first author was supported by the EPSRC grant EP/C014839/1.
He would also like to explicitly thank Marc Coppens for his
helpful comments and for referring us to the Ph.D. thesis of Koelman
\cite{Koelman}, about which we only found out during the final
research phase.  The second author would like to thank Bernd Sturmfels and Steven Sperber.

\section*{Appendix: lattice polytopes of genus one}

There are 16 equivalence classes of lattice polytopes having one
interior lattice point. Polytopes representing these are drawn
below. This is a copy of \cite[Figure 2]{PoonenRodriguez}, we
include the list here for sake of self-containedness. It is an
essential ingredient in the proofs of
Lemma~\ref{genus1nondegcriterion} and Proposition~\ref{maintheorem}.

\begin{center}
\psset{unit=0.92cm}
\ \\
\begin{pspicture}(0,0.51)(13,4.4)
\psgrid[gridcolor=lightgray,subgridcolor=lightgray,gridwidth=0.01,subgridwidth=0.01,gridlabels=0,subgriddiv=2]
\pspolygon[fillstyle=none,linecolor=black](0.5,3.5)(1,4)(1.5,3)
\pspolygon[fillstyle=none,linecolor=black](2,3)(3,3)(2.5,4)
\pspolygon[fillstyle=none,linecolor=black](3.5,3.5)(4,4)(4.5,3)(4,3)
\pspolygon[fillstyle=none,linecolor=black](5.5,3)(5,3.5)(5.5,4)(6,3.5)
\pspolygon[fillstyle=none,linecolor=black](6.5,3)(7.5,3)(7.5,3.5)(7,4)
\pspolygon[fillstyle=none,linecolor=black](8,3.5)(8.5,3)(9,3)(9,3.5)(8.5,4)
\pspolygon[fillstyle=none,linecolor=black](9.5,3)(11,3)(10,4)
\pspolygon[fillstyle=none,linecolor=black](11.5,3)(12.5,3)(12.5,4)(12,4)
\pspolygon[fillstyle=none,linecolor=black](0,1)(1,1)(1,1.5)(0.5,2)(0,1.5)
\pspolygon[fillstyle=none,linecolor=black](1.5,1.5)(1.5,2)(2,2)(2.5,1.5)(2.5,1)(2,1)
\pspolygon[fillstyle=none,linecolor=black](3,1)(4.5,1)(3.5,2)(3,1.5)
\pspolygon[fillstyle=none,linecolor=black](5,1)(6,1)(6,2)(5.5,2)(5,1.5)
\pspolygon[fillstyle=none,linecolor=black](6.5,1.5)(7.5,2.5)(7.5,0.5)
\pspolygon[fillstyle=none,linecolor=black](8,1)(9.5,1)(8.5,2)(8,2)
\pspolygon[fillstyle=none,linecolor=black](10,1)(11,1)(11,2)(10,2)
\pspolygon[fillstyle=none,linecolor=black](11.5,1)(13,1)(11.5,2.5)
\end{pspicture}
\end{center}

\end{document}